\documentclass[12pt]{amsart}
\usepackage{a4,amscd,amssymb,amsfonts, epsf,graphics,caption2}
\usepackage[section]{algorithm}
\usepackage{algorithmic}
\pdfoutput=1
\usepackage[pdftex]{graphicx}

\newcommand{\doo}{\partial}
\newcommand{\R}{{\mathbb R}}

\newcommand{\C}{{\mathbb C}}
\newcommand{\Om}{\Omega}
\newcommand{\ra}{\rightarrow}
\newcommand{\T}{{\mathbf{t}}}

\newcommand{\DOm}{\partial \Omega}

\newcommand{\Nmu}{m}

\newcommand{\Rdos}{\mathbb{R}^2}
\newcommand{\s}{\sigma}

\newcommand{\dbar}{\overline{\partial}}

\newcommand{\CGO}{{\sc cgo}}

\newcommand{\norm}[1]{\left\|#1\right\|}

\theoremstyle{plain}
\newtheorem{theorem}{Theorem}[section]

\theoremstyle{definition}
\newtheorem{definition}{Definition}[section]
\theoremstyle{notation}
\newtheorem*{notation}{Notation}
\theoremstyle{remark}

\newtheorem*{acknowledgements}{Acknowledgements}

\numberwithin{equation}{section}

\title[Nonlinear Fourier analysis]{Nonlinear Fourier analysis\\ for discontinuous conductivities:\\ computational results}
\author{K. Astala, L. P\"aiv\"arinta, J. M. Reyes and S. Siltanen}

\begin{document}

\maketitle

\tableofcontents

\begin{abstract}
Two reconstruction methods of Electrical Impedance Tomography (EIT) are numerically compared for nonsmooth conductivities in the plane based on the use of complex geometrical optics (CGO) solutions to D-bar equations involving the global uniqueness proofs for Calder\'on problem exposed in [Nachman; Annals of Mathematics \textbf{143}, 1996] and [Astala and P\"aiv\"arinta; Annals of Mathematics \textbf{163}, 2006]: the Astala-P\"aiv\"arinta theory-based {\em low-pass transport matrix method} implemented in [Astala et al.; Inverse Problems and Imaging \textbf{5}, 2011] and the {\em shortcut method} which considers ingredients of both theories. The latter method is formally similar to the Nachman theory-based regularized EIT reconstruction algorithm studied in [Knudsen, Lassas, Mueller and Siltanen; Inverse Problems and Imaging \textbf{3}, 2009] and several references from there.

New numerical results are presented using parallel computation with size parameters larger than ever, leading mainly to two conclusions as follows. First, both methods can approximate piecewise constant conductivities better and better as the cutoff frequency increases, and there seems to be a Gibbs-like phenomenon producing ringing artifacts. Second, the transport matrix method loses accuracy away from a (freely chosen) pivot point located outside of the object to be studied, whereas the shortcut method produces reconstructions with more uniform quality.
\end{abstract}

\emph{Keywords}: Inverse problem, Beltrami equation, Conductivity
equation, Inverse conductivity problem, Complex geometrical optics
solution, Nonlinear Fourier transform, Scattering transform,
Electrical impedance tomography.

\section{Introduction}

We study a widely applicable nonlinear Fourier transform in
dimension two. We perform numerical tests related to the nonlinear
Gibbs phenomenon with much larger cutoff frequencies than before.
Furthermore, we compare two computational inverse transformations,
called {\em low-pass transport matrix method} and {\em shortcut
method} in terms of accuracy.

The inverse conductivity problem of Calder\'on \cite{Calder'on1980} is the main source of applications of the nonlinear Fourier transform we consider. Let $\Om \subset\R^2$ be the unit disc and let $\sigma: \Om\to (0,\infty)$ be an essentially bounded measurable function satisfying $\sigma(x)\geq c>0$ for almost every $x\in \Om$. Let $u\in H^{1}(\Om)$ be the unique solution to the following elliptic Dirichlet problem:
\begin{eqnarray}
  \nabla\cdot\sigma\nabla u &=& 0 \text{ in }\Om, \label{Calderon:eq1} \\
  u\big|_{\doo\Om} &=& \phi\in H^{1/2}(\doo \Om).\label{Calderon:eq2}
\end{eqnarray}
The inverse conductivity problem consists on recovering $\sigma$  from the Dirichlet-to-Neumann (DN) map or voltage-to-current map defined by
$$
  \Lambda_{\sigma} : \phi \mapsto \sigma\frac{\doo u}{\doo \nu}\Big|_{\doo \Om}.
$$
Here $\nu$ is the unit outer normal to the boundary. Note that the map $\Lambda: \sigma \mapsto \Lambda_{\sigma}$ is nonlinear.

The inverse conductivity problem is related to many practical applications, including the medical imaging technique called
{\em electrical impedance tomography} (EIT). There one attaches electrodes to the skin of a patient, feeds electric currents into the body and
measures the resulting voltages at the electrodes. Repeating the measurement with several current patterns yields a current-to-voltage data matrix that can be
used to compute an approximation $\Lambda_{\sigma}^\delta$ to $\Lambda_{\sigma}$. Since different organs and tissues have different conductivities, recovering
$\sigma$ computationally from $\Lambda_{\sigma}^\delta$  amounts to creating an image of the inner structure of the patient.
See \cite{Mueller2012,Cheney1999} for more information on EIT and its applications.

Recovering $\sigma$ from $\Lambda_{\sigma}^\delta$ is a nonlinear and ill-posed inverse problem, whose computational solution requires regularization. Several categories of solution methods have been suggested and tested in the literature; in this work we focus on so-called D-bar methods based on complex geometrical optics (CGO) solutions. There are three main flavors of D-bar methods for EIT:
\begin{itemize}
\item Schr\"odinger equation approach for twice differentiable $\sigma$. Introduced by Nachman in 1996 \cite{Nachman1996}, implemented numerically in \cite{Siltanen2000,Mueller2003,Isaacson2004,Isaacson2006,Knudsen2009,Mueller2012}.
\item First-order system approach for once differentiable $\sigma$. Introduced by Brown and Uhlmann in 1997 \cite{Brown1997}, implemented numerically in \cite{Knudsen2003,Knudsen2004a,Hamilton2012}.
\item Beltrami equation approach assuming no smoothness ($\sigma\in L^\infty(\Omega)$). Introduced by Astala and P\"aiv\"arinta in 2006 \cite{Astala2006a}, implemented numerically in \cite{Astala2006,Astala2010,Astala2011,Mueller2012}. The assumption $\sigma\in L^\infty(\Omega)$ was the one originally used by Calder\'on in \cite{Calder'on1980}.
\end{itemize}

Using these approaches, a number of conditional stability results
have been studied for the Calder\'on problem. The most recent
results in the plane were obtained by Clop, Faraco and Ruiz in
\cite{Clop2010}, where stability in $L^2$-norm was proven for
conductivities on Lipschitz domains in the fractional Sobolev
spaces $W^{\alpha,p}$ with $\alpha>0$, $1<p<\infty$, and in
\cite{Faraco2013}, where the Lipschitz condition on the boundary
of the domain was removed. In dimension $d\geq 3$, conditional
stability in H\"older norm for just $C^{1+\varepsilon}$
conductivities on bounded Lipschitz domains was proved by Caro,
Garc\'ia and the third author in \cite{Caro2013} using the method
presented in \cite{Haberman2013}.

The three aforementioned D-bar methods for EIT in the two-dimensional case are based on the use of nonlinear Fourier transforms specially adapted to the inverse conductivity problem. Schematically, the idea looks like this:

\begin{picture}(320,155)
\put(17,15){\Large$\sigma(z)$}
\put(220,15){\Large$\sigma(z)$}
\put(110,125){\Large $\tau(k)$}
\put(20,73){\Large$\Lambda_\sigma$}
\thicklines
\put(35,33){\vector(1,1){84}}
\put(59,47){\rotatebox{45}{\footnotesize Transform}}
\put(35,85){\vector(2,1){70}}
\put(140,117){\vector(1,-1){88}}
\put(164,97){\rotatebox{-45}{\footnotesize Inverse transform}}
\put(27,33){\vector(0,1){34}}
\put(0,50){\footnotesize Data}
\thinlines
  \multiput(0,107)(4,0){97}{\line(1,0){2}}
  \multiput(0,39)(4,0){97}{\line(1,0){2}}
\put(220,112){\footnotesize Nonlinear frequency domain ($k$-plane)}
\put(278,27){\footnotesize Spatial domain ($z$-plane)}
\end{picture}

\noindent The main point above is that the nonlinear Fourier transform can be calculated from the infinite-precision data $\Lambda_\sigma$, typically via solving a second-kind Fredholm boundary integral equation for the traces of the CGO solutions on $\partial\Omega$.

In practice one is not given the infinite-precision data $\Lambda_\sigma$, but rather the noisy and finite-dimensional approximation $\Lambda_\sigma^\delta$. Typically all we know about $\Lambda_\sigma^\delta$ is that $\|\Lambda_\sigma-\Lambda_\sigma^\delta\|_Y<\delta$ for some (known) noise level $\delta>0$ measured in an appropriate norm $\|\,\cdot\,\|_Y$. Most CGO-based EIT methods need to be regularized by a truncation $|k|<R$ in the nonlinear frequency-domain as illustrated in  Figure \ref{fig:scheme}.

\begin{figure}

\begin{picture}(320,250)
\put(-30,2){\includegraphics[height=2.5cm]{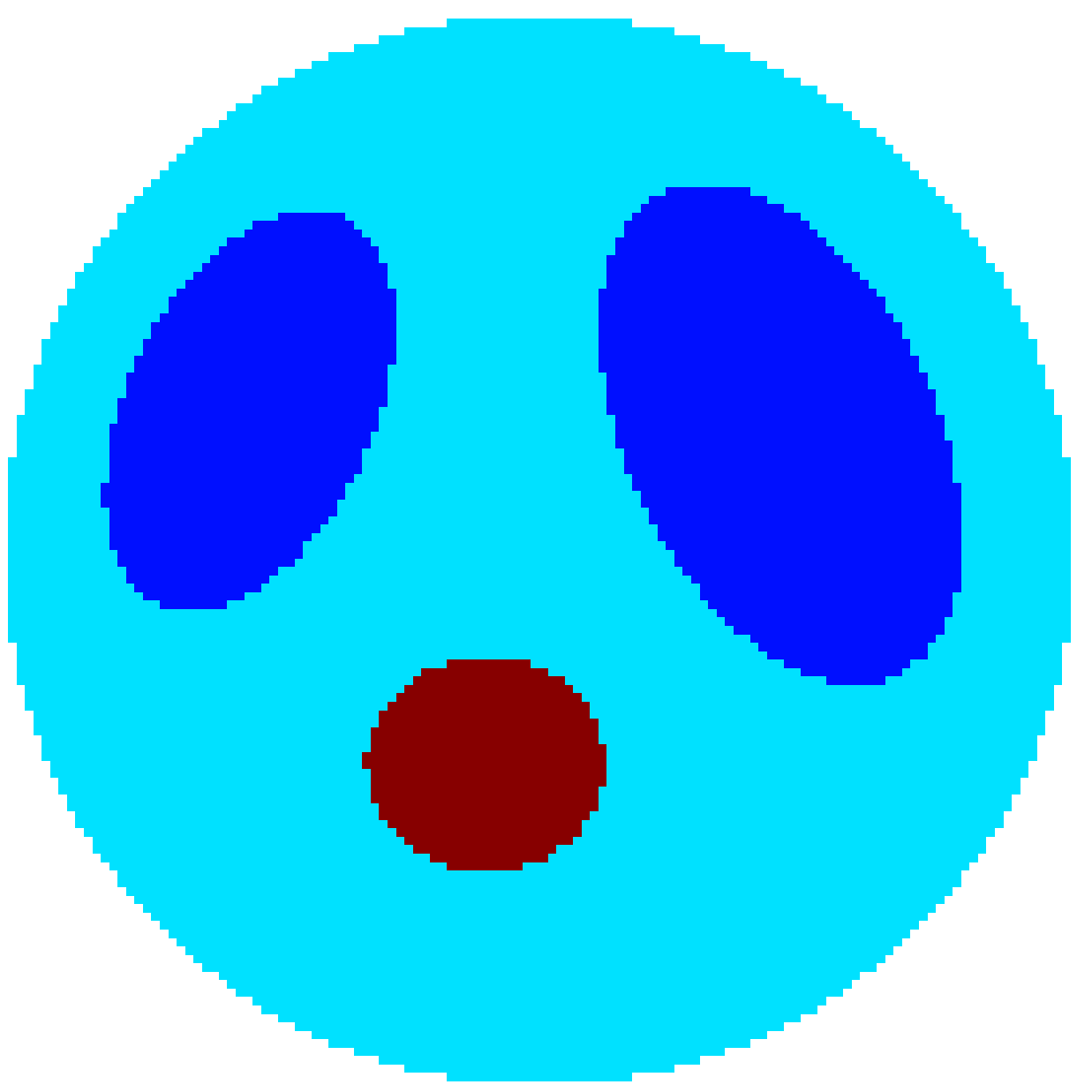}}
\put(273,2){\includegraphics[height=2.5cm]{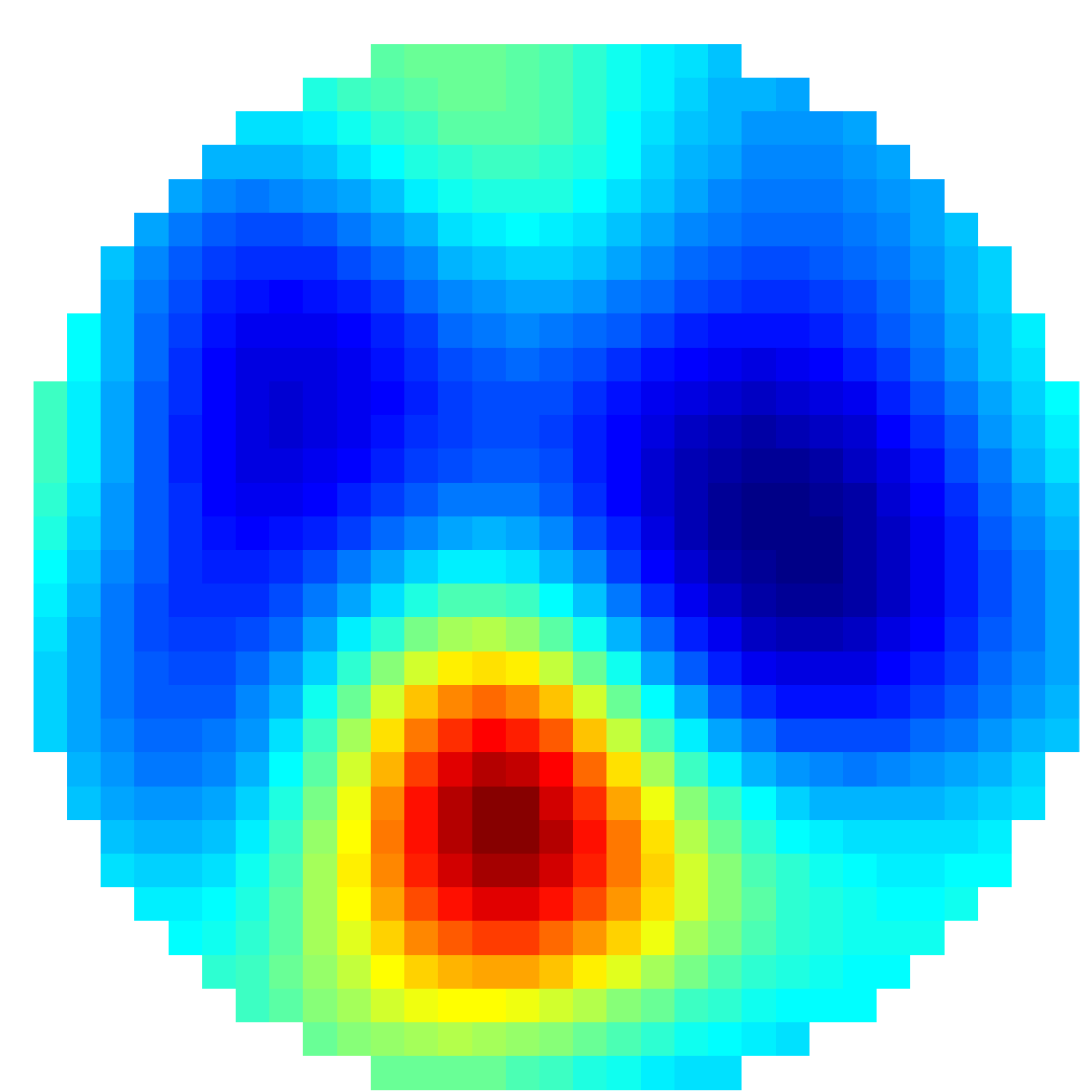}}
\put(140,2){\includegraphics[height=2.5cm]{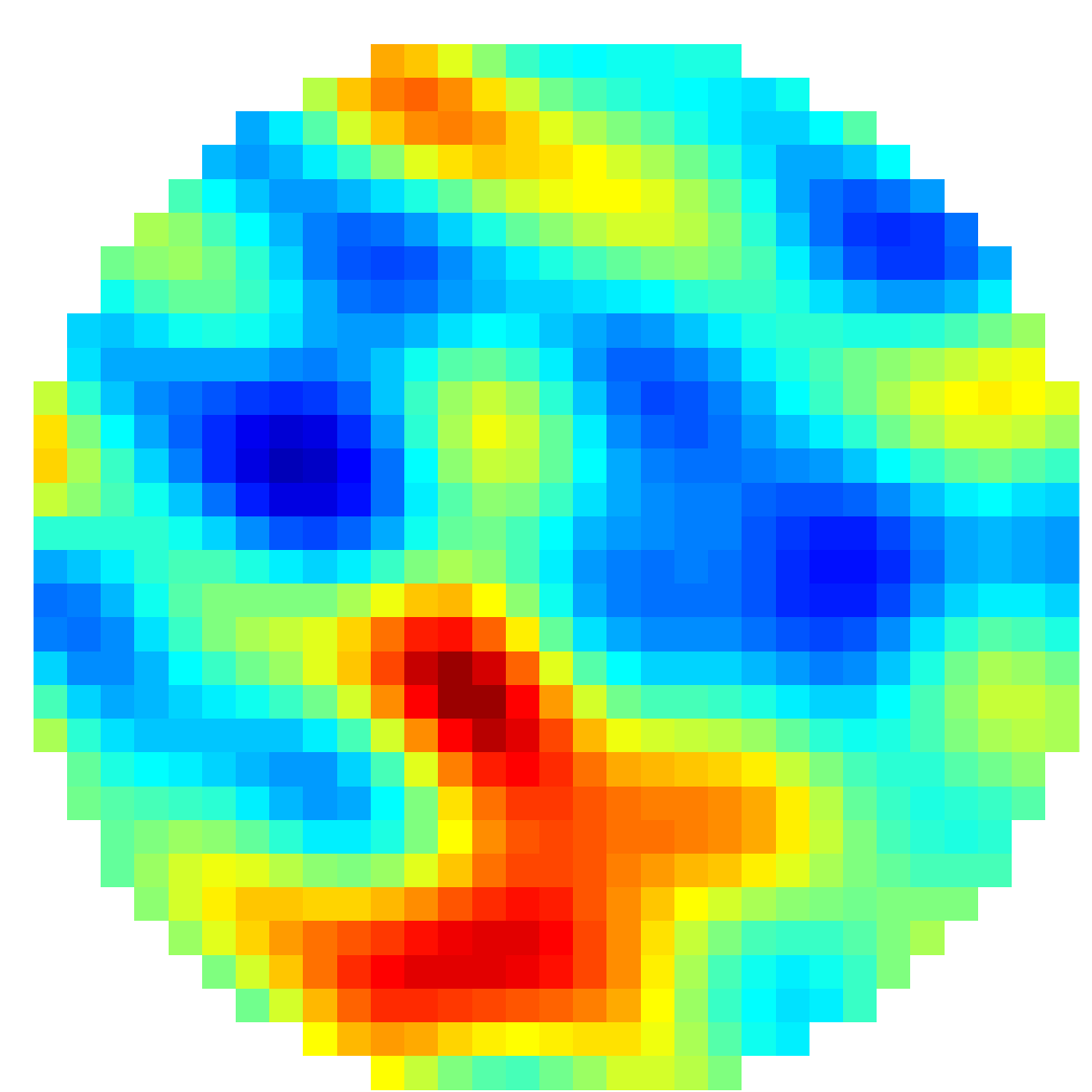}}
\put(132,155){\includegraphics[height=3cm]{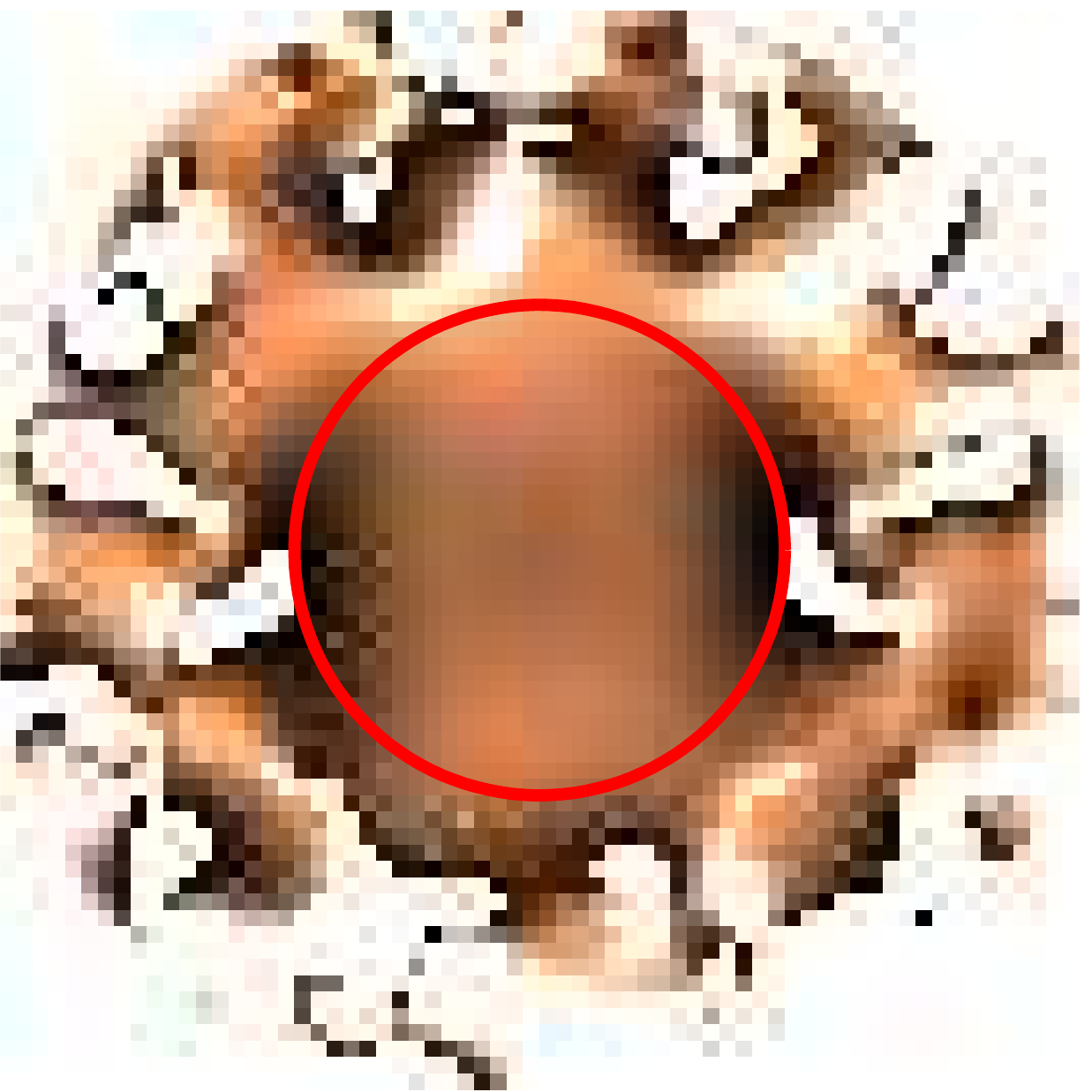}}
\put(265,155){\includegraphics[height=3cm]{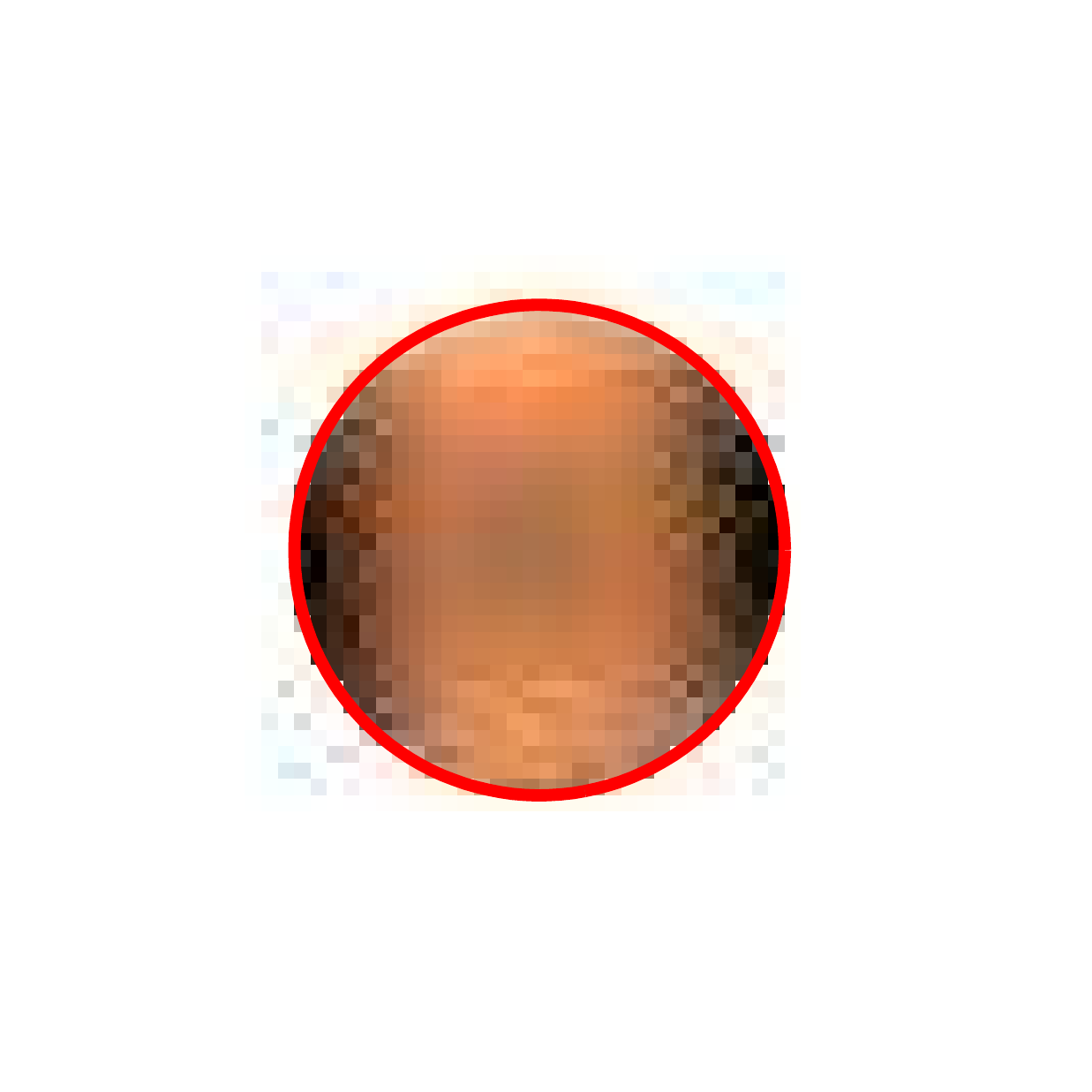}}
\put(-10,110){\includegraphics[height=1cm]{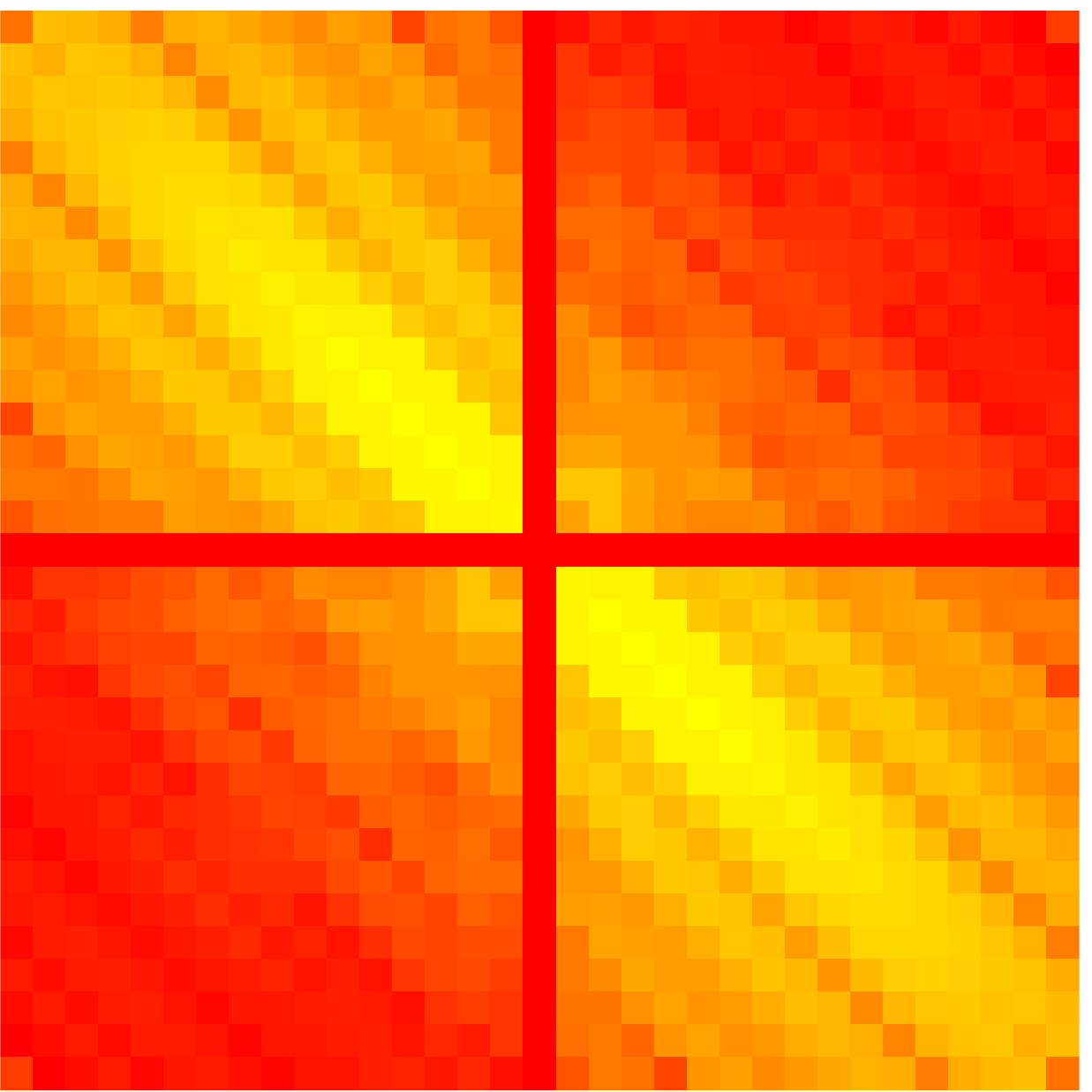}} \thicklines
\linethickness{.1mm} \put(308,168){\vector(0,-1){93}}
\put(312,140){\rotatebox{-90}{\footnotesize Inverse}}
\put(299,140){\rotatebox{-90}{\footnotesize transform}}
\put(175,155){\vector(0,-1){80}}
\put(179,140){\rotatebox{-90}{\footnotesize Inverse}}
\put(166,140){\rotatebox{-90}{\footnotesize transform}}
\put(5,75){\vector(0,1){28}} \put(25,140){\vector(3,1){100}}
\put(-27,120){$\Lambda_\sigma^\delta$}
\put(225,198){\vector(1,0){60}} \put(228,202){\footnotesize
Lowpass} \put(228,186){\footnotesize $|k|<4$} \thinlines
\multiput(-30,147)(4,0){97}{\line(1,0){2}}
\multiput(-30,85)(4,0){97}{\line(1,0){2}}
\put(228,73){\footnotesize $z$-plane} \put(228,152){\footnotesize
$k$-plane} \put(32,0){$\sigma(z)$} \put(207,0){(a)}
\put(340,0){(b)}
\end{picture}
\caption{\label{fig:scheme}Schematic illustration of the nonlinear low-pass filtering approach to regularized EIT. This image corresponds to the so-called {\em shortcut method} defined below. The simulated heart-and-lungs phantom $\sigma(z)$ (bottom left) gives rise to a finite voltage-to-current matrix $\Lambda_{\sigma}^\delta$ (orange square), which can be used to determine the nonlinear Fourier transform. Measurement noise causes numerical instabilities in the transform (irregular white patches), leading to a bad reconstruction (a). However, multiplying the transform by the characteristic function of the disc $|k|<4$ yields a lowpass-filtered transform, which in turn gives a noise-robust approximate reconstruction (b).}
\end{figure}

The regularization step results in a smooth reconstruction. This
smoothing property of the nonlinear low-pass filter resembles
qualitatively the effect of linear low-pass filtering of images.
In particular, the smaller $\mbox{R}$ is, the blurrier the
reconstruction becomes.

The cut-off frequency $\mbox{R}$ is determined by the noise amplitude $\delta$, and typically $\mbox{R}$ cannot exceed $7$ in practical situations. However, it is interesting to understand how the conductivity is represented by its nonlinear Fourier transform. To study this numerically, we compute nonlinear Fourier transforms in large discs such as $|k|<60$ and compute low-pass filtered reconstructions using inverse nonlinear Fourier transform and observe the results.

\begin{figure}[!htp]
\begin{picture}(250,130)
\put(-40,5){\includegraphics[width=12cm]{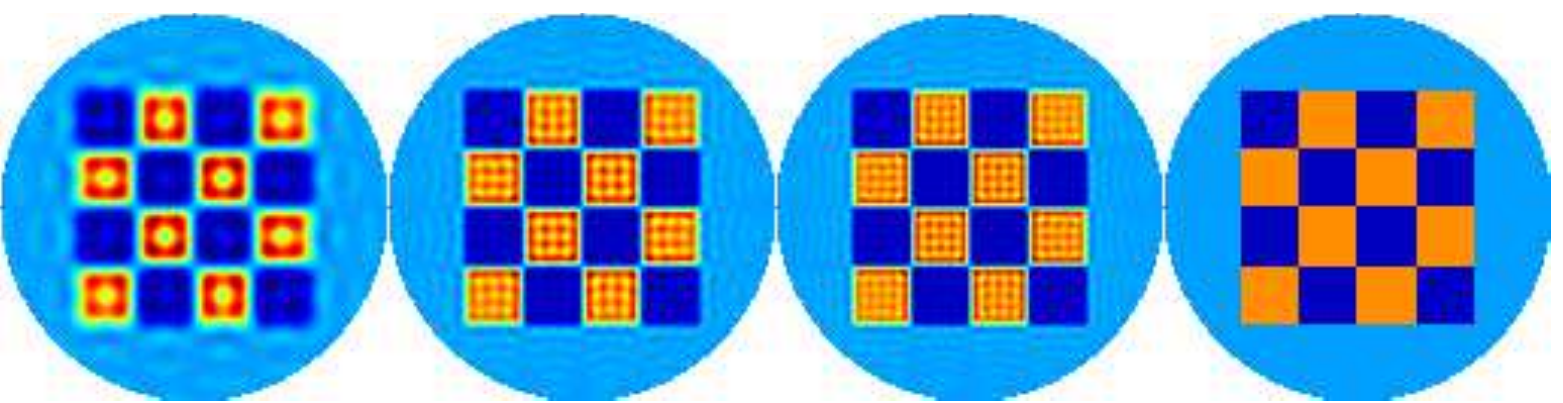}}
 \put(-20,100){\large $\text{R} = 20$}
 \put(65,100){\large $\text{R} = 40$}
 \put(150,100){\large $\text{R} = 50$}
 \put(240,100){\large True}
\end{picture}
\caption{\label{fig:nice_PWC_sigma2_INTRO}Convergence of
nonlinearly low-pass filtered conductivity to the true
discontinuous conductivity. Note the ringing artefacts reminiscent
of the Gibbs effect of linear low-pass filtering. This picture
illustrates the so-called shortcut method, and it is the
first-ever computation of this kind with cutoff frequencies larger
than 20.}
\end{figure}

We compare two reconstruction methods based on the use of the CGO solutions (\ref{CGOsol}): the {\em low-pass transport matrix method} implemented in \cite{Astala2011}, and a {\em shortcut method} based on solving a D-bar equation. The latter method does not have rigorous analysis available yet, but it is formally similar to the regularized EIT reconstruction algorithm studied in \cite{Nachman1996,Siltanen2000,Mueller2003,Isaacson2004,Isaacson2006,Knudsen2007,Knudsen2009}.

Our new computational findings can be roughly summarized by the following two points. First, both methods can approximate piecewise constant conductivities better and better as the cutoff frequency $\mbox{R}$ increases, and there seems to be a Gibbs-like phenomenon producing ringing artifacts. Second, the transport matrix method loses accuracy away from a (freely chosen) pivot point located outside of $\overline{\Omega}$, whereas the shortcut method produces reconstructions with more uniform quality.

Figure \ref{fig:nice_PWC_sigma2_INTRO} shows some of our numerical
results via the {\it shortcut method} for a nonsymmetric
conductivity distribution.

The rotationally symmetric examples presented in Section
\ref{section:symm_examples} provide numerical evidence of the fact
that discontinuous conductivities can be reconstructed with the
shortcut method more and more accurately when $\mbox{R}$ tends to
infinity.

In the computational experiments we show in this paper, the truncated scattering transform is computed with a very large
cutoff frequency through the Beltrami equation solver. In practice, it is not possible to obtain such scattering data from
boundary measurements unless we had unrealistically high-precision EIT measurements available.

Notwithstanding the aforesaid, our new results are relevant for applications at least in these two ways:

\begin{enumerate}
\item We provide strong new evidence suggesting that the shortcut method can recover organ boundaries at the low-noise limit. Even with practical noise levels we can recover low-pass filtered jumps in the cases of nested inclusions or checkerboard-type discontinuity curves. While there are many dedicated methods for inclusion detection in EIT, for example \cite{Friedman1987,Bruhl2000,Ikehata2000a,Bruhl2001,Ikehata2004,Hyvonen2007,Ide2007,Lechleiter2008a,Hanke2008,Gebauer2008,Uhlmann2008,Harrach2009,Ide2010,Harrach2013}, none of them can locate nested inclusions.

\item The nonlinear Fourier transform studied here can be used to solve the nonlinear Noviko-Veselov (NV) equation. It is a nonlinear evolution equation generalizing the celebrated Korteweg-de Vries (KdV) equation into dimension (2+1). There has been significant recent progress in linearizing the NV equation using inverse scattering methods, see \cite{Lassas2012a,Lassas2012b,Perry2012,Perry2013,Croke2014}. In the potential applications of the NV equation there should be no restriction to as low frequencies as in EIT.
\end{enumerate}

We remark that all our numerical experiments have been computed using either a 192 Gigabyte computer with Linux or parallel computation provided by Techila.\footnote {Techila Technologies Ltd is a privately held Finnish provider of High Performance Computing middleware solutions.}

\section{The nonlinear Fourier transform}

In dimension two Calder\'on's problem \cite{Calder'on1980} was solved using complex geometrical optics (\CGO) solutions by Astala and P\"aiv\"arinta  \cite{Astala2006a}.
In the case of $L^\infty$-conductivities $\sigma$ the \CGO \  solutions need to be constructed via  the Beltrami equation
\begin{equation}\label{CGOsol}
  \dbar_z f_\mu = \mu\,\overline{\partial_z f_\mu}, \quad  \text{with } f_\mu(z,k) = e^{ikz}(1+\omega(z,k)),
\end{equation}
where
\begin{equation*}
  \mu := \frac{1-\sigma}{1+\sigma},\qquad \omega(z,k)={\mathcal O}\bigg(\frac{1}{z}\bigg) \mbox{ as }|z|\to \infty.
\end{equation*}
Here $z=z_1+iz_2\in\C$, $\dbar_z=(\partial/\partial z_1 + i\partial/\partial z_2)/2$ and $k$ is a complex parameter.

More specifically, for $\sigma\in L^{\infty}(\C)$ with $c^{-1}\leq \sigma(z)\leq c$ almost every $z\in\C$ and $\s\equiv 1$ outside the unit disc, and for $\kappa<1$ with $|\mu(z)|\leq\kappa\,$ a.e. $z\in\C$, Theorem 4.2 in \cite{Astala2006a} establishes that for any $2< p<1+1/\kappa$ and complex parameter $k$, there exists a unique solution $f_{\mu}(\cdot,k)$ in the Sobolev space $W^{1,p}_{\text{loc}}(\C)$ such that
\begin{equation}\label{form:beltrami}
\dbar_z f_{\mu} (z,k) = \mu(z)\, \overline{\partial_z f_{\mu}
(z,k) },\qquad\text{for }z\in\C,
\end{equation}
holds with the asymptotic formula
\begin{equation}\label{form:normcond}
f_{\mu} (z,k) = e^{ikz} \bigg(1+\mathcal{O}\bigg({1\over
z}\bigg)\bigg)\qquad\text{as }|z|\rightarrow\infty .
\end{equation}
In addition, $f_{\mu}(z,0)\equiv 1$.

Write $f_{-\mu}$ for the solution to the Beltrami equation with coefficient $-\mu$. The scattering transform $\tau$ of Astala
and P\"aiv\"arinta (AP) is given by the formula
\begin{equation}\label{scat_AP}
  \overline{\tau(k)} = \frac{1}{2\pi}\int_\C \dbar_z (\omega(z,k) - \omega^-(z,k)) \,dz_1dz_2,
\end{equation}
where $\omega(z,k) := e^{-ikz} f_{\mu}(z,k)-1$, $\omega^-(z,k) := e^{-ikz} f_{-\mu}(z,k)-1$. It is known that $|\tau(k)|\leq 1$ for all $k\in\C$.

Defining the functions as follows for $k,z\in\C$:
\begin{eqnarray}
  \label{form:hplus}h_+(z,k) &:=& 1/2\, (f_\mu(z,k) + f_{-\mu}(z,k)),\\
  \label{form:hminus}h_-(z,k) &:=& i/2 \, (\,\overline{f_\mu(z,k)} \, - \,\overline{f_{-\mu}(z,k)}\, ),
\end{eqnarray}
and
\begin{eqnarray*}
  u_1(z,k) &:=& h_+(z,k) -i \, h_-(z,k) = \mbox{Re} f_{\mu}(z,k) + i \, \mbox{Im} f_{-\mu}(z,k),\\
  u_2(z,k) &:=& i \, (h_+(z,k) + i\, h_-(z,k)) =  -\mbox{Im} f_{\mu}(z,k) + i \, \mbox{Re} f_{-\mu}(z,k),
\end{eqnarray*}
it turns out that $u_1$, $u_2$ are the unique solutions to the following elliptic problems:
\begin{eqnarray}
  \label{form:condeq1}\nabla\cdot\sigma\nabla u_1 &=& 0, \qquad u_1(z,k) =  e^{ikz}\big(1+\mathcal{O}\big(1/ z\big)\big),\\
  \label{form:condeq2}\nabla\cdot\sigma^{-1}\nabla u_2 &=& 0, \qquad u_2(z,k) = e^{ikz}\big(i + \mathcal{O}\big(1 / z\big)\big),
\end{eqnarray}
as $|z|\rightarrow\infty$.

The complex parameter $k$ above can be understood as a nonlinear frequency-domain variable. As well as the function $\tau(k)$, defined in \eqref{scat_AP}, plays the role of the nonlinear Fourier transform in the {\it shortcut method} described below, it is played by the following function
\[
\nu_{z_0}(k) := i\frac{h_-(z_0,k)}{h_+(z_0,k)},
\]
in the {\it low-pass transport matrix method} presented in Section \ref{section:APmethod}, where $z_0$ is a pivot point outside $\overline{\Omega}$.

A practical algorithm was introduced in \cite{Astala2006,Astala2006a,Astala2011} for recovering $\sigma$ approximately from a noisy measurement of $\Lambda_\sigma$. This algorithm involves a truncation operation (nonlinear low-pass filtering) in the frequency-domain.

The scattering transform used in \cite{Nachman1996} is defined  as
\begin{equation}\label{form:Nscat}
\T(k) = \int_{\R^2} e^{i(kz+\overline{k}\,\overline{z})}q(z) m(z)\,dz,
\end{equation}
where $q = \sigma^{-1/2}\Delta \sigma^{1/2}$ and $\sigma$ is a strictly positive-real valued function in $W^{2,p}(\R^2)$ for some $p>1$ with $\sigma\equiv 1$ on the unit disc near the boundary and outside the unit disc, and $m$ solves the equation
$$
m = 1-g_k\ast (qm).
$$
Here $\ast$ denotes convolution over the plane and $g_k$ the Faddeev fundamental solution of  $-\Delta-4ik\dbar$. For smooth enough $\sigma$ one can prove that
\begin{equation}\label{transform_comparison}
  \T(k) = - 4 \pi i \,{\overline k} \,\tau(k).
\end{equation}

In the rest of the paper, for non-smooth conductivities $\T$ denotes the function defined through formula \eqref{transform_comparison}. Why are $\T$ and $\tau$ called nonlinear Fourier transforms? There is admittedly some abuse of terminology involved, but the main reason is this. If one replaces $m$ in \eqref{form:Nscat} with $1$, the result is the linear Fourier transform of $q$ at $(-2k_1,2k_2)\in\R^2$. But $m$ is only asymptotically close to $1$ and actually depends on $q$.

The following theorem holds:

\begin{theorem}\label{thrm:rot_sym}
Assume $\sigma\in L^{\infty}(\R^2)$ is real-valued, radially
symmetric, bounded away from zero and $\sigma\equiv 1$ outside the
unit disc. Then the function $\T$ is real-valued and rotationally
symmetric.
\end{theorem}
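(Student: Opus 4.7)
My plan is to lift two symmetries of the Beltrami coefficient $\mu = (1-\sigma)/(1+\sigma)$ -- rotational covariance and a conjugation rule, coming respectively from radial symmetry and reality of $\sigma$ -- to the CGO solutions $f_{\pm\mu}$ using the uniqueness part of Theorem 4.2 in \cite{Astala2006a}, push them through \eqref{scat_AP} to $\tau$, and then transfer them to $\T = -4\pi i\,\bar k\,\tau$. For the rotational step I fix $\theta\in\R$ and set $g(z) := f_\mu(e^{-i\theta}z, e^{i\theta}k)$. The Wirtinger chain rule gives $\dbar_z g = e^{i\theta}(\dbar_w f_\mu)$ and $\partial_z g = e^{-i\theta}(\partial_w f_\mu)$ evaluated at $(e^{-i\theta}z, e^{i\theta}k)$; combined with \eqref{form:beltrami} and the radial identity $\mu(e^{-i\theta}z) = \mu(z)$ this shows $\dbar_z g = \mu(z)\overline{\partial_z g}$, while the asymptotic \eqref{form:normcond} survives because $(e^{i\theta}k)(e^{-i\theta}z) = kz$. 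Hence $g = f_\mu(\cdot,k)$ by uniqueness, and similarly for $f_{-\mu}$, so $\omega$ and $\omega^-$ satisfy $\omega(e^{-i\theta}z, e^{i\theta}k) = \omega(z,k)$. Substituting into \eqref{scat_AP} and changing variables $w = e^{-i\theta}z$ (planar Lebesgue measure is rotation-invariant) gives $\tau(e^{i\theta}k) = e^{i\theta}\tau(k)$, whence $\T(e^{i\theta}k) = -4\pi i\,e^{-i\theta}\bar k\cdot e^{i\theta}\tau(k) = \T(k)$.

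For reality, I exploit $\overline\mu = \mu$ and set $\tilde f(z,k) := \overline{f_\mu(\bar z, -\bar k)}$. The Wirtinger identities $\partial_z[F(\bar z)] = (\dbar_w F)(\bar z)$ and $\dbar_z[F(\bar z)] = (\partial_w F)(\bar z)$, together with $\mu(\bar z) = \mu(z)$ and $\overline\mu = \mu$, show that $\tilde f$ again solves \eqref{form:beltrami}; a direct conjugation gives the asymptotic $e^{ikz}(1 + \bigO(1/z))$. Uniqueness then yields $f_\mu(z,k) = \overline{f_\mu(\bar z, -\bar k)}$ (and the same for $f_{-\mu}$), so that $\omega(z,k) = \overline{\omega(\bar z, -\bar k)}$ and likewise for $\omega^-$. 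Passing this through \eqref{scat_AP} via $\dbar_z\overline G = \overline{\partial_z G}$ and the reflection $w = \bar z$ inside the integral gives $\overline{\tau(k)} = \tau(-\bar k)$. Specialising to $k\in\R$ and combining with $\tau(-k) = -\tau(k)$ (the $\theta = \pi$ case of the rotational identity) forces $\tau(k)\in i\R$, so $\T(k) = -4\pi i k\,\tau(k)\in\R$ for every $k\in\R$. The rotational invariance established above then extends real-valuedness to all of $\C$, the case $k=0$ being trivial since $\T(0)=0$.

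The principal obstacle is the Wirtinger chain-rule bookkeeping in the conjugation step, where stacked conjugations and reflections make it easy to misplace a sign or to swap $\dbar$ and $\partial$; the two auxiliary identities above must be verified cleanly before the Beltrami equation for $\tilde f$ can be read off. A subsidiary concern -- interchanging derivative and integral and changing variables in \eqref{scat_AP} -- is benign here because $\mu\equiv 0$ off $\overline{\D}$, so
\[
\dbar_z(\omega - \omega^-)(\cdot,k) = e^{-ikz}\mu\bigl(\overline{\partial_z f_\mu} + \overline{\partial_z f_{-\mu}}\bigr)
\]
has compact support and every manipulation takes place on a bounded set.
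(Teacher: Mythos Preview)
Your proof is correct and follows essentially the same route as the paper's: both lift the rotational and conjugation symmetries of $\mu$ to $f_{\pm\mu}$ via the uniqueness of \eqref{form:beltrami}--\eqref{form:normcond}, obtaining $f_\mu(z,\eta k)=f_\mu(\eta z,k)$ and $f_\mu(z,k)=\overline{f_\mu(\bar z,-\bar k)}$, then push these through \eqref{scat_AP} to get $\tau(\eta k)=\eta\,\tau(k)$ and $\tau(k)=\overline{\tau(-\bar k)}$. The only cosmetic difference is that the paper phrases the reality step as the Hermitian identity $\T(k)=\overline{\T(\bar k)}$ directly, while you instead specialise to $k\in\R$, combine with $\tau(-k)=-\tau(k)$ to get $\tau(k)\in i\R$, and then invoke rotational invariance; these are immediately equivalent.
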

\begin{proof}
The rotational symmetry of $\T$ follows from the property $\T(k) =
\T (\eta k)$ for any $\eta, k\in\C$ with $|\eta|=1$, which is
tantamount to $\tau(k) = \overline{\eta}\, \tau(\eta k)$. By
uniqueness of the asymptotic boundary value problem
\eqref{form:beltrami}-\eqref{form:normcond}, one can check that
$f_{\mu}(z,\eta k) = f_{\mu}(\eta z, k)$. Therefore,
$\omega(z,\eta k) = \omega(\eta z, k)$. For $-\mu$ the same
equalities follow. Using definition \eqref{scat_AP} we deduce
$\tau(k) = \overline{\eta}\, \tau(\eta k)$.

Thanks to $\T(k) = \T (\eta k)$, real-valuedness of $\T$ follows
from $\T(k) = \overline{\T ( \overline{k})}$ and this fact is
equivalent to seing $\tau(k) = \overline{\tau(-\overline{k})}$ due
to the identity $\tau(\,\overline{k}\,) =
\overline{\eta}\,\tau(\eta\,\overline{k}\,)$ with $\eta = -1$.
Again, by uniqueness of
\eqref{form:beltrami}-\eqref{form:normcond}, it is straightforward
to prove
\[
f_{\mu}(z,k) = \overline{f_{\mu}(\overline{z},-\overline{k})},
\]
and $\omega(z,k) = \overline{\omega(\overline{z} ,
-\overline{k})}$. Attaining the same identity for $-\mu$ and by
\eqref{scat_AP} we deduce $\tau(k) =
\overline{\tau(-\overline{k})}$.
\end{proof}

\section{Inverse transforms and reconstruction algorithms}

\subsection{The low-pass transport matrix method}\label{section:APmethod}

Given noisy EIT data $\Lambda_\sigma^\delta$, we can evaluate the
traces of $M_{\mu}(\,\cdot\,,k)=1+\omega(\,\cdot\,,k)$ at the
boundary $\DOm$ for $|k|<R$ with some $\mbox{R}>0$ depending on
the noise level $\delta$. This is achieved by the numerical method
described in Section 4.1 of \cite{Astala2011} based on solving the
boundary integral equations studied in \cite{Astala2006}.

Once $M_{\mu}$ is known on the boundary, so is $f_{\mu}(\,\cdot\,,k)|_{\DOm}$, which can be used to expand $f_\mu$ as a power series outside $\Om$. Choose a point $z_0\in\R^2\setminus\overline{\Om}$.
Set
\begin{equation}\label{lowpass}
  \nu^{(R)}_{z_0}(k) :=
\left\{\begin{array}{cl}
 \displaystyle{i\frac{h_-(z_0,k)}{h_+(z_0,k)}} & \mbox{ for }|k|<R,\\\\
 0 & \mbox{ for }|k|\geq R.
\end{array}\right.
\end{equation}

We next solve the truncated Beltrami equations
\begin{eqnarray}
  \label{RatruncBeltrami}
  \dbar_k\alpha^{(R)} &=& \nu^{(R)}_{z_0}(k)\,\overline{\partial_k \alpha^{(R)}},\\
  \label{RbtruncBeltrami}
  \dbar_k\beta^{(R)} &=& \nu^{(R)}_{z_0}(k)\,\overline{\partial_k \beta^{(R)}},
\end{eqnarray}
with solutions represented in the form
\begin{eqnarray}
  \label{Ratrunc_asymp}
  \alpha^{(R)}(z,z_0,k) &=& \exp(ik(z-z_0)+\varepsilon(k)),\\
  \label{Rbtrunc_asymp}
  \beta^{(R)}(z,z_0,k) &=& i\exp(ik(z-z_0)+ \widetilde{\varepsilon}(k)),
\end{eqnarray}
where $\varepsilon(k)/k\ra 0$ and  $\widetilde{\varepsilon}(k)/k \ra 0$ as $k\ra \infty$.
Requiring
\begin{equation}\label{normalization}
\alpha^{(R)}(z,z_0,0) = 1 \quad \mbox{and} \quad \beta^{(R)}(z,z_0,0) = i
\end{equation}
fixes the solutions uniquely.

To compute $\alpha^{(R)}$ we first find solutions $\eta_1$ and $\eta_2$  to the equation
\begin{equation}
\label{uus1}
\dbar_k \eta_j =  \nu^{(R)}_{z_0}(k) \,  \overline{\partial_k \eta_j},
\end{equation}
with asymptotics
\begin{eqnarray}
  \eta_1(k)
  &=& \label{uus2}
  e^{ik(z-z_0)}\bigl(1+{\mathcal O}(1/k)\bigr), \\
  \eta_2(k)
  &=&  \label{uus3}
  i \,e^{ik(z-z_0)}\bigl(1+{\mathcal O}(1/k)\bigr),
\end{eqnarray}
respectively, as $ |k|\to \infty$. There are constants $A, B \in \R$ such that
$
  A\, \eta_1(0) + B\,  \eta_2(0) = 1.
$
We now set
$
  \alpha^{(R)}(z,z_0,k) =  A\, \eta_1(k) + B\,  \eta_2(k).
$
Then $\alpha^{(R)}(z,z_0,k)$ satisfies the equation \eqref{RatruncBeltrami} and the appropriate asymptotic conditions.

Let $\alpha^{(R)}_{-\mu}$ denote the solution to \eqref{RatruncBeltrami} with the condition \eqref{Ratrunc_asymp} so that the coefficient $\nu^{(R)}_{z_0}$ is computed interchanging the roles of $\mu$ and $-\mu$. Then $\beta^{(R)}_{\mu} = i \, \alpha^{(R)}_{-\mu}$.
For computational construction of the frequency-domain {\sc cgo} solutions $\eta_j$ satisfying (\ref{uus1}--\ref{uus3}), see \cite[Section 5.2]{Astala2011} and \cite{Huhtanen2012}.

Fix any nonzero $k_0\in\C$  and choose any point $z$ inside the unit disc. Denote  $\alpha^{(R)} = a^{(R)}_1+ia^{(R)}_2$ and $\beta^{(R)} =  b^{(R)}_1+ib^{(R)}_2$. We can now use the truncated transport matrix
\begin{equation}\label{trunctrans}
T^{(R)} = T^{(R)}_{z,z_0,k_0} := \left(\begin{array}{ccc}  a^{(R)}_1 &  a^{(R)}_2  \\  b^{(R)}_1 &  b^{(R)}_2 \\ \end{array}\right)
\end{equation}
to compute
\begin{eqnarray}\label{transtrunc}
u^{(R)}_1(z,k_0) &=&  a^{(R)}_1 u_1(z_0,k_0) +a^{(R)}_2 u_2(z_0,k_0),\\
u^{(R)}_2(z,k_0) &=& b^{(R)}_1 u_1(z_0,k_0) +b^{(R)}_2 u_2(z_0,k_0). \nonumber
\end{eqnarray}

We know the approximate solutions $u^{(R)}_j(z,k_0)$ for $z\in\Om$ and one fixed $k_0$. We use formulas $u_1^{(R)} = h_+^{(R)} - ih_-^{(R)}, \,\, u_2^{(R)} = i(h_+^{(R)} + ih_-^{(R)})$ and \eqref{form:hplus}-\eqref{form:hminus} (with $z\in\Om$) to connect $u^{(R)}_1,u^{(R)}_2$ with $f^{(R)}_\mu,f^{(R)}_{-\mu}$. Define
\begin{equation}\label{finalstep1}
  \mu^{(R)}(z) = \frac{\dbar f^{(R)}_\mu(z,k_0)}{\overline{\partial f^{(R)}_\mu}(z,k_0)}.
\end{equation}

Finally we reconstruct the conductivity $\sigma$ approximately as
\begin{equation}\label{finalstep2}
  \sigma^{(R)} = \frac{1-\mu^{(R)}}{1+\mu^{(R)}}.
\end{equation}

\subsection{The shortcut method}\label{sec:shortcutm}

Fix $\mbox{R}>0$ corresponding to a certain noise level $\delta$ in the EIT data. For all $|k|<R$, we compute the traces of $M_{\mu}(\cdot , k)$ and $M_{-\mu}(\cdot , k)$ by solving the boundary integral equation aforementioned in Section \ref{section:APmethod}. The analyticity of $M_{\mu}(\cdot , k)$, $M_{-\mu}(\cdot , k)$ outside the unit disc allows to develop these factors as follows
\[
M_{\mu}(z , k) = 1 + {a_1^+ (k)\over z} + {a_2^+ (k)\over z^2} + \ldots, \quad M_{-\mu}(z , k) = 1 + {a_1^-(k)\over z} + {a_2^-(k)\over z^2} + \ldots,
\]
for $|z|>1$.

In Section 5 of \cite{Astala2006a} it is proved that the scattering transform $\tau$ satisfies
\[
\tau(k) = {1\over 2} \,(\,\overline{a_1^+(k)}\, -\, \overline{a_1^-(k)}\,)\, , \qquad k\in \C.
\]
Notice that this formula is consistent with \eqref{scat_AP}. Let $\tau^{\delta}_R(k)$ be a numerical approximation to $\tau(k)$ such that $\tau^{\delta}_R(k) = 0$ for $|k|> R$.


For all $z\in\Omega$, we solve the D-bar equation
\begin{equation}\label{form: Dbar}
  \dbar_k \Nmu^\delta_R(z,k) = -i\,\tau^\delta_R(k) e_{-z}(k)\,\overline{\Nmu^\delta_R(z,k)}
\end{equation}
with $\Nmu^\delta_R(z,\,\cdot\,)-1\in L^r(\R^2)\cap L^\infty(\R^2)$, for some
$r>2$.


Finally, we reconstruct $\sigma$ approximately as $\sigma(z)\approx  (\Nmu^\delta_R(z,0))^2$. This method is proven to work for $C^2$ conductivities $\sigma$ by computing the approximation $\tau^\delta_R (k)$ in a different way, namely the methods described in \cite{Nachman1996,Siltanen2000,Mueller2003,Isaacson2004,Isaacson2006,Knudsen2007,Knudsen2009}. Nevertheless, there is no theoretical understanding of what happens when $\sigma$ is nonsmooth and only essentially bounded.

\subsection{Periodization of the nonlinear inverse transform}\label{sec:periodic}

The algorithm we use to solve equation \eqref{form: Dbar} is
presented in \cite{Knudsen2004} and is a modification of the
method introduced by Vainikko in \cite{Vainikko2000} for the
Lippmann-Schwinger equation related to the Helmholtz equation.
Such algorithm is based in reducing the equation
\begin{equation}\label{form:16} m_R(z,k) = 1 + {1\over\pi}\,\int_{B_R}
{F(z,k')\, \overline{m_R(z,k')}\over k-k'}\, dk',
\end{equation}
to a periodic one. Let us see how this reduction is done.

Here $F(z,k) := -i\,\tau(k) e_{-k}(z)$ and $e_k(z) :=
e^{ikz+i\overline{k}\overline{z}}$.

Take $\epsilon >0$ and set $s = 2R + 3\epsilon$. Define $Q = [-s,s)^2$. Choose an infinitely smooth cutoff function $\eta \in C^{\infty}_0 (\Rdos)$ verifying
$$
\eta (k) = \left\{
\begin{array}{ll}
1 & \text{for }|k| < 2R +\epsilon ,\\
0 & \text{for }|k|\geq 2R + 2\epsilon,\\
\end{array}
\right.
$$
and $0\leq\eta(k)\leq 1$ for all $k\in\C$.

\begin{definition}
We say a function $f$ defined on the plane is $2s$-periodic if $f(k) = f(k + 2s(j_1 + i j_2))$ for any $k\in\C$, $j_1$, $j_2\in\mathbb{Z}$. That is to say, we mean that $f$ is $2s$-periodic in each coordinate.
\end{definition}

\begin{notation}
$L^p(Q)$ denotes the space of $2s$-periodic functions in $L^p_{\text{loc}}(\Rdos)$ for $1\leq p\leq \infty$. Note that $L^{\infty}(Q) \subset L^{\infty}(\Rdos)$.

For any measurable subset $A$ of $\Rdos$ we denote by $\chi_A$ both the characteristic function of $A$ and the multiplier operator with such function in $k$.

We write $\mathcal{E}_0$ to denote the extension by zero outside the torus $Q$ defined as
$$
\mathcal{E}_0 f (z,k) = \left\{
\begin{array}{ll}
f(z,k), & \text{if }k\in Q,\\
0, &\text{if }k\in\C\setminus Q,
\end{array}
\right.
$$
for any $f(z,\cdot)\in L^1(Q)$.

\end{notation}

Define a $2s$-periodic approximation to the principal value $1/(\pi k)$ by setting $\widetilde{\beta} (k) = \eta(k)/(\pi k)$ for $k\in Q$ and extending periodically:
$$
\widetilde{\beta} (k + 2j_1 s + i 2 j_2 s ) = {\eta(k)\over \pi k }\qquad\text{for }k\in Q,\, j_1,j_2\in \mathbb{Z}.
$$

Define a periodic approximate solid Cauchy transform in $k$ by
$$
\widetilde{\mathcal{C}}_0 f (z,k) = (\widetilde{\beta} \,\widetilde{\ast} f) (z,k) = \int_Q \widetilde{\beta}(k-k') f(z,k')\, dk',
$$
where $\widetilde{\ast}$ denotes convolution on the torus.

We shall use another periodization of the Cauchy transform given by
$$
\widetilde{\mathcal{C}} f(z,k) = \mathcal{E} \chi_{\mathbb{D}_{R+\epsilon}} \widetilde{\mathcal{C}}_0 f(z,k) ,
$$
where $\mathcal{E}$ denotes the periodic extension operator defined as
$$
\mathcal{E} f (z,k + 2s (j_1 + i j_2)) = f(z,k), \qquad\text{for any }k\in Q,\, j_1,\, j_2\in \mathbb{Z}.
$$

Define the multiplication operator $F_R$ by $F_R f (z,k) = \chi_{B_R}(k) F(z,k) f(z,k)$ if $k\in\C$ and its periodization as
$$
\widetilde{F}_R f (z,k + 2j_1 s + i\, 2 j_2 s) = F_R f(z,k),\qquad\text{for }j_1,j_2\in\mathbb{Z}, k\in Q.
$$

Let us denote the complex conjugation operator by $\textbf{c}$, i.e. $\textbf{c} (f) = \overline{f}$. We periodize the complex conjugation operator $\widetilde{\textbf{c}}$ in the same way.

We shall need some boundedness properties of the so-called Cauchy transform. The Cauchy transform, we denote it by $\dbar^{\,-1}$, is defined as the following weakly singular integral operator:
\begin{equation*}
\dbar^{\,-1} f (k) = {1\over \pi}\, \int_{\Rdos} {f(k')\over k-k'}\, dk_1' dk_2' \, ,
\end{equation*}
where $k' = k_1' + i k_2'$ and  $dk_1' dk_2'$ denotes the Lebesgue measure on $\Rdos$.

Theorem 4.3.8 in \cite{Astala2009} states that
\begin{align}\label{form:cauchyqq*}
\norm{\dbar^{\,-1}\phi}_{L^{q^{\ast}}(\C)}\leq {C\over (q-1)(2-q)}\, \norm{\phi}_{L^q(\C)}
\end{align}
for some constant $C$, $1<q<2$ and $q^{\ast} = 2q/(2-q)$.

For any $1<q<\infty$ and $1/q + 1/q' = 1$ the space $L^q(\C)\cap L^{q'}(\C)$ with the norm $\norm{\cdot}_{L^q} + \norm{\cdot}_{L^{q'}}$ is a Banach space. Theorem 4.3.11 in \cite{Astala2009} claims the following:

\begin{theorem}\label{theo:boundcauchy}
When $1<q<2$ the Cauchy transform maps continuously the space $L^q(\C)\cap L^{q'}(\C)$ into the space of continuous functions on the plane vanishing at infinity with the sup norm and satisfies the estimate
\begin{equation*}
\norm{\dbar^{\,-1}\phi}_{L^{\infty}(\C)}\leq (2-q)^{-1/2} \big( \norm{\phi}_{L^q(\C)} + \norm{\phi}_{L^{q'}(\C)} \big) .
\end{equation*}
\end{theorem}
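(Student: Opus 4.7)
\textbf{Proof plan for Theorem \ref{theo:boundcauchy}.}

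The plan is to prove the pointwise estimate first and then upgrade to continuity and decay at infinity by a density argument. For the pointwise bound, fix $k\in\C$ and a splitting radius $r>0$ (to be chosen), and write
\[
|\dbar^{\,-1}\phi(k)|\le \frac{1}{\pi}\int_{|k-k'|<r}\frac{|\phi(k')|}{|k-k'|}\,dk'_1dk'_2 + \frac{1}{\pi}\int_{|k-k'|\ge r}\frac{|\phi(k')|}{|k-k'|}\,dk'_1dk'_2 =: \frac{1}{\pi}\bigl(A(r)+B(r)\bigr).
\]
On the near piece the kernel $|w|^{-1}$ lies in $L^{q}_{\mathrm{loc}}$ (because $q<2$), so H\"older's inequality with exponents $(q,q')$ gives
\[
A(r)\le \bigl(\textstyle\int_{|w|<r}|w|^{-q}\,dw\bigr)^{1/q}\|\phi\|_{L^{q'}}=\bigl(2\pi r^{2-q}/(2-q)\bigr)^{1/q}\|\phi\|_{L^{q'}}.
\]
On the far piece $|w|^{-1}\in L^{q'}(\{|w|\ge r\})$ (because $q'>2$), so the dual H\"older pairing yields
\[
B(r)\le\bigl(2\pi r^{2-q'}/(q'-2)\bigr)^{1/q'}\|\phi\|_{L^{q}}=\bigl(2\pi(q-1)r^{2-q'}/(2-q)\bigr)^{1/q'}\|\phi\|_{L^{q}},
\]
using $q'-2=(2-q)/(q-1)$.

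The key algebraic observation is that the two exponents of $r$ are opposite: $(2-q)/q$ and $-(q'-2)/q'=-(2-q)/q$. Setting $\alpha=(2-q)/q$ and writing $A(r)=a_1r^{\alpha}\|\phi\|_{L^{q'}}$, $B(r)=a_2r^{-\alpha}\|\phi\|_{L^{q}}$, the minimum over $r>0$ is achieved when $a_1r^{2\alpha}\|\phi\|_{L^{q'}}=a_2\|\phi\|_{L^{q}}$, and equals $2\sqrt{a_1a_2\|\phi\|_{L^{q}}\|\phi\|_{L^{q'}}}$. A direct computation gives
\[
a_1a_2=(2\pi)^{1/q+1/q'}(q-1)^{1/q'}(2-q)^{-(1/q+1/q')}=\frac{2\pi(q-1)^{1/q'}}{2-q}.
\]
Combining this with $\sqrt{\|\phi\|_{L^{q}}\|\phi\|_{L^{q'}}}\le\tfrac{1}{2}\bigl(\|\phi\|_{L^{q}}+\|\phi\|_{L^{q'}}\bigr)$ and using $(q-1)^{1/(2q')}\le 1$ and $\sqrt{2/\pi}<1$, one obtains
\[
|\dbar^{\,-1}\phi(k)|\le \frac{1}{\pi}\cdot\sqrt{a_1a_2}\,\bigl(\|\phi\|_{L^{q}}+\|\phi\|_{L^{q'}}\bigr)\le(2-q)^{-1/2}\bigl(\|\phi\|_{L^{q}(\C)}+\|\phi\|_{L^{q'}(\C)}\bigr),
\]
which is the claimed inequality (uniformly in $k$).

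To promote the estimate to continuity and vanishing at infinity, invoke density of $C_c(\C)$ in $L^q\cap L^{q'}$ endowed with the norm $\|\cdot\|_{L^q}+\|\cdot\|_{L^{q'}}$. For $\phi\in C_c(\C)$ with $\supp\phi\subset B_M$, dominated convergence together with the local integrability of $1/|w|$ in $\Rdos$ shows that $\dbar^{\,-1}\phi$ is continuous, and the crude bound $|\dbar^{\,-1}\phi(k)|\le\|\phi\|_{L^\infty}\pi^{-1}\int_{B_M}|k-k'|^{-1}dk'\lesssim (|k|-M)^{-1}$ for $|k|>M$ shows that it vanishes at infinity. For general $\phi\in L^q\cap L^{q'}$, pick $\phi_n\in C_c(\C)$ with $\phi_n\to\phi$ in both $L^q$ and $L^{q'}$; by the just-proved uniform estimate applied to $\phi-\phi_n$, the sequence $\dbar^{\,-1}\phi_n$ converges uniformly on $\C$ to $\dbar^{\,-1}\phi$, so the latter lies in $C_0(\C)$.

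The only mildly subtle point is the optimization step: choosing $r$ by hand risks giving an exponent $(2-q)^{-1/q}$ or $(2-q)^{-1/q'}$ in place of the sharp $(2-q)^{-1/2}$. The exponent-matching observation $(q'-2)/q'=(2-q)/q$, followed by AM--GM applied to the balanced minimum, is what delivers the square-root exponent and keeps the absolute constant bounded (in fact below $1$) for all $q\in(1,2)$.
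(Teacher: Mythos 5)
Your proof is correct: the near/far splitting of the Cauchy kernel at radius $r$, H\"older's inequality with exponents $(q,q')$ on each piece, the cancellation of exponents $(2-q)/q = (q'-2)/q'$, and the optimization in $r$ followed by AM--GM yield the constant $\sqrt{2/\pi}\,(q-1)^{1/(2q')}(2-q)^{-1/2}\le (2-q)^{-1/2}$, and the density of $C_c(\C)$ in $L^q(\C)\cap L^{q'}(\C)$ together with the uniform estimate applied to $\phi-\phi_n$ legitimately upgrades the pointwise bound to membership in $C_0(\C)$. Note, however, that the paper gives no proof of this statement at all: it is quoted as Theorem 4.3.11 of \cite{Astala2009}, so there is no internal argument to compare against; your derivation is essentially the standard split-and-optimize argument of that reference, reconstructed self-containedly and with the constant verified to fall below $(2-q)^{-1/2}$.
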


It is easy to see that if $1<q_1<2<q_2<\infty$ but not necessarily $q_1^{-1}+q_2^{-1}=1$ then
\begin{equation}\label{form:boundcauchy}
\norm{\dbar^{\,-1}\phi}_{L^{\infty}(\C)}\lesssim \big( \norm{\phi}_{L^{q_1}(\C)} + \norm{\phi}_{L^{q_2}(\C)} \big),
\end{equation}
and $\dbar^{\,-1}\phi$ is a continuous function assuming the right hand side bounded, where the implicit constant just depends on $q_1$, $q_2$.

We conclude this section with the Theorem \ref{lema:per}. Its
proof uses the aforementioned properties of the Cauchy transform
and is quite similar to the arguments presented in Sections 14.3.3
and 15.4.1 of the book \cite{Mueller2012}. Some of the techniques
for Theorem 4.1 in \cite{Nachman1996} are involved.

\begin{theorem}\label{lema:per} Assume $c^{-1}\leq \sigma\leq c$ a.e. in the plane and $\sigma\equiv 1$ outside the unit disc. Let $z\in\C$. There exists a unique $2s$-periodic (in $k$) solution $\widetilde{m}_R(z,k)$ to
\begin{equation}\label{form:per}
\widetilde{m}_R(z,k) = 1 + \widetilde{\mathcal{C}} \widetilde{F}_R \widetilde{\textbf{c}}\, \widetilde{m}_R(z,k),\qquad \text{a.e.}\,\,k\in Q,
\end{equation}
with $\widetilde{m}_R(z,\cdot)\in L^{r}(Q)$ for some $2<r<\infty$.

Furthermore, the solutions of \eqref{form:16} and \eqref{form:per} agree on the disc of the $k$-plane with radius $R$: $m_R(z,k) = \widetilde{m}_R(z,k)$ for $|k|<R$.

\end{theorem}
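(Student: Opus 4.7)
My strategy is to recast \eqref{form:per} as the fixed-point equation $(I-T)\widetilde{m}_R=1$ on $L^r(Q)$, with $T:=\widetilde{\mathcal C}\,\widetilde{F}_R\,\widetilde{\textbf c}$, and apply the Fredholm alternative to the real-linear realization of $T$ (note that $T$ is only $\R$-linear because of the conjugation). Boundedness of $T$ on $L^r(Q)$ is routine: $\widetilde{\textbf c}$ is an isometry, $\widetilde F_R$ is multiplication by the bounded function $\chi_{B_R}F$ (with $|F|=|\tau|\leq 1$), and $\widetilde{\mathcal C}_0$ is torus-convolution with the kernel $\widetilde\beta$, which lies in $L^p(Q)$ for every $1\leq p<2$; so Young's inequality yields $\widetilde{\mathcal C}\colon L^r(Q)\to L^r(Q)$ boundedly. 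For compactness of $T$, the key observation is that $\widetilde F_R$ truncates every input to $B_R$ within each period, and the Cauchy transform of a compactly supported $L^r$ function is continuous and vanishes at infinity by \eqref{form:boundcauchy} (and in fact enjoys Hölder-type regularity); thus $T$ lands in a space of uniformly equicontinuous functions on $Q$, which embeds compactly in $L^r(Q)$. The Fredholm alternative then reduces existence to injectivity of $I-T$.

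The heart of the argument is that injectivity. The geometric backbone is the following: for $k\in B_{R+\epsilon}$ and $k'\in B_R$ one has $k-k'\in B_{2R+\epsilon}\subset Q$ and $\eta(k-k')=1$, so $\widetilde\beta(k-k')=1/(\pi(k-k'))$. Hence, for any $u\in L^r(Q)$,
\begin{equation*}
\widetilde{\mathcal C}_0(\widetilde F_R u)(z,k)=\frac{1}{\pi}\int_{B_R}\frac{F(z,k')\,u(z,k')}{k-k'}\,dk'=\dbar^{\,-1}(F_R u)(z,k),\qquad k\in B_{R+\epsilon},
\end{equation*}
i.e.\ on $B_{R+\epsilon}$ the periodic Cauchy transform of a function supported in $B_R$ coincides with the standard one. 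Suppose now $h\in L^r(Q)$ satisfies $h=Th$. The factor $\chi_{B_{R+\epsilon}}$ inside $\widetilde{\mathcal C}$ already forces $h\equiv 0$ on $Q\setminus B_{R+\epsilon}$, while on $B_{R+\epsilon}$ the identity above gives $h=g$, where $g(z,k):=\dbar^{\,-1}(F_R\bar h)(z,k)$ is the global Cauchy transform of a function supported in $B_R$. By \eqref{form:boundcauchy}, $g$ is continuous on $\C$, vanishes at infinity, and is holomorphic on $\C\setminus\overline{B_R}$. Because $h=g$ on $B_R\subset B_{R+\epsilon}$, one checks $\dbar_k g=F_R\bar h=F\bar g$ on $B_R$ and $\dbar_k g=0=F\bar g$ on $\C\setminus\overline{B_R}$, so $g$ solves the Beltrami-type equation $\dbar_k g=F\bar g$ globally on $\C$ with $F\in L^\infty_{\mathrm c}(\C)$. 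Vekua's similarity principle (cf.\ \cite{Astala2009}) then writes $g=e^{\phi}G$ with $G$ entire and $\phi$ bounded and continuous; since $g$ vanishes at infinity so does $G$, hence $G\equiv 0$ and $g\equiv 0$. Combined with the vanishing of $h$ on $Q\setminus B_{R+\epsilon}$, this gives $h\equiv 0$ on $Q$, which by Fredholm also produces existence.

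For the coincidence $m_R=\widetilde{m}_R$ on $|k|<R$, the same identification yields $\widetilde m_R(z,k)=1+\dbar^{\,-1}(F_R\overline{\widetilde m_R})(z,k)$ for $k\in B_{R+\epsilon}$; the function $m_R$ satisfies the same identity on all of $\C$ by \eqref{form:16}, so the difference $w:=m_R-\widetilde m_R$ on $B_R$ extends, via its own global Cauchy transform, to a solution of $\dbar w=F\bar w$ on $\C$ vanishing at infinity, which the Vekua argument forces to be identically zero. The delicate point in the whole proof is the uniqueness step: $h$ is a priori only in $L^r$ with a built-in discontinuity across $\partial B_{R+\epsilon}$, and one must recognise that its natural extension to the plane is not the zero extension but the global Cauchy transform $g$, whose holomorphy outside $\overline{B_R}$ unlocks the similarity principle.
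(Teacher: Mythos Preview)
Your plan is correct and follows exactly the route the paper indicates: the paper does not give a detailed proof but defers to Sections~14.3.3 and~15.4.1 of \cite{Mueller2012} together with Nachman's Theorem~4.1 in \cite{Nachman1996}, and that is precisely the Fredholm-alternative-plus-Liouville strategy you sketch (identify the periodic and planar Cauchy transforms on $B_{R+\epsilon}$, extend a homogeneous solution globally via $\dbar^{\,-1}$, and kill it with the similarity principle). Two small corrections that do not affect the overall argument: after multiplication by $\chi_{B_{R+\epsilon}}$ the image of $T$ is \emph{not} equicontinuous on all of $Q$ (there is a jump across $\partial B_{R+\epsilon}$), but compactness in $L^r(Q)$ still follows because the image is uniformly bounded, uniformly H\"older on $\overline{B_{R+\epsilon}}$, and identically zero on $Q\setminus B_{R+\epsilon}$; and in the line ``$\dbar_k g=0=F\bar g$ on $\C\setminus\overline{B_R}$'' the coefficient must be $F_R=\chi_{B_R}F$ rather than $F$, since $\tau$ is not compactly supported---your own phrase ``with $F\in L^\infty_{\mathrm c}(\C)$'' shows you already intend the truncated coefficient.
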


\section{Computational results}

Once scattering data with big cutoff frequencies are generated, we test the nonlinear inversion procedure consisting of computing the approximate conductivity from the truncated scattering transform by solving
\begin{equation}\label{form:17aa}
\dbar_k \, m_R(z,k) = \chi_{B_R}(k) (-i) \tau (k) e_{-k}(z) \,\overline{m_R(z,k)},
\end{equation}
with
\begin{equation}\label{form:18aa}
m_R(z,\cdot) -1\in L^{\infty}(\Rdos)\cap L^{r_0}(\Rdos),\qquad \text{for some }r_0\in (2,\infty).
\end{equation}

The unique solution $m_R(z,k)$ to \eqref{form:17aa} with the
normalization condition \eqref{form:18aa} can be obtained by
solving \eqref{form:16}. Finally, the approximate reconstruction
$\widetilde{\sigma}_R(z)$ is obtained by doing
$\widetilde{\sigma}_R(z):= m_R(z,0)^2$ for $|z|<1$.

\subsection{Nonlinear Gibbs phenomenon in radial cases}\label{section:symm_examples}

\noindent
In this section we test the shortcut method for two radially symmetric conductivities as follows: Firstly, we solve numerically the Beltrami equation and obtain reliable scattering data over a mesh for a real interval of the form $[0,\mbox{R}]$ with $\mbox{R}>0$. Secondly, from such scattering data the approximate reconstruction is computed by solving numerically the equation \eqref{form:per} on a mesh for the real interval $[0,1]$. It suffices to confine such computations to real positive numbers since the results on the whole disc can be obtained by simple rotation with respect to the origin by symmetry. The pictures for such reconstructions corresponding to the largest cutoff frequencies exhibit certain artifacts near the jump discontinuities of the actual conductivity quite similar to the Gibbs phenomenon for the truncated linear Fourier transform.

\subsubsection{Example $\sigma_1$}

We consider the rotationally symmetric conductivity $\sigma_1$ on the plane defined as $\sigma_1(z) = 2$ if $|z|<0.5$ and $\sigma_1(z) = 1$ if $|z|\geq 0.5$. Figure \ref{fig:sigma1} shows its representation.

\begin{figure}[!hbp]
\includegraphics[keepaspectratio=true, height = 5cm]{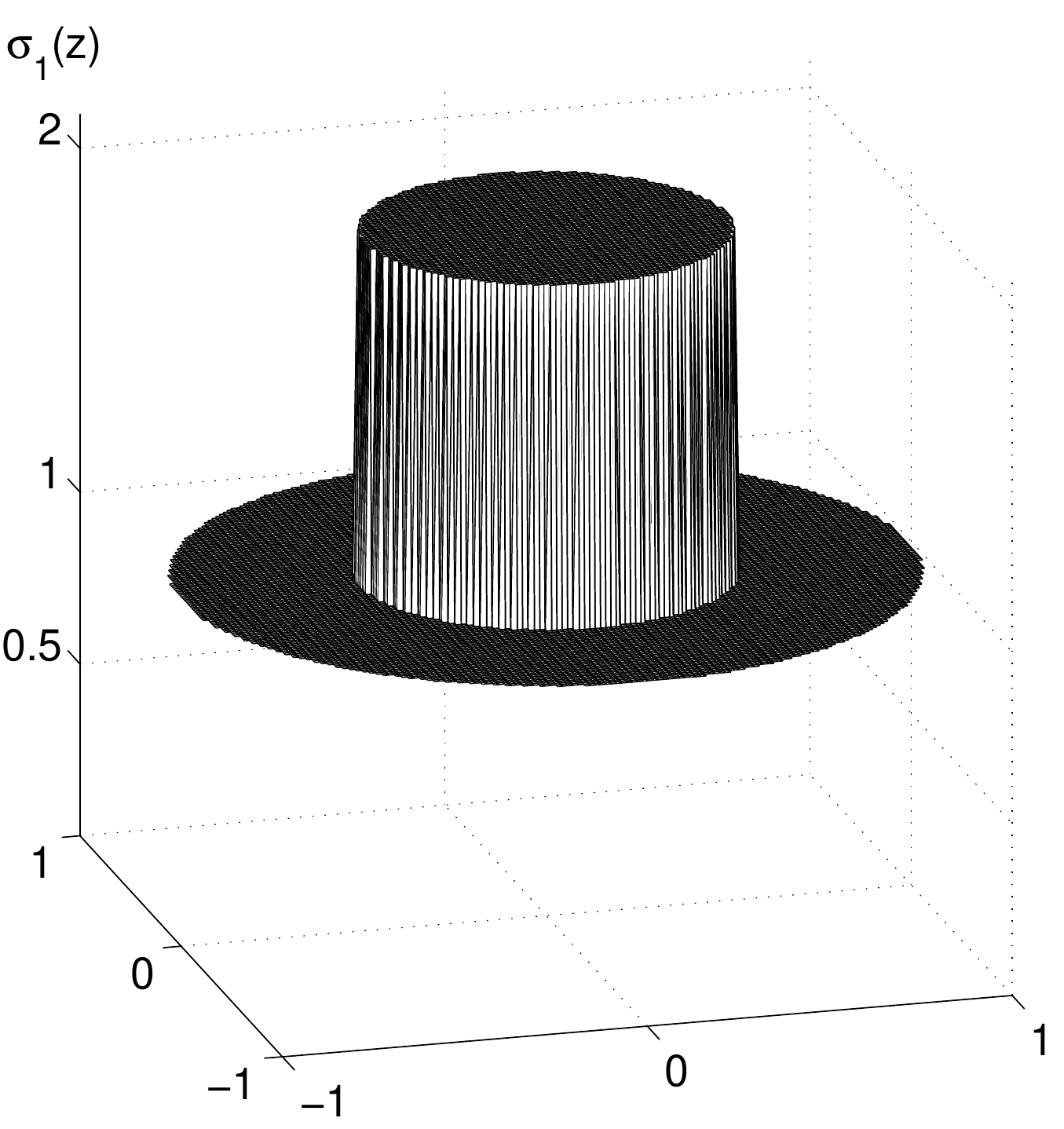}
\includegraphics[keepaspectratio=true, height = 5.5cm]{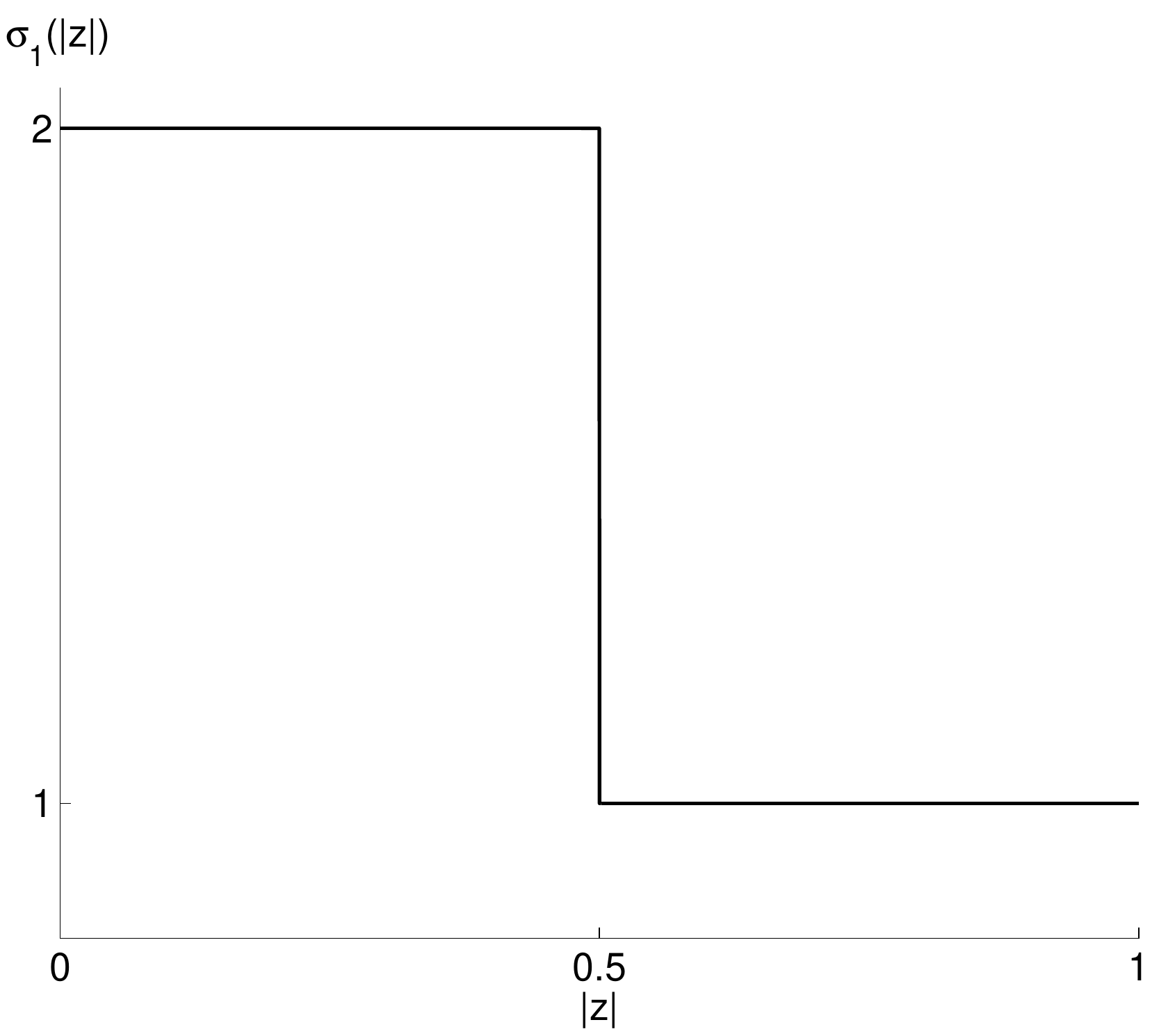}
\caption{\label{fig:sigma1}{\footnotesize Rotationally symmetric conductivity example $\sigma_1$. Left: plot of $\sigma_1$ as a function of $z$ ranging in the unit disc with jump discontinuities along the curve $|z|=0.5$. Right: Profile of $\sigma_1$ as a function of $|z|$ ranging in [0,1] with a jump discontinuity at $|z| = 0.5$.}}
\end{figure}


\emph{Computing the scattering transform}

\vskip 5mm

The numerical evaluation of the AP scattering transform $\tau (k)$ was computed for this example conductivity $\sigma_1$. The used algorithm generates for each fixed $k$ \CGO \ solutions for the Beltrami equation in the $z$-plane using the Huhtanen-Per\"am\"aki preconditioned solver (\cite{Huhtanen2012}) by executing standard $\C$-linear GMRES. To do so a $z$-grid is required. Such a $z$-grid is determined by a size parameter $\mbox{m}_{\mbox{\tiny z}}$ so that $2^{\mbox{\tiny m}_{\mbox{\tiny z}}}\times 2^{\mbox{\tiny m}_{\mbox{\tiny z}}}$ equidistributed points are taken in the $z$-plane within the square $[-\mbox{s}_{\mbox{\tiny z}},\mbox{s}_{\mbox{\tiny z}})\times [-\mbox{s}_{\mbox{\tiny z}},\mbox{s}_{\mbox{\tiny z}})$ with $\mbox{s}_{\mbox{\tiny z}}=2.1$ and step $\mbox{h}_{\mbox{\tiny z}} = \mbox{s}_{\mbox{\tiny z}}/2^{\mbox{\tiny m}_{\mbox{\tiny z}}-1}$. Let us denote the numerical computation of $\tau$ with a $z$-grid size parameter $\mbox{m}_{\mbox{\tiny z}}$ by $\widetilde{\tau}_{\mbox{\tiny m}_{\mbox{\tiny z}}}$.

We know that for radial, real-valued conductivities the function $\T$ is rotationally symmetric too (Theorem \ref{thrm:rot_sym}). Therefore, we can restrict the numerical computation of $\T$ to positive values of the $k$-plane. The chosen $k$-grid was all the points in the interval $[0,150]$ with step $\mbox{h} = 0.1$. Thus, the scattering transform is computed farther away from the origin than any previous computation.

The vector $\widetilde{\tau}_{\mbox{\tiny m}_{\mbox{\tiny z}}}$ corresponding to the aforementioned $k$-grid was computed for both $\mbox{m}_{\mbox{\tiny z}} = 10$ and $\mbox{m}_{\mbox{\tiny z}} = 11$. Next, the function $\widetilde{\T}_{\mbox{\tiny m}_{\mbox{\tiny z}}}$ defined as
\begin{equation*}
  \widetilde{\T}_{\mbox{\tiny m}_{\mbox{\tiny z}}}(k) = - 4 \pi i \,{\overline k} \,\,\widetilde{\tau}_{\mbox{\tiny m}_{\mbox{\tiny z}}}(k),
\end{equation*}
was evaluated on the same $k$-grid in $[0,150]$ for $\mbox{m}_{\mbox{\tiny z}} = 10$,$11$ from the AP scattering transform using identity \eqref{transform_comparison}.

One can consider the approximate scattering transform $\widetilde{\tau}_{11}$ (with $\mbox{m}_{\mbox{\tiny z}} = 11$) reliable since its accuracy is high in a double sense.

On one hand, Figure \ref{fig:comparison} shows the lack of significant differences between $\widetilde{\T}_{10}$ and $\widetilde{\T}_{11}$ for $k$ close to the origin. As a matter of fact, using the notation
\begin{equation}\label{rel_err_notation}
E_B := {\displaystyle\max_{|k|\in B} |\mbox{Im}(\widetilde{\tau}_{10}(k) - \widetilde{\tau}_{11}(k))|\over \displaystyle\max_{|k|\in B} |\mbox{Im}(\widetilde{\tau}_{11}(k))|}\cdot 100\%
\end{equation}
for a subinterval $B$ of $[0,150]$, it follows that
$$E_{[0,20]} = 0.3533 \%,\qquad E_{[20,40]} = 4.0258 \%,\qquad E_{[40,60]} = 9.8138 \% .$$

\begin{figure}[!hbp]
\includegraphics[keepaspectratio=true, height = 5.2cm]{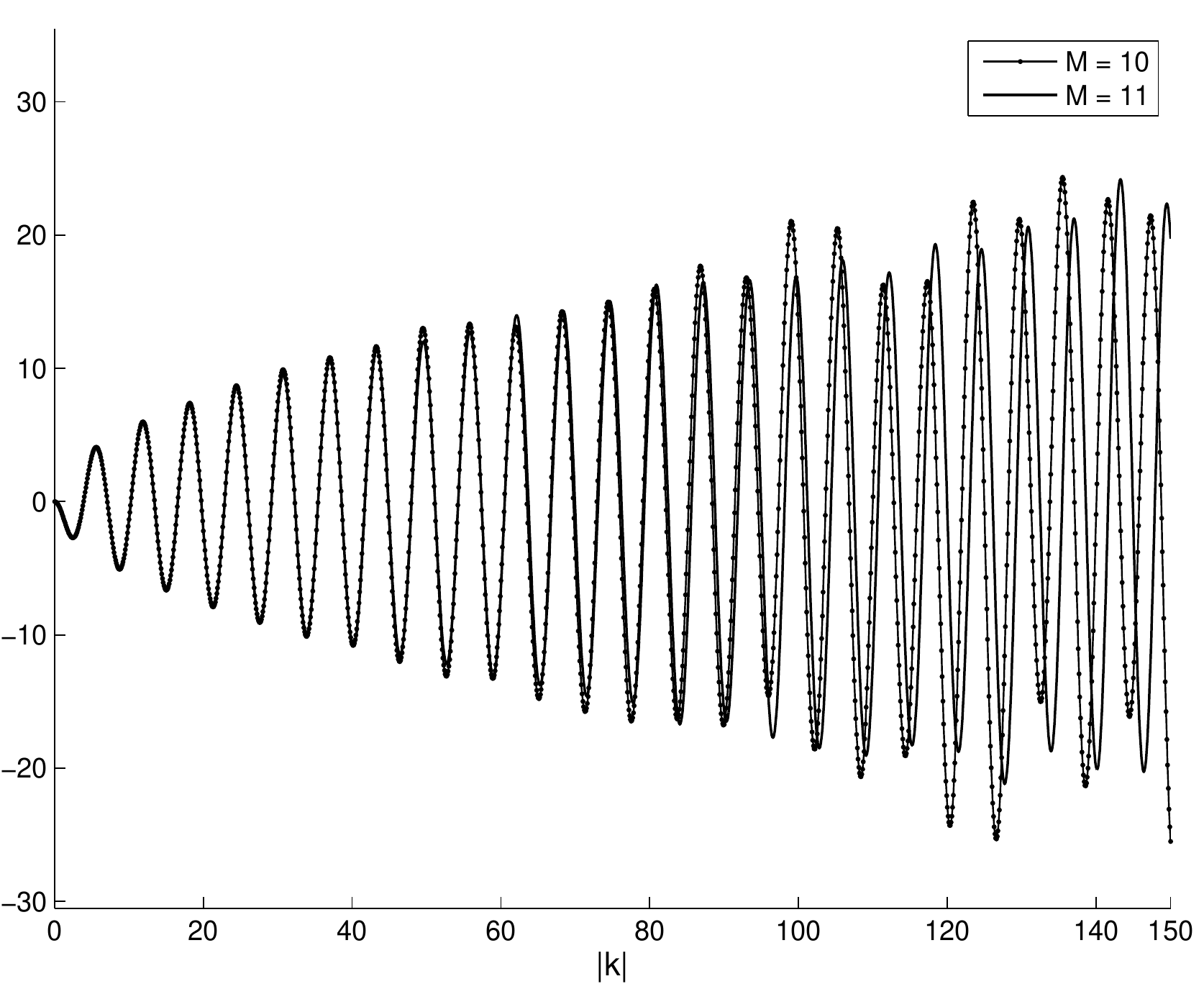}
\includegraphics[keepaspectratio=true, height = 5.2cm]{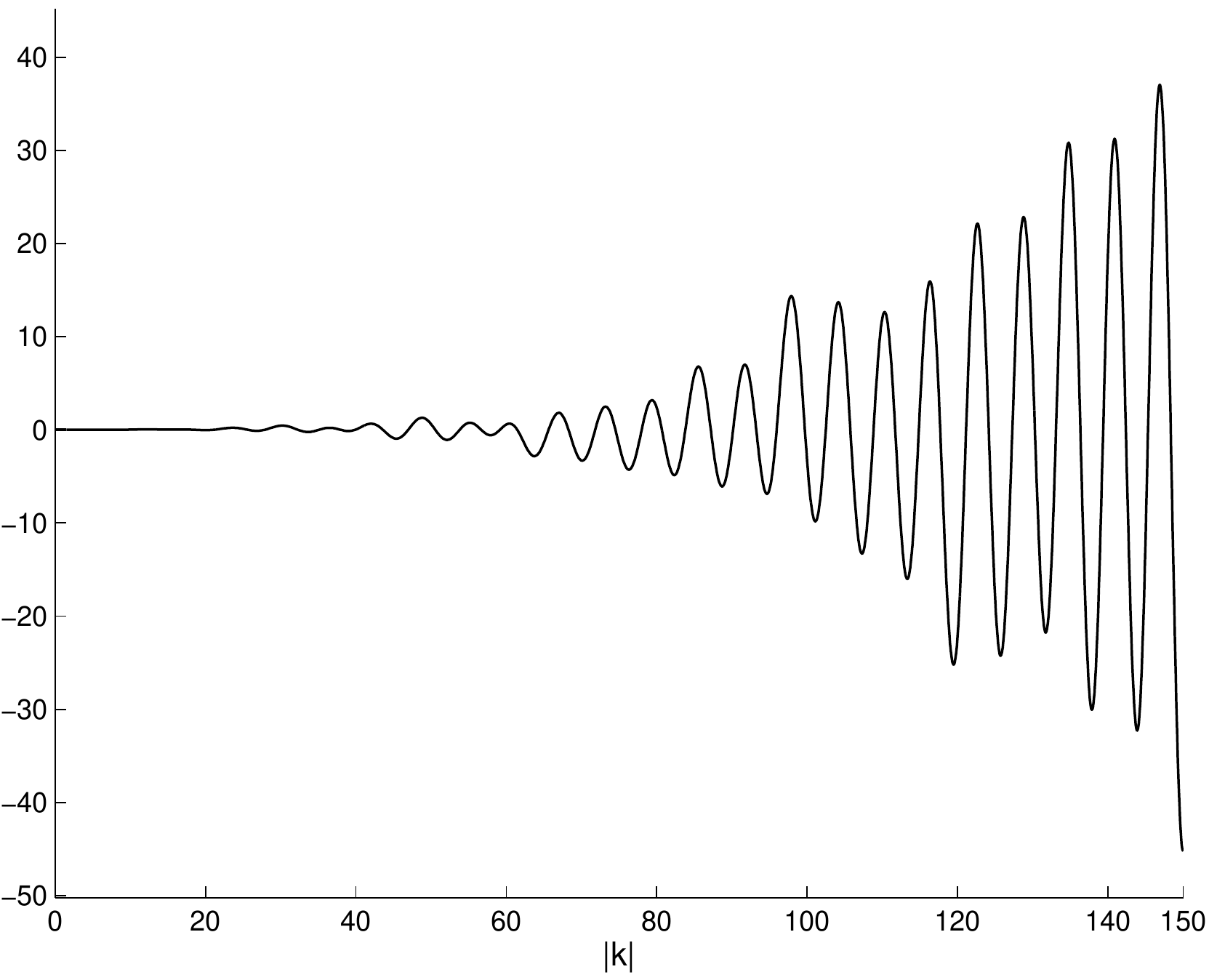}
\caption{\label{fig:comparison}{\footnotesize Comparison between $\widetilde{\T}_{10}$ and $\widetilde{\T}_{11}$ for $\sigma_1$. Left: Profiles of the real parts of $\widetilde{\T}_{10}$ (thin dotted line) and $\widetilde{\T}_{11}$ (thick solid line). Right: Profile of the difference $\mbox{Re}(\widetilde{\T}_{10} - \widetilde{\T}_{11})$. In both pictures $|k|$ ranges in [0,150] with step $\mbox{h} = 0.1$.}}
\end{figure}

On the other hand, $|\mbox{Re}( \widetilde{\tau}_{11})|$ is very small. By Theorem \ref{thrm:rot_sym} and formula \eqref{transform_comparison}, for a rotationally symmetric, real-valued conductivity the scattering transform $\tau$ restricted to real numbers has to be pure-imaginary valued. We obtain a nonzero real part $\mbox{Re}( \widetilde{\tau}_{\mbox{\tiny m}_{\mbox{\tiny z}}})$ for both $\mbox{m}_{\mbox{\tiny z}}$ values due to precision errors. In particular,
\[
|\mbox{Re}( \widetilde{\tau}_{11})|\leq 3.7196\times 10^{-9}.
\]

Note that one can extend $\widetilde{\T}_{11}$ to the disc centered at the origin with radius 150 in the $k$-plane by simple rotation since $\widetilde{\T}_{11}(k_1)=\widetilde{\T}_{11}(k_2)$ if $|k_1|=|k_2|$.

\vskip 5mm

\emph{Solving the D-bar equation}

\vskip 5mm

We compute a number of reconstructions of $\sigma_1$ using the D-bar method truncating the approximate scattering transform $\widetilde{\tau}_{11}$. This way the computation is optimized by restricting the degrees of freedom to the values of $\widetilde{\tau}_{11}(k)$ at grid points satisfying $|k|<R$, for some positive number $\mbox{R}$ called the cutoff frequency.


The program constructs the set of evaluation points in the $k$-plane for which the D-bar equation is solved for every fixed $z$, as a $k$-grid of $2^{\mbox{\tiny m}_{\mbox{\tiny k}}}\times 2^{\mbox{\tiny m}_{\mbox{\tiny k}}}$ points within the square $[-\mbox{s}_{\mbox{\tiny k}},\mbox{s}_{\mbox{\tiny k}})\times [-\mbox{s}_{\mbox{\tiny k}},\mbox{s}_{\mbox{\tiny k}})$ with $\mbox{s}_{\mbox{\tiny k}}=2.3\times R$ and step $\mbox{h}_{\mbox{\tiny k}} = \mbox{s}_{\mbox{\tiny k}}/2^{\mbox{\tiny m}_{\mbox{\tiny k}}-1}$. Let $\widetilde{\sigma}_1(z;\mbox{m}_{\mbox{\tiny k}},R)$ denote the shortcut reconstruction of $\sigma_1$ with size parameter $\mbox{m}_{\mbox{\tiny k}}$ and truncation radius $\mbox{R}$.

Again we can restrict the computation (of $\widetilde{\sigma}_1(z;\mbox{m}_{\mbox{\tiny k}},R)$ in this case) to positive values in the $z$-domain and afterwards extend to the plane by rotation. We evaluated $\widetilde{\sigma}_1(z;\mbox{m}_{\mbox{\tiny k}},R)$ for each point $z$ in a grid of a certain number $N_x$ of equispaced points in the real interval [0,1].

The vector $\widetilde{\sigma}_1(z;\mbox{m}_{\mbox{\tiny k}},R)$ was computed for $\mbox{R} = $ 5, 10, 15, 20, 25, 30, 35, 40, 50,..., 150.  For $\mbox{R}$ = 5, 10 we took $\mbox{m}_{\mbox{\tiny k}}$ = 8, 9 and for the rest of $\mbox{R}$ values we chose $\mbox{m}_{\mbox{\tiny k}}$ = 12. These are by far the largest nonlinear cut-off frequencies studied numerically so far. In Figure \ref{fig:Nrecon_sigma1} the reader can see three of these reconstruction examples.

Since the shortcut reconstruction for $\sigma_1$ is real-valued,
the obtained imaginary parts  are due to precision errors. Such
errors are displayed multiplied by $10^8$ in the following tables:

\vskip 3mm

{\tiny

\begin{tabular}{|c|c|c|c|c|c|c|c|c|c|c|c|c|c|c|}
  \hline

  $\mbox{m}_{\mbox{\tiny k}}$ & 8 & 8 & 9 & 9 & 12 & 12 & 12 & 12 & 12 & 12 & 12 & 12 & 12 & 12\\
  \hline
  $\mbox{R}$ & 5 & 10 & 5 & 10 & 15 & 20 & 25 & 30 & 35 & 40 & 50 & 60 & 70 & 80\\
  \hline
  \textbf{error$_1\times 10^8$} & 4.67 & 2.36 & 4.67 & 2.36 & 2.15 & 2.24 & 2.57 & 2.84 & 3.04 & 3.26 & 3.55 & 3.78 & 4 & 4.19\\
  \hline
\end{tabular}

\vskip 2mm

\begin{tabular}{|c|c|c|c|c|c|c|c|}
  \hline

  $\mbox{m}_{\mbox{\tiny k}}$ &  12 & 12 & 12 & 12 & 12 & 12 & 12\\
  \hline
  $\mbox{R}$ &  90 & 100 & 110 & 120 & 130 & 140 & 150\\
  \hline
  \textbf{error$_1\times 10^8$} &  4.36 & 4.48 & 4.57 & 4.61 & 4.62 & 4.76 & 4.89\\
  \hline
\end{tabular}

}

\vskip 2mm

\noindent where $\text{error}_1 = \,\,\displaystyle\max_{|z|\leq
1}|\mbox{Im}(\widetilde{\sigma}_1(z;\mbox{m}_{\mbox{\tiny
k}},R))|.$

\clearpage

\begin{figure}[!hbp]
\includegraphics[keepaspectratio=true, height = 5.3cm]{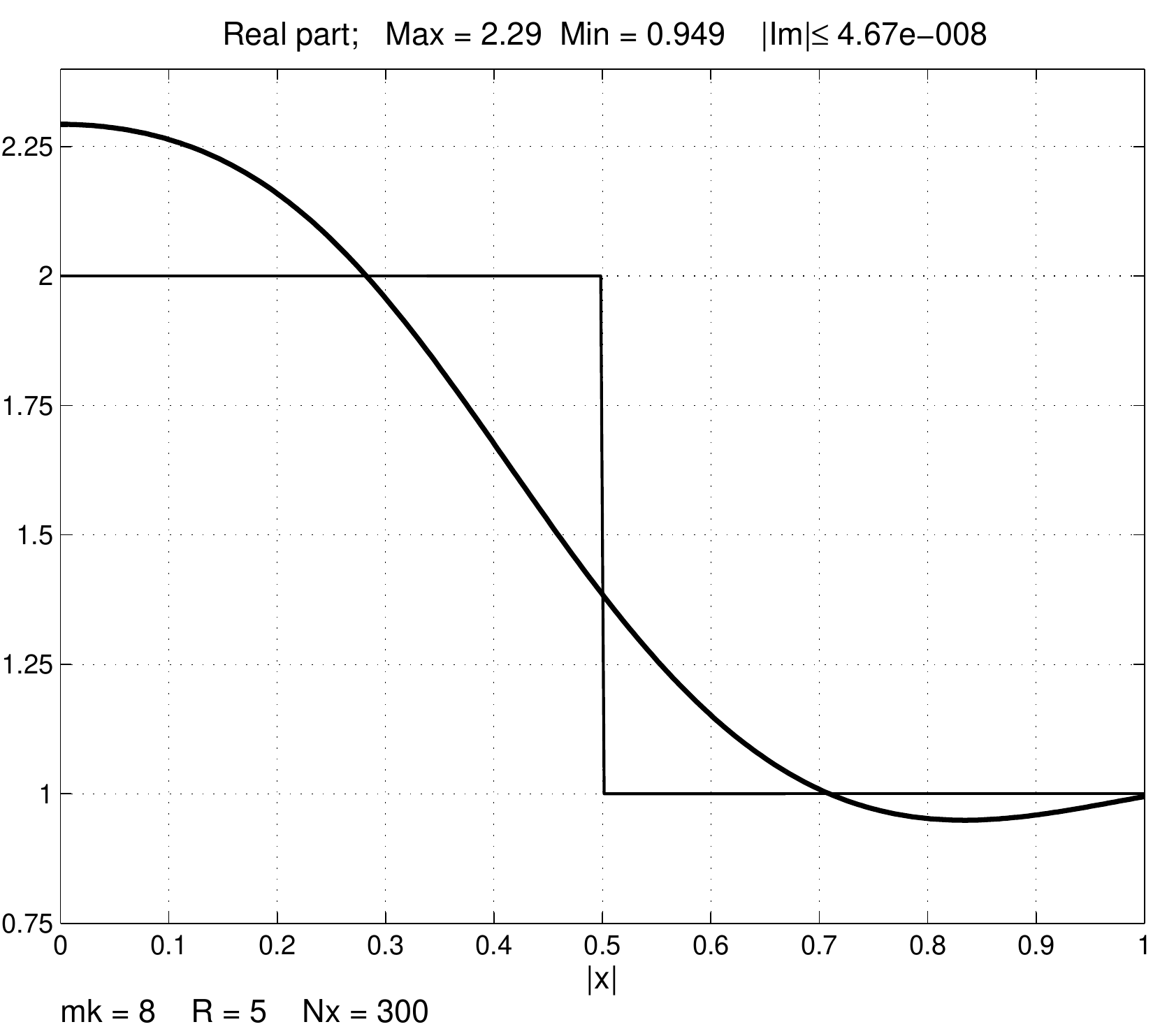}
\includegraphics[keepaspectratio=true, height = 5.3cm]{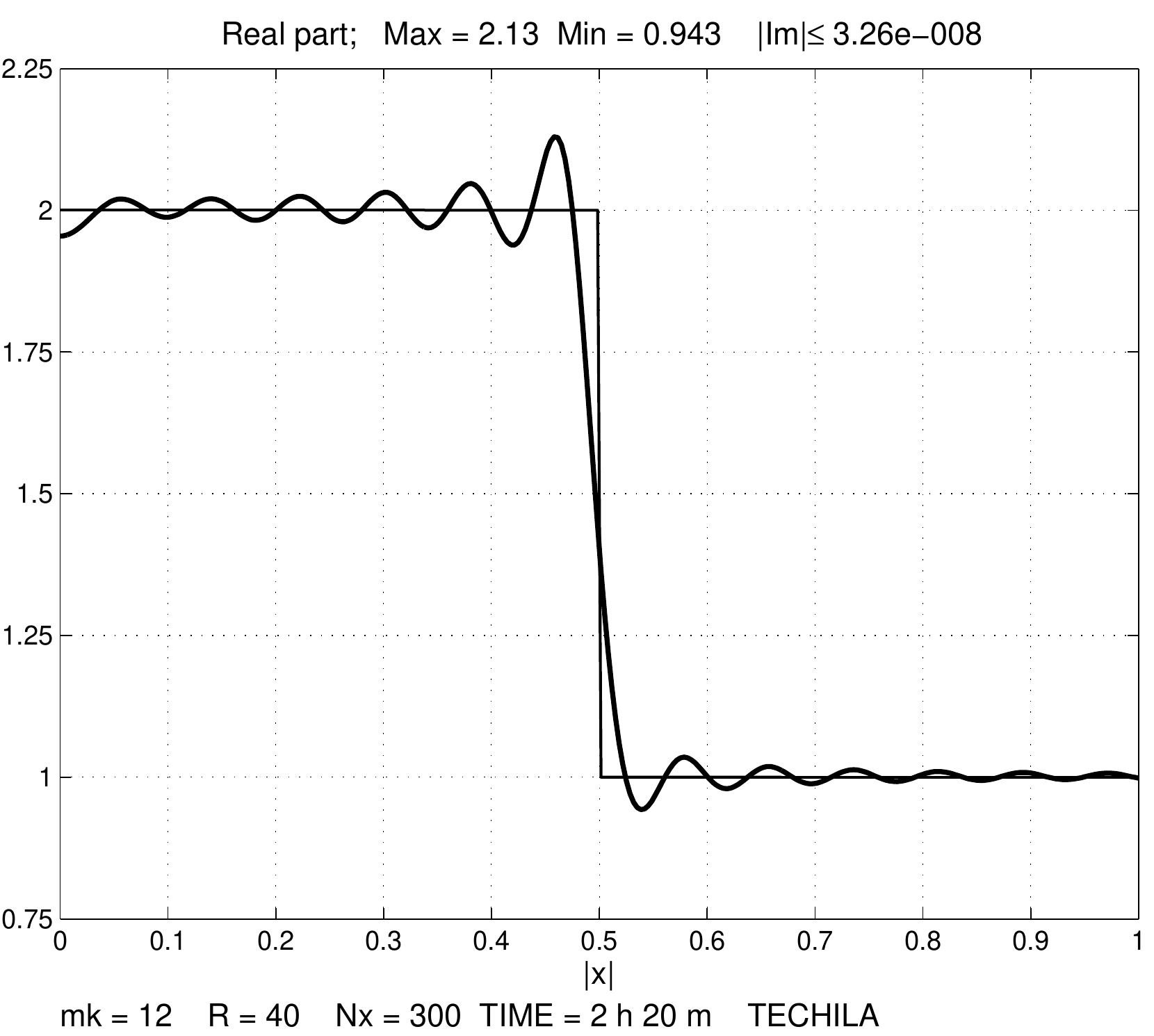}
\includegraphics[keepaspectratio=true, height = 6cm]{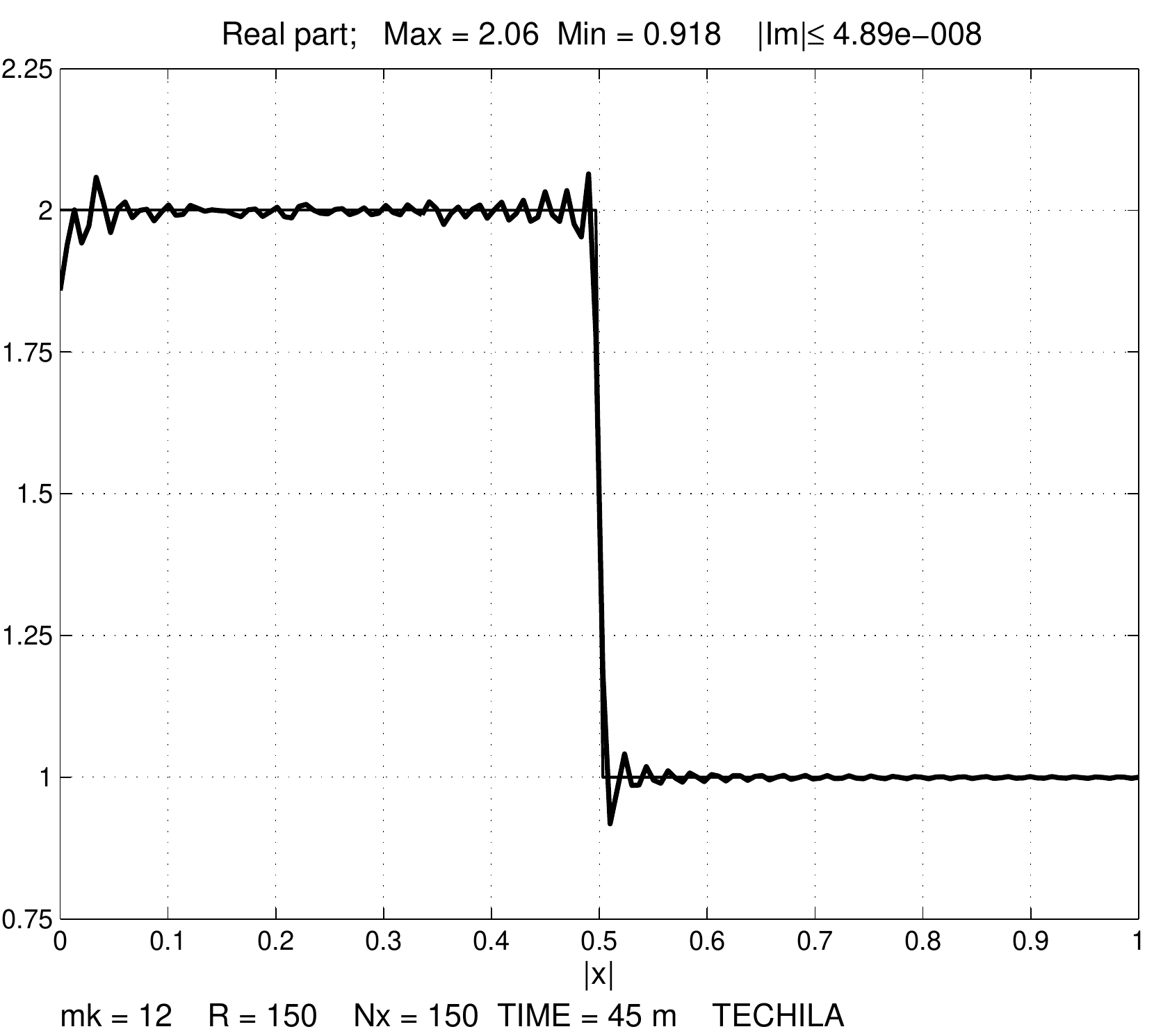}
\caption{\label{fig:Nrecon_sigma1}{\footnotesize  Three shortcut method reconstruction examples computed from a scattering transform corresponding to $\sigma_1$. Each picture shows the real part of the approximate reconstruction (thick line) and the original conductivity $\sigma_1$ (thin line). In addition, the following information is shown: maximum and minimum values of the real part of the reconstruction, maximum of the absolute value of the imaginary part of the reconstruction (error), taken parameters $\mbox{m}_{\mbox{\tiny k}}$, $\mbox{R}$, $N_x$ and computation time if possible. The examples for $\mbox{m}_{\mbox{\tiny k}} = 12$ and $\mbox{R} = 40, 150$ were computed using grid computation provided by Techila.}}
\end{figure}

\subsubsection{Example $\sigma_2$}

Now, we repeat the same experiments for another rotationally symmetric conductivity example $\sigma_2$ defined as follows: $\sigma_2(z) = 2$ if $|z|<0.1$, $0.2 < |z|< 0.3$ or $0.4 < |z|< 0.5$ and $\sigma_2(z) = 1$ otherwise. Figure \ref{fig:sigma2} shows its profiles.

\clearpage

\begin{figure}[!hbp]
\includegraphics[keepaspectratio=true, height = 5cm]{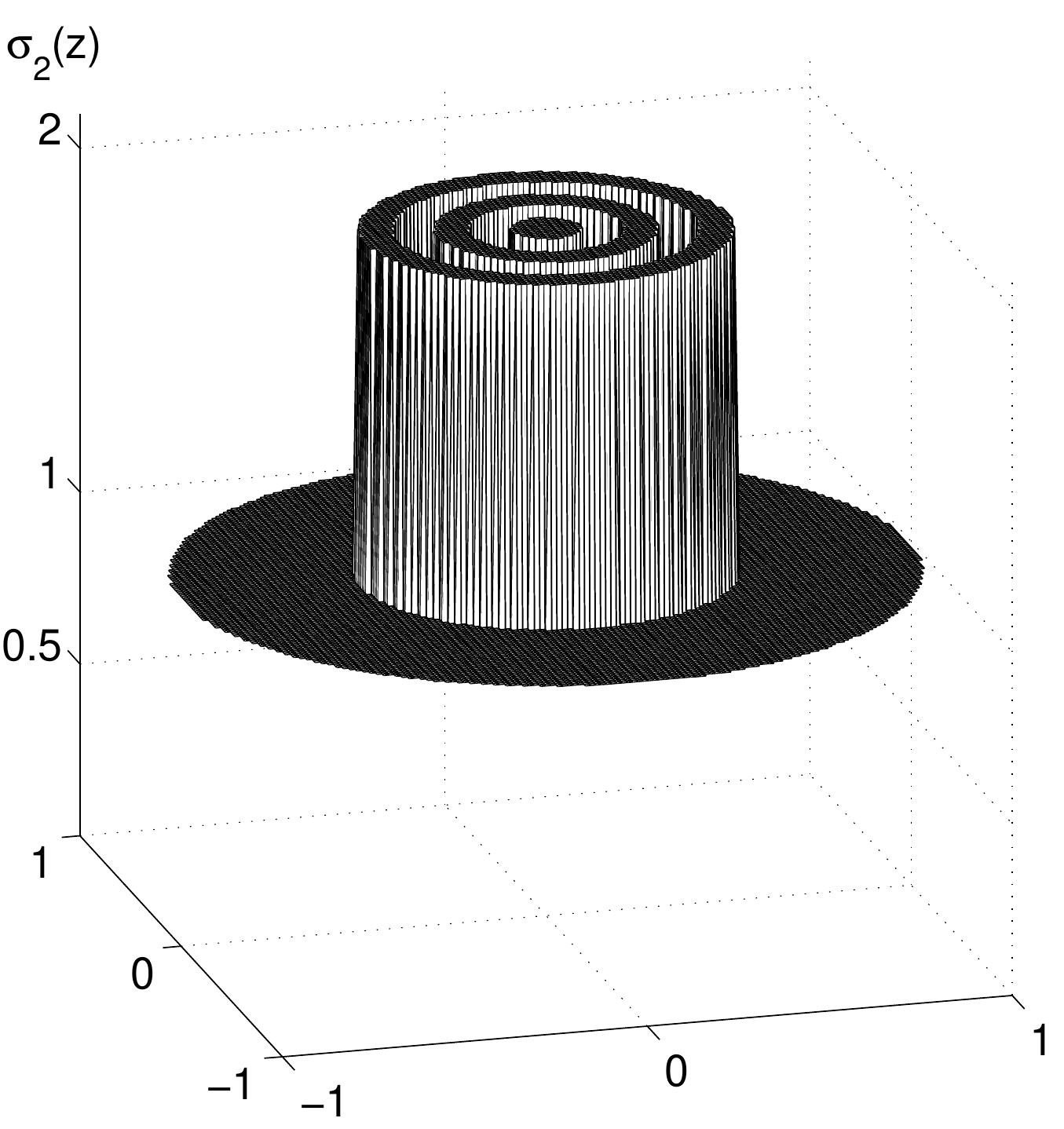}
\includegraphics[keepaspectratio=true, height = 5.5cm]{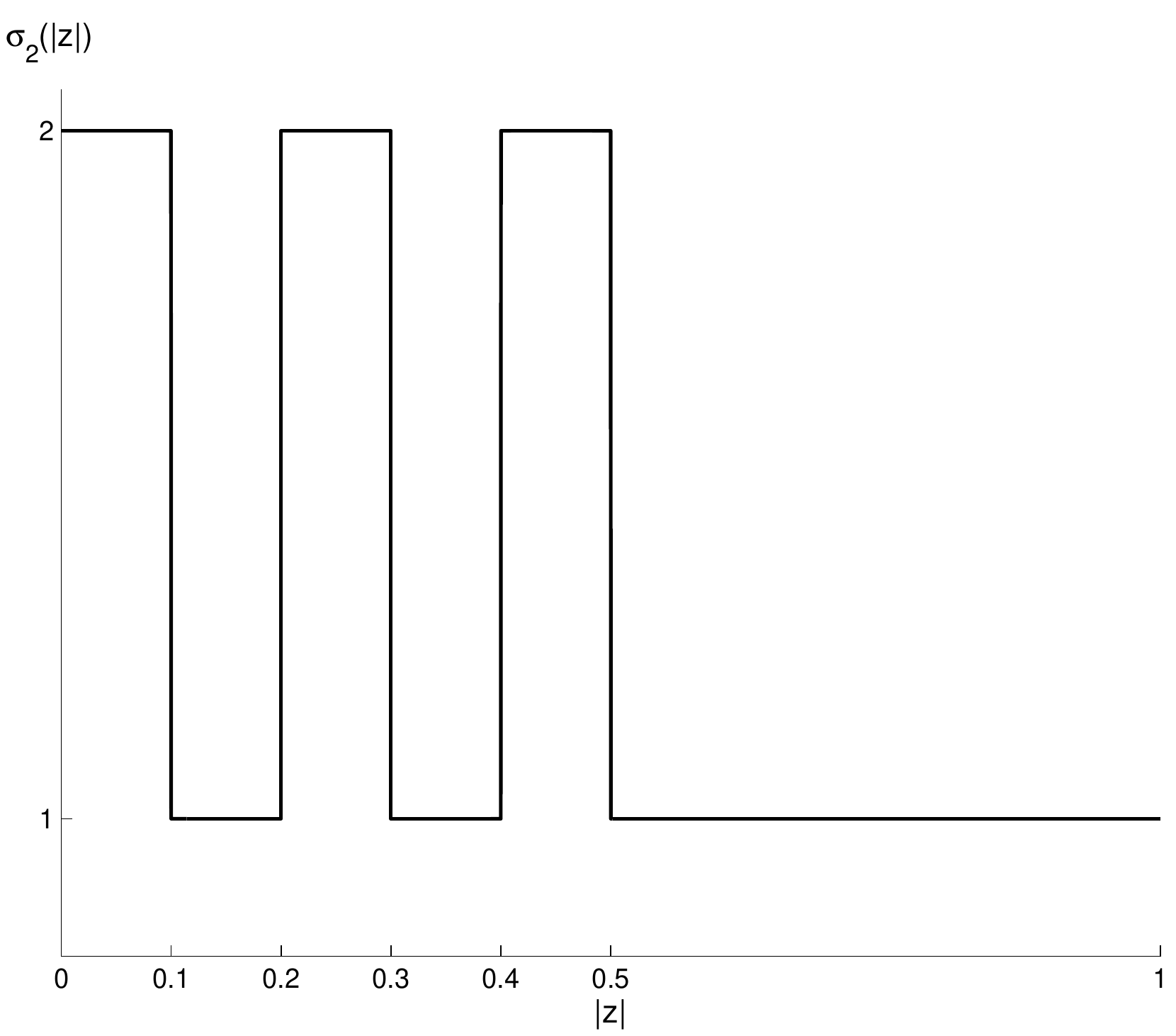}
\caption{\label{fig:sigma2}{\footnotesize Rotationally symmetric conductivity example $\sigma_2$. Left: plot of $\sigma_2$ as a function of $z$ ranging in the unit disc with jump discontinuities along the curves $|z|=0.1$, $|z|=0.2$, $|z|=0.3$, $|z|=0.4$, $|z|=0.5$. Right: Profile of $\sigma_2$ as a function of $|z|$ ranging in [0,1] with jump discontinuities at $|z|=0.1$, $|z|=0.2$, $|z|=0.3$, $|z|=0.4$, $|z| = 0.5$.}}
\end{figure}

\vskip 5mm

\emph{Computing the scattering transform}

\vskip 5mm

For conductivity $\sigma_2$ the scattering transform $\widetilde{\tau}_{11}$ (with $\mbox{m}_{\mbox{\tiny z}} = 11$) turns out to be very accurate too. Indeed, $\widetilde{\T}_{10}$ and $\widetilde{\T}_{11}$ corresponding to $\sigma_2$ are quite similar for $|k|$ small in view of Figure \ref{fig:comparison3} and the following facts (see notation \eqref{rel_err_notation}):
\begin{align*}
&E_{[0,20]} = 0.4606 \%,\qquad E_{[20,40]} = 3.7474 \%,\qquad E_{[40,60]} = 6.4308 \% ,\qquad\text{for }\sigma_2,\\
&|\mbox{Re}( \widetilde{\tau}_{11})|\leq 1.3653\times 10^{-7},\qquad\text{for }\sigma_2 .\\
\end{align*}

\emph{Solving the D-bar equation}

\vskip 5mm

Likewise the vector $\widetilde{\sigma}_2(z;\mbox{m}_{\mbox{\tiny k}},R)$ was computed for $\mbox{R} = $ 5, 10, 15, 20, 25, 30, 35, 40, 50,..., 150.  For $\mbox{R}$ = 5, 10 we took $\mbox{m}_{\mbox{\tiny k}}$ = 8, 9 and for the rest of $\mbox{R}$ values we chose $\mbox{m}_{\mbox{\tiny k}}$ = 12. All these reconstructions were computed using parallel computation provided by Techila. Figure \ref{fig:Nrecon_sigma3} shows some of these numerical results. Below we list the errors $\text{error}_2 = \,\,\max_{|z|\leq 1}|\mbox{Im}(\widetilde{\sigma}_2(z;\mbox{m}_{\mbox{\tiny k}},R))|$:

{\tiny

\begin{tabular}{|c|c|c|c|c|c|c|c|c|c|c|c|c|c|c|}
  \hline

  $\mbox{m}_{\mbox{\tiny k}}$ & 8 & 8 & 9 & 9 & 12 & 12 & 12 & 12 & 12 & 12 & 12 & 12 & 12 & 12\\
  \hline
  $\mbox{R}$ & 5 & 10 & 5 & 10 & 15 & 20 & 25 & 30 & 35 & 40 & 50 & 60 & 70 & 80\\
  \hline
  \textbf{error$_2\times 10^7$} & 1.56 & 2.16 & 1.56 & 2.17 & 5.12 & 5.65 & 3.97 & 4.88 & 6.48 & 14.1 & 19.3 & 14.8 & 18.4 & 28.4\\
  \hline
\end{tabular}

\begin{tabular}{|c|c|c|c|c|c|c|c|}
  \hline

  $\mbox{m}_{\mbox{\tiny k}}$ &  12 & 12 & 12 & 12 & 12 & 12 & 12\\
  \hline
  $\mbox{R}$ &  90 & 100 & 110 & 120 & 130 & 140 & 150\\
  \hline
  \textbf{error$_2\times 10^7$}  & 20.3 & 20.7&  28.8 & 25.3 & 24.3 & 21.8 & 19.7\\
  \hline
\end{tabular}

}


\begin{figure}[!hbp]
\includegraphics[keepaspectratio=true, height = 4.9cm]{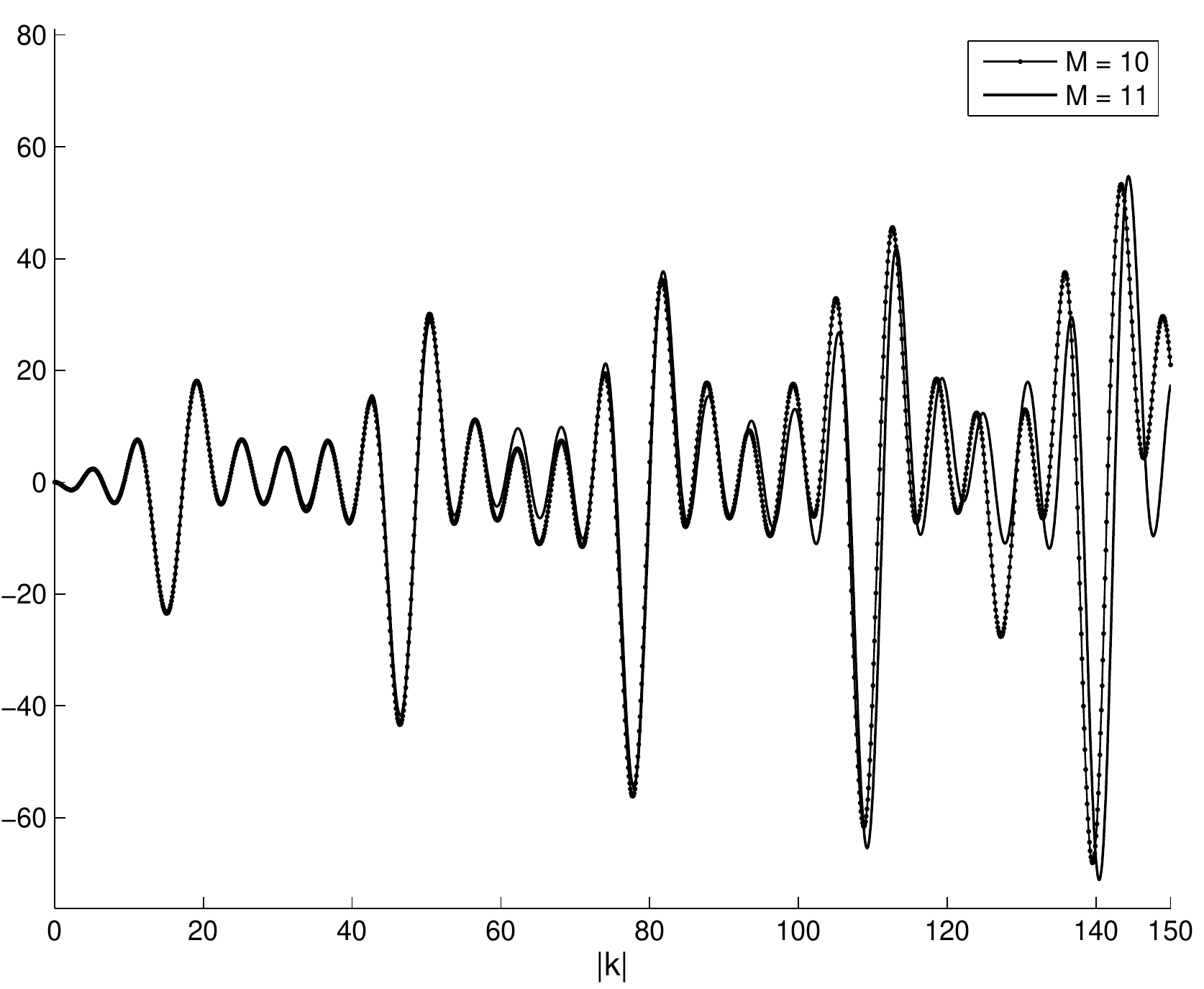}
\includegraphics[keepaspectratio=true, height = 4.9cm]{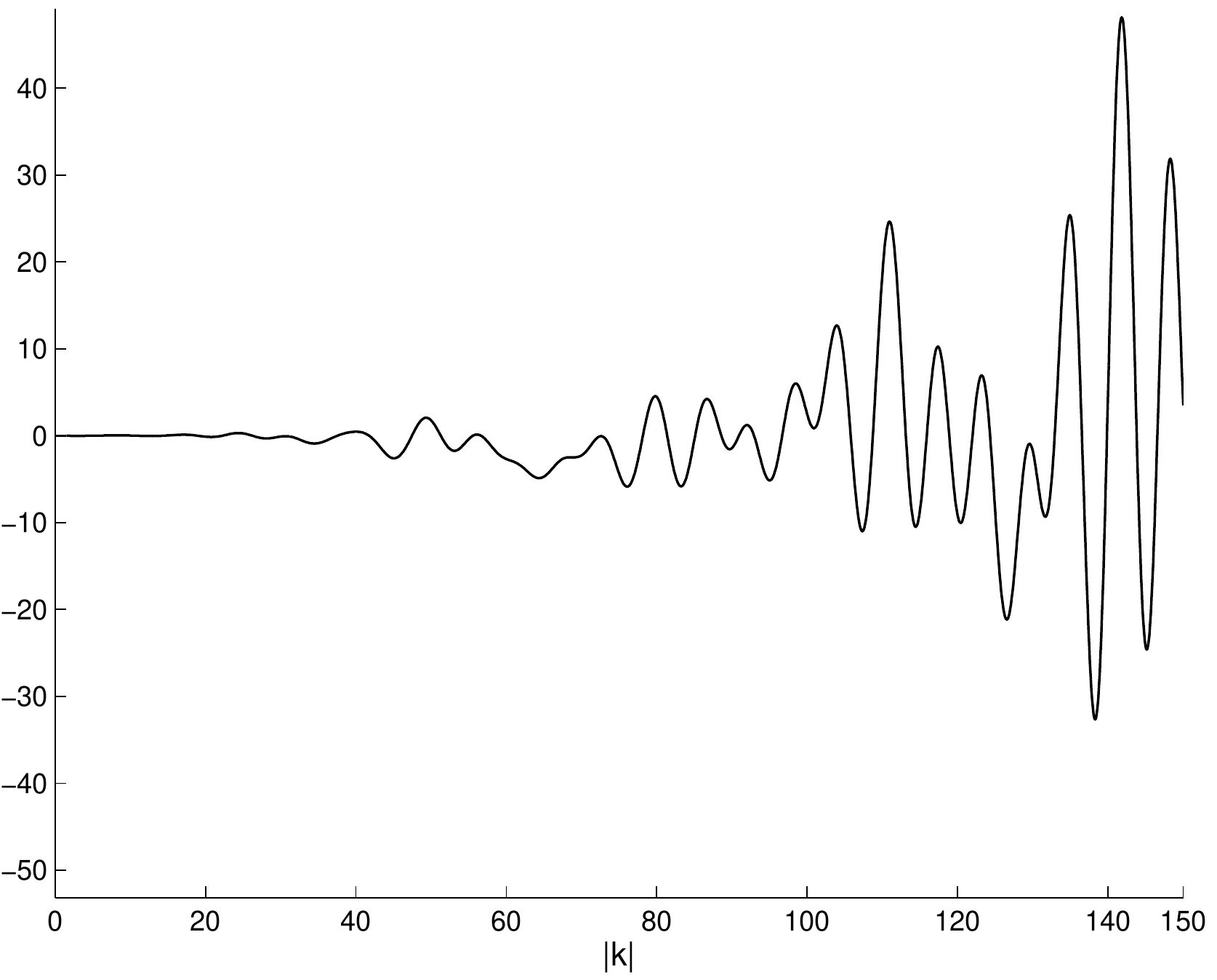}
\caption{\label{fig:comparison3}{\footnotesize Comparison between $\widetilde{\T}_{10}$ and $\widetilde{\T}_{11}$ for $\sigma_2$. Left: Profiles of the real parts of $\widetilde{\T}_{10}$ (thin dotted line) and $\widetilde{\T}_{11}$ (thick solid line). Right: Profile of the difference $\mbox{Re}(\widetilde{\T}_{10} - \widetilde{\T}_{11})$. In both pictures $|k|$ ranges in [0,150] with step $\mbox{h} = 0.1$. Note that the vertical axis scales are different from Figure \ref{fig:comparison}. }}
\end{figure}

\begin{figure}[!hbp]
\includegraphics[keepaspectratio=true, height = 5.3cm]{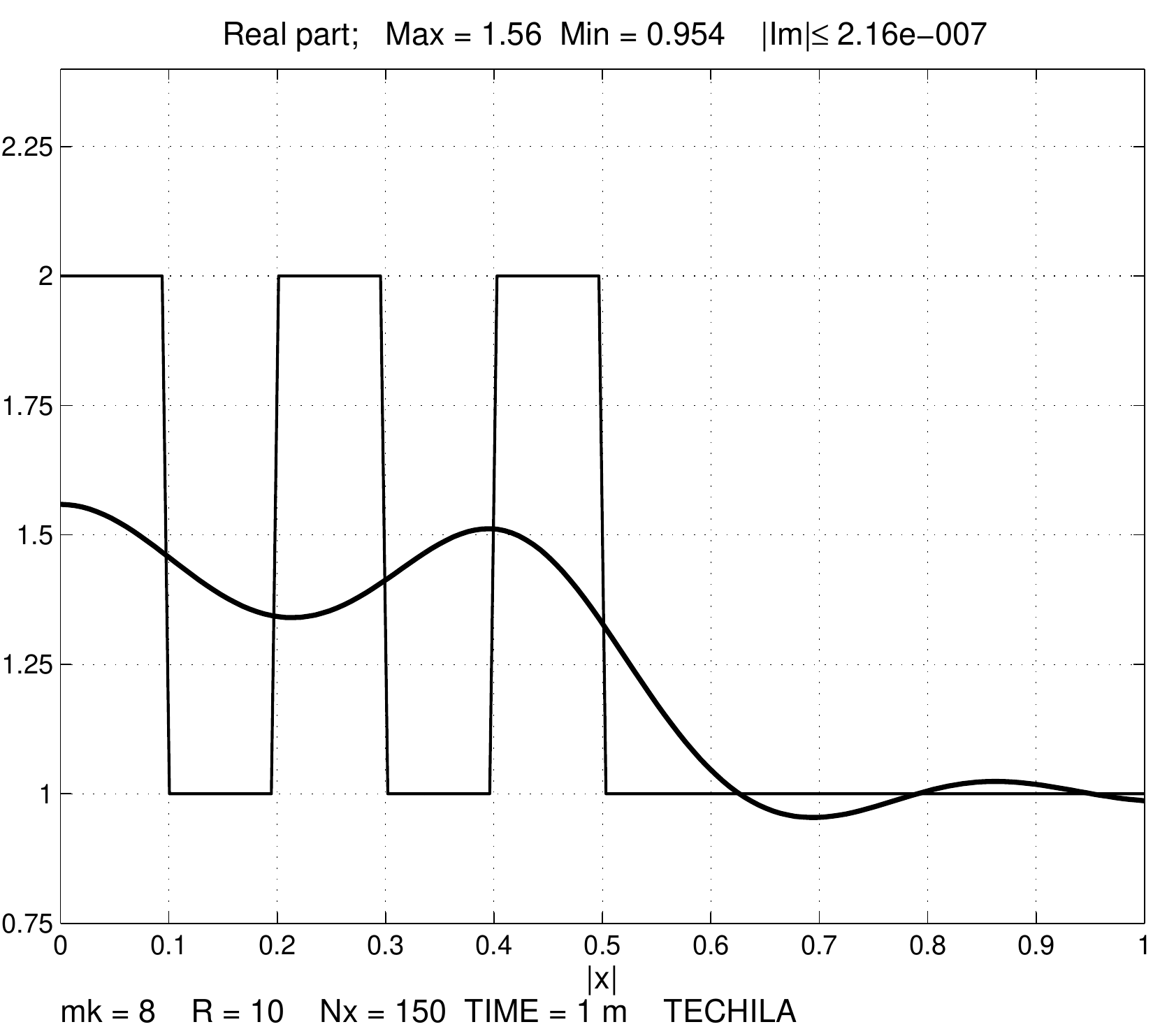}
\includegraphics[keepaspectratio=true, height = 5.3cm]{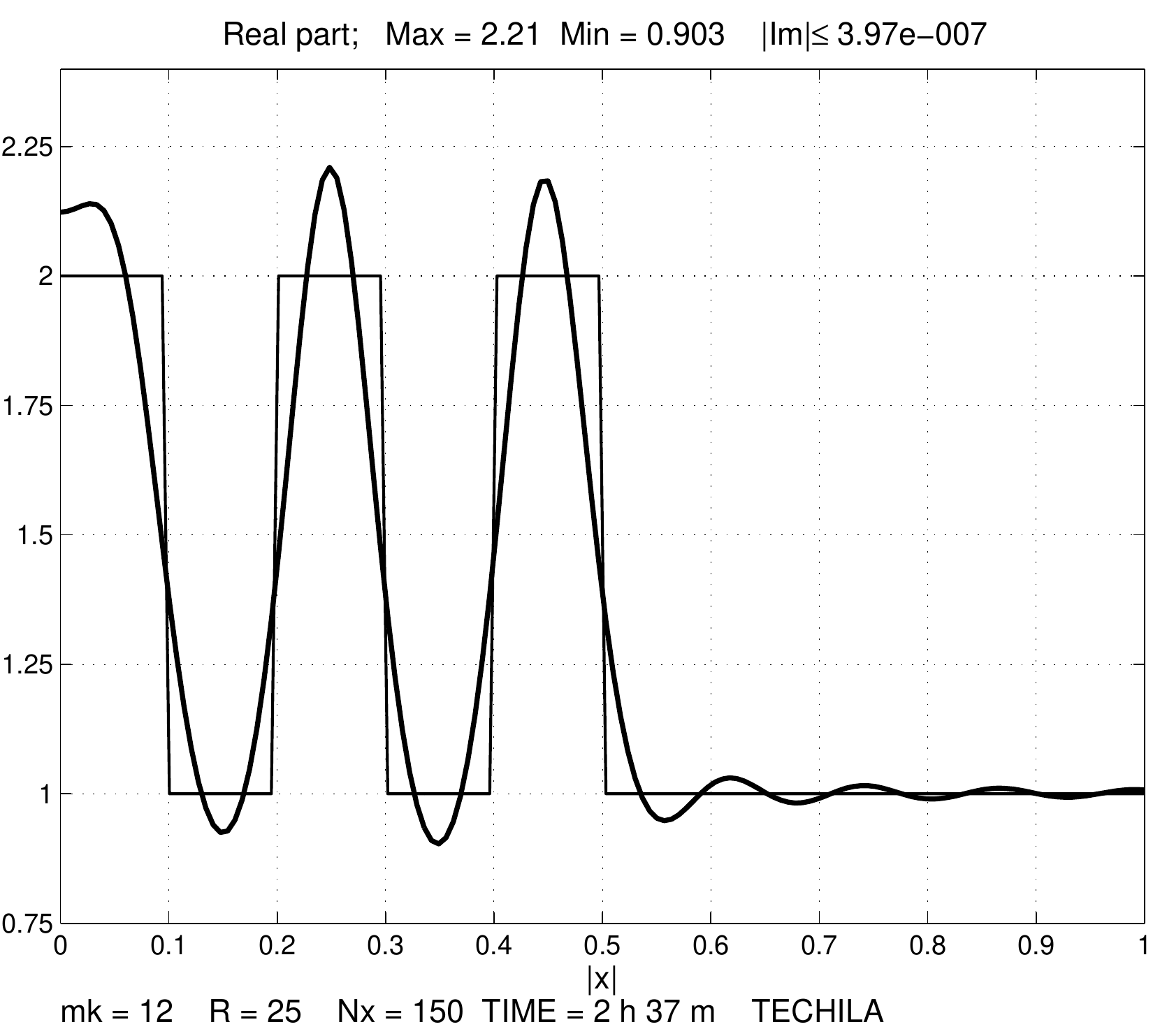}
\includegraphics[keepaspectratio=true, height = 5.3cm]{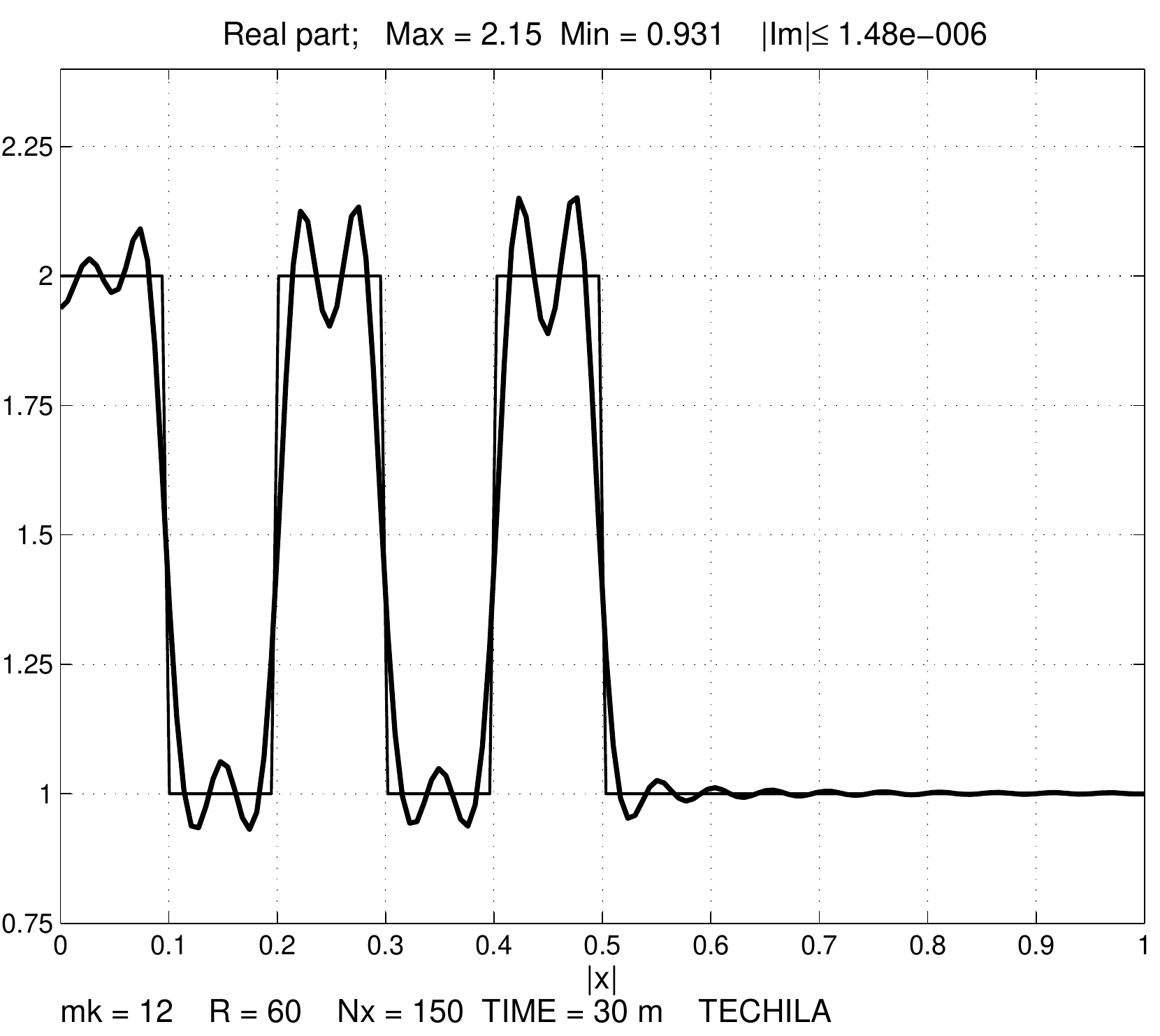}
\includegraphics[keepaspectratio=true, height = 5.3cm]{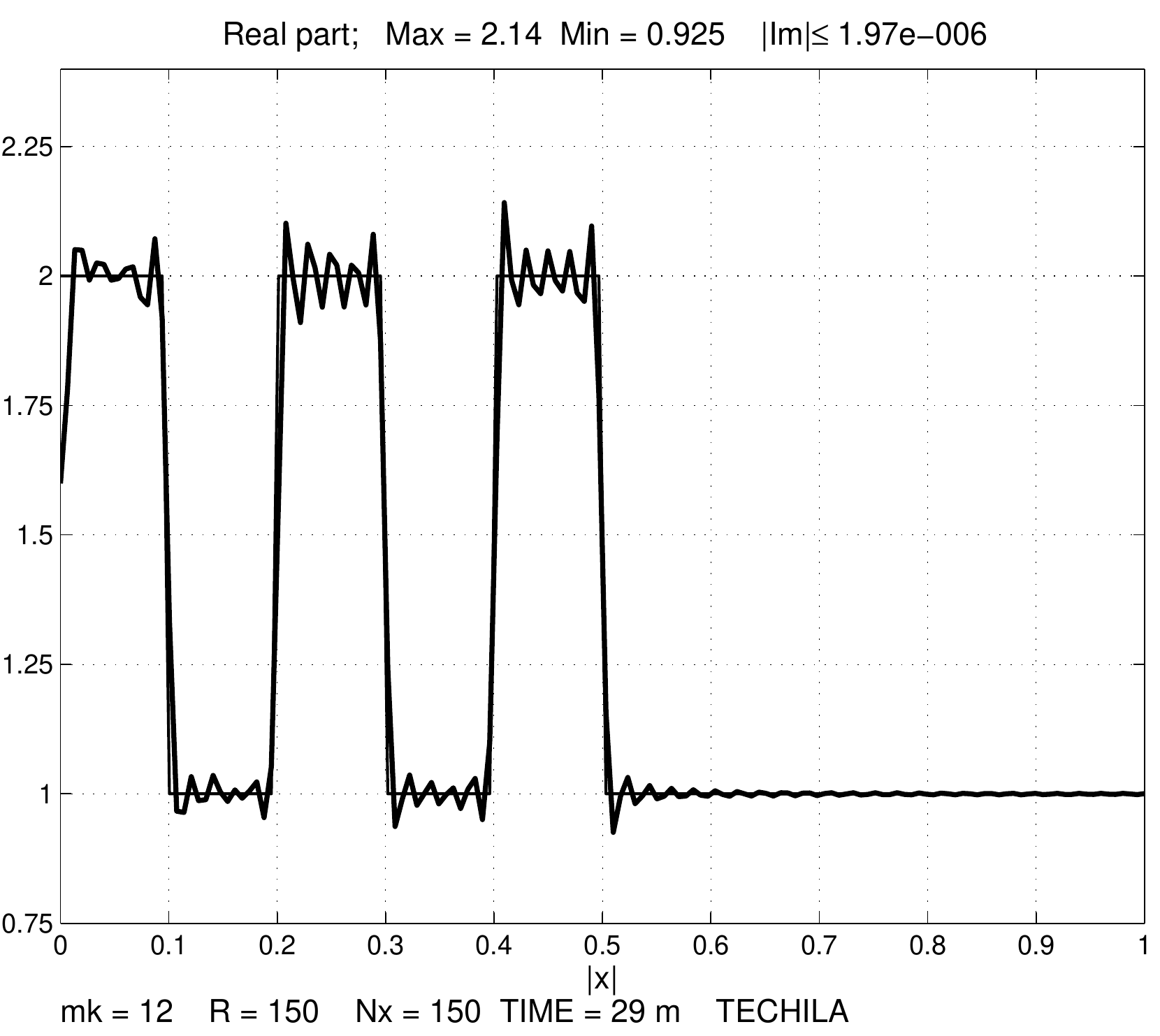}
\caption{\label{fig:Nrecon_sigma3}{\footnotesize  Four shortcut method reconstruction examples computed from a scattering transform corresponding to $\sigma_2$. Each picture shows the real part of the approximate reconstruction (thick line) and the original conductivity $\sigma_2$ (thin line). In addition, the following information is shown: maximum and minimum values of the real part of the reconstruction, maximum of the absolute value of the imaginary part of the reconstruction (error), taken parameters $\mbox{m}_{\mbox{\tiny k}}$, $\mbox{R}$, $N_x$ and computation time. Axes scale is the same in the four profiles. These examples were computed using grid computation provided by Techila.}}
\end{figure}

\clearpage

\subsection{Comparison of the two methods in nonsymmetric
cases}\label{sec:comparison}

We apply both reconstruction methods to some discontinuous non-radial ``checkerboard\-style'' conductivities without assuming EIT data.  We choose the conductivity examples $\sigma_3$, $\sigma_4$ shown in Figure \ref{fig:PWC}. Both take value 1 near the boundary of the unit disc. Notice that the contrasts of these examples $\sigma_3$, $\sigma_4$ (defined as the difference between the maximum and the minimum) are $1.5$ and $2.8$, respectively.

\begin{figure}[!hp]
\begin{picture}(320,150)
\put(-20,290){\includegraphics[width=7cm]{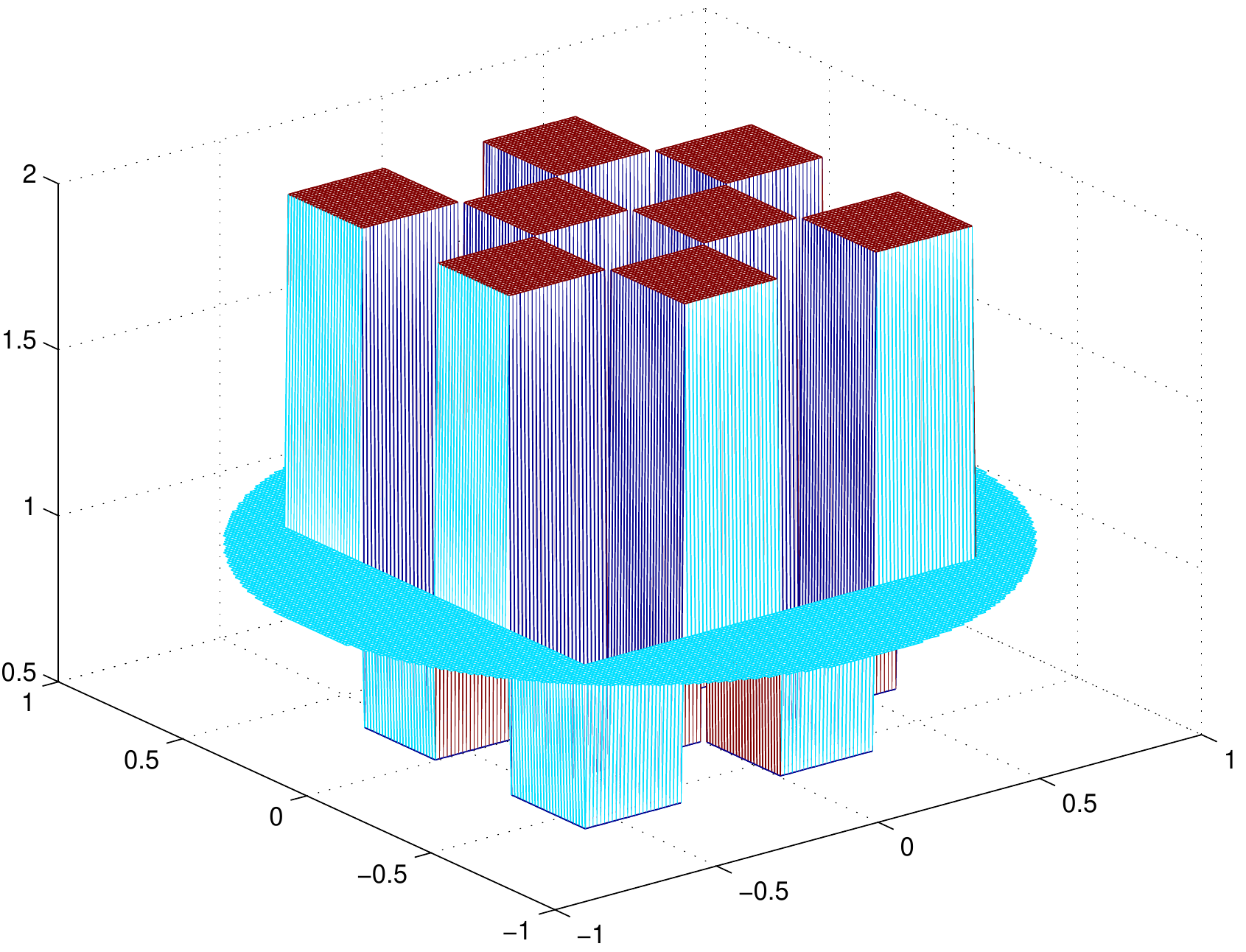}}
\put(70,260){\large a)}
\put(190,290){\includegraphics[width=5cm]{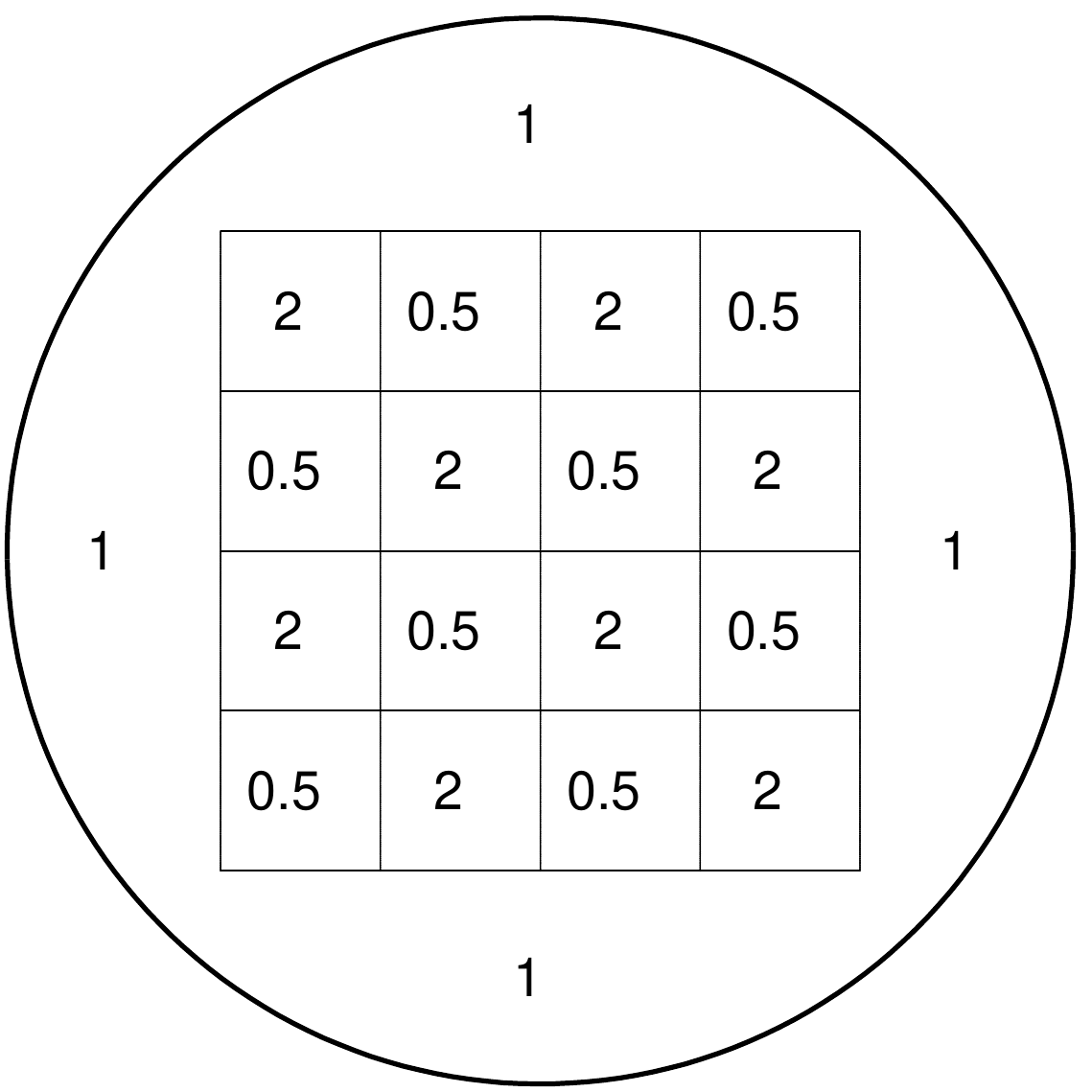}}
\put(260,260){\large b)}
\put(-20,90){\includegraphics[width=7cm]{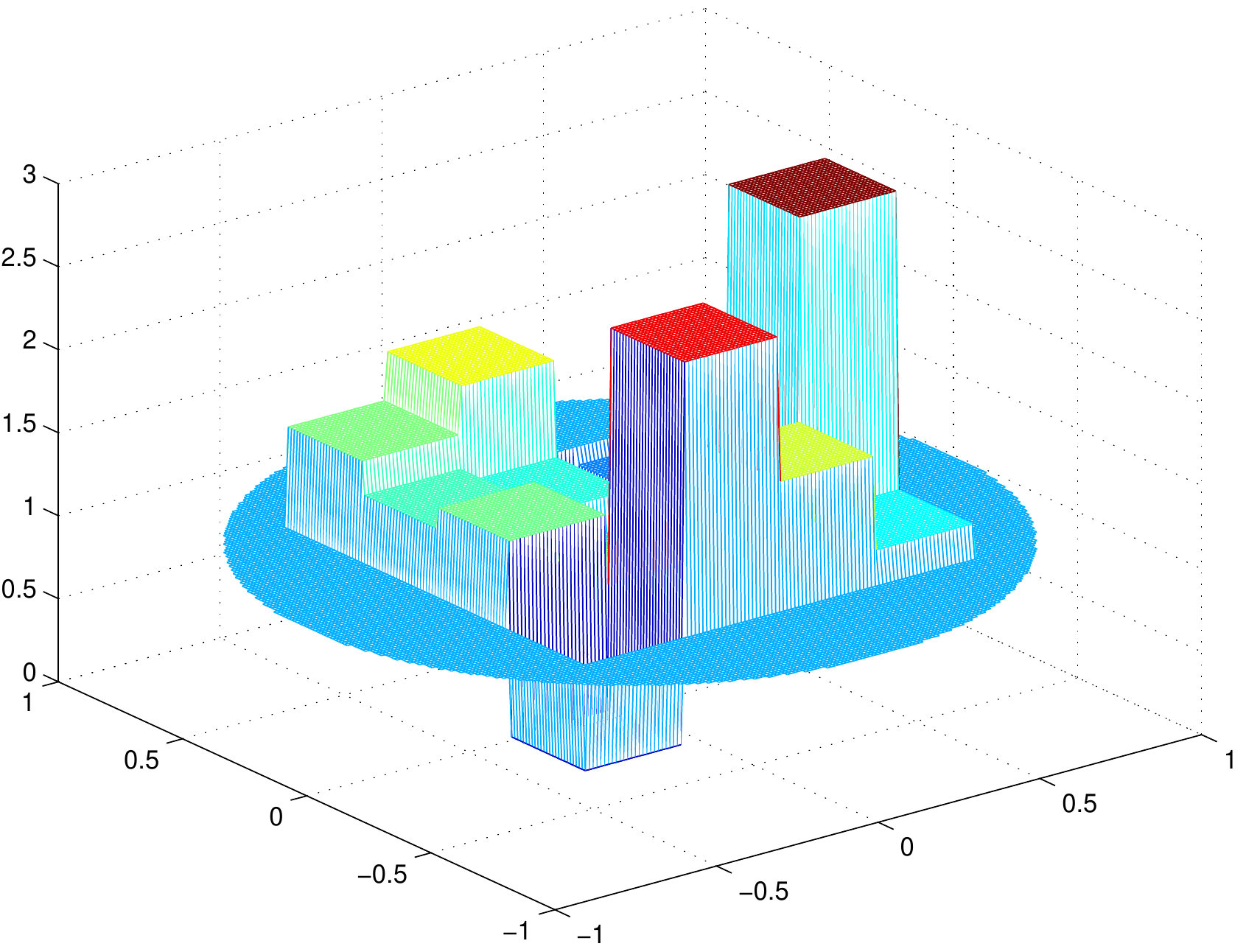}}
\put(70,60){\large c)}
\put(190,90){\includegraphics[width=5cm]{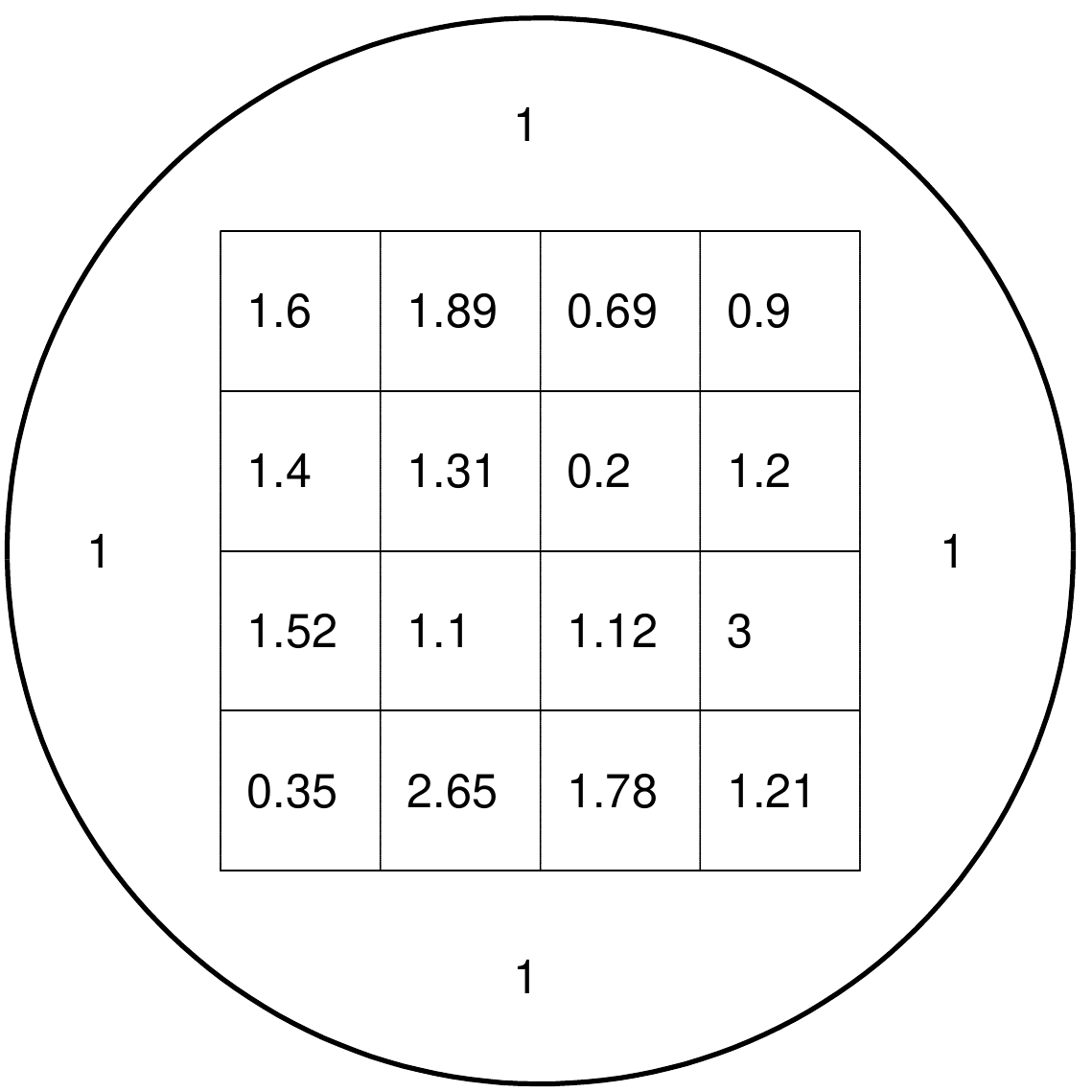}}
\put(260,60){\large d)}
\end{picture}
\caption{\label{fig:PWC}Checkerboard style conductivities $\sigma_3$, $\sigma_4$: a), c) show the picture for $\sigma_3$, $\sigma_4$, resp., using mesh Matlab function. b), d) show the values for $\sigma_3$, $\sigma_4$, resp., over each tile. }
\end{figure}

\vskip 10cm

Figures \ref{fig:nice_PWC_sigma2}, \ref{fig:nice_board_sigma2} show the approximate reconstructions for both examples and certain cut-off frequencies $\mbox{R}$ using both methods. The relative errors with sup and $l^2$ norms over the matrix of chosen points in the unit disc are pointed out. We denote such errors as follows:
\[
\text{sup} = {\norm{\sigma-\widetilde{\sigma}}_{l^{\infty}}\over \norm{\sigma}_{l^{\infty}}}\cdot 100\, \%,
\qquad \text{sqr} = {\norm{\sigma-\widetilde{\sigma}}_{l^{2}}\over \norm{\sigma}_{l^{2}}}\cdot 100 \, \%,
\]
\noindent where $\sigma$ denotes the actual conductivity and $\widetilde{\sigma}$ the approximate reconstruction.

\vskip 3mm

\underline{\emph{Low-pass transport matrix method}}

\vskip 3mm

In order to test the low-pass transport matrix method for each conductivity, we obtain the CGO solutions $f_{\mu}(z_0 , k)$ , $f_{-\mu}(z_0 , k)$ directly from the known conductivities without simulating EIT data. The point $z_0$ is outside the unit disc and we take the $k$-grid of $2^{\mbox{\tiny m}_{\mbox{\tiny k}}}\times 2^{\mbox{\tiny m}_{\mbox{\tiny k}}}$ equispaced points in the square $[-\mbox{R}, \mbox{R})^2$ with step $\mbox{h}_{\mbox{\tiny k}} = \mbox{R} / 2^{\mbox{\tiny m}_{\mbox{\tiny k}}-1}$ and cut-off frequencies $\mbox{R}\leq 50$. Next, transportation of $f_{\mu}(z_0 , k)$ , $f_{-\mu}(z_0 , k)$ to the unit disc and the final reconstruction $\sigma^{(\mbox{R})}$ are performed following the steps described in Section \ref{section:APmethod}. To this end, in addition to the $k$-grid, a grid in the $z$-variable of $2^{\mbox{\tiny m}_{\mbox{\tiny z}}}\times 2^{\mbox{\tiny m}_{\mbox{\tiny z}}}$ equidistributed points in the square $[-\mbox{s}_{\mbox{\tiny z}}, \mbox{s}_{\mbox{\tiny z}})^2$, with step $h_z = \mbox{s}_{\mbox{\tiny z}} / 2^{\mbox{\tiny m}_{\mbox{\tiny z}}-1}$, is required.

For $\sigma_3$, $\sigma_4$ we choose $z_0 = (-0.88594,-0.88594)$, $z_0 = (0, 1.2656 )$, respectively. For $\sigma_3$ we take $\mbox{m}_{\mbox{\tiny z}} = 7$, $\mbox{m}_{\mbox{\tiny k}} = 8$ and for $\sigma_4$ $\mbox{m}_{\mbox{\tiny z}} = 7$, $\mbox{m}_{\mbox{\tiny k}} = 7$. The reconstructions of $\sigma_3$ could be computed for $\mbox{R}\leq 50$ but not for $\sigma_4$ with $\mbox{R} > 20$.

\vskip 3mm

\underline{\emph{Shortcut method}}

\vskip 3mm

Concerning the shortcut method, firstly the approximate scattering transform, denote it by $\tau^{SC}_j$ for $\sigma_j$ ($j=3,4$), is computed directly from the known conductivity through the Beltrami equation solver using formula \eqref{scat_AP}. The code evaluates the approximation over the points inside the disc of radius $\mbox{R}$ (cut-off frequency) from a $k$-grid of $2^{\mbox{\tiny m}_{\mbox{\tiny k}}}\times 2^{\mbox{\tiny m}_{\mbox{\tiny k}}}$ equispaced points in the square $[-\mbox{R} , \mbox{R}]^2$. Another $z$-grid with $2^{\mbox{\tiny m}_{\mbox{\tiny z}}}\times 2^{\mbox{\tiny m}_{\mbox{\tiny z}}}$ points in $[-\mbox{s}_{\mbox{\tiny z}}, \mbox{s}_{\mbox{\tiny z}})^2$ ($\mbox{s}_{\mbox{\tiny z}}>2$) is involved here.

 To compute $\tau^{SC}_3$ we take $\mbox{R} = 50$, $\mbox{m}_{\mbox{\tiny k}} = 8$, $\mbox{m}_{\mbox{\tiny z}} = 10$, and for $\tau^{SC}_4$ we choose $\mbox{R} = 50$, $\mbox{m}_{\mbox{\tiny k}} = 7$, $\mbox{m}_{\mbox{\tiny z}} = 7$. Let us remark that this choice of $\mbox{R}$ enables reconstructions corresponding to cut-off frequencies less than or equal to $50$.

Secondly, Figures \ref{fig:APscat}, \ref{fig:Nscat} show plots of $\tau^{SC}_j$ and $\textbf{t}^{SC}_j$, where $j=3,4$ and $\textbf{t}^{SC}_j$ is computed from $\tau^{SC}_j$ by formula \eqref{transform_comparison}.

\begin{figure}[!hpb]
\begin{picture}(320,175)
\put(-40,10){\includegraphics[width=7cm]{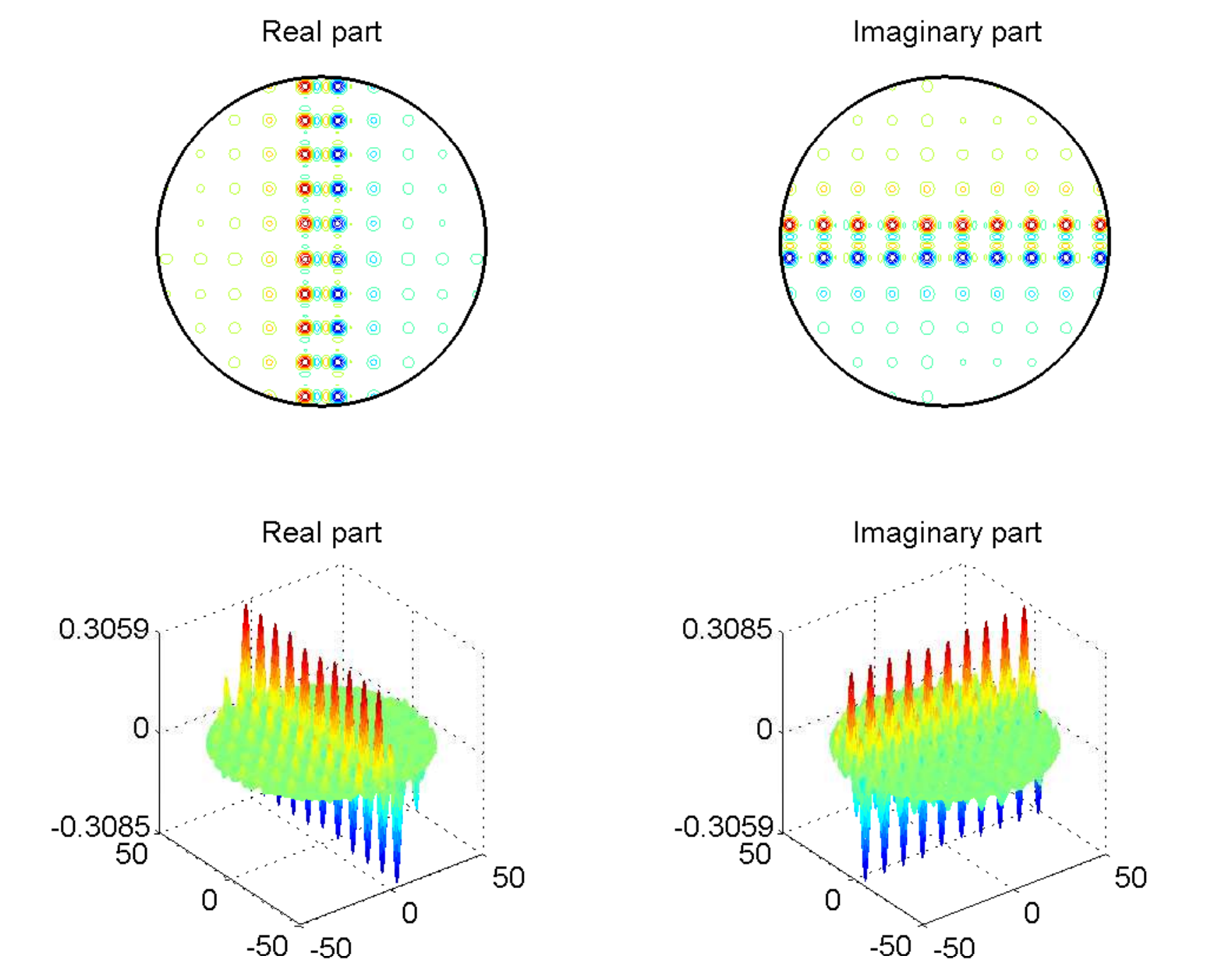}
\includegraphics[width=7cm]{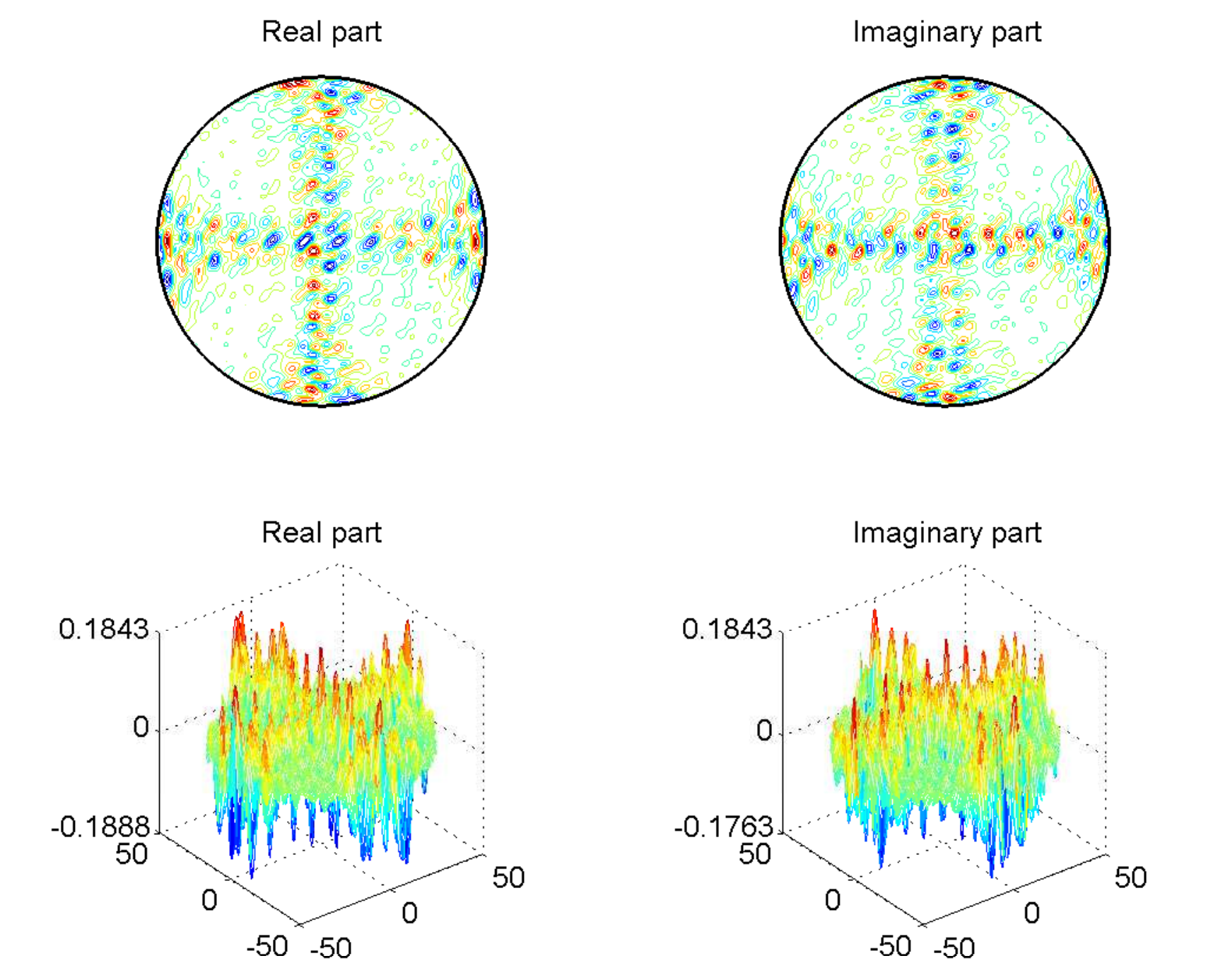}}
\put(-22,0){\footnotesize a) $\tau^{SC}_3$ for $\sigma_3$   ($\mbox{R} = 50,\, \mbox{m}_{\mbox{\tiny k}} = 8,\, \mbox{m}_{\mbox{\tiny z}} = 10$).}
\put(180,0){\footnotesize b) $\tau^{SC}_4$ for $\sigma_4$  ($\mbox{R} = 50,\, \mbox{m}_{\mbox{\tiny k}} = 7,\, \mbox{m}_{\mbox{\tiny z}} = 7$).}
\end{picture}
\caption{\label{fig:APscat}a) Plot of real (left) and imaginary (right) parts of the scattering transform $\tau^{SC}_3$ for the conductivity example $\sigma_3$. b) Idem for  $\tau^{SC}_4$ corresponding to $\sigma_4$.}
\end{figure}

\begin{figure}[!hpb]
\begin{picture}(320,175)
\put(-40,10){\includegraphics[width=7cm]{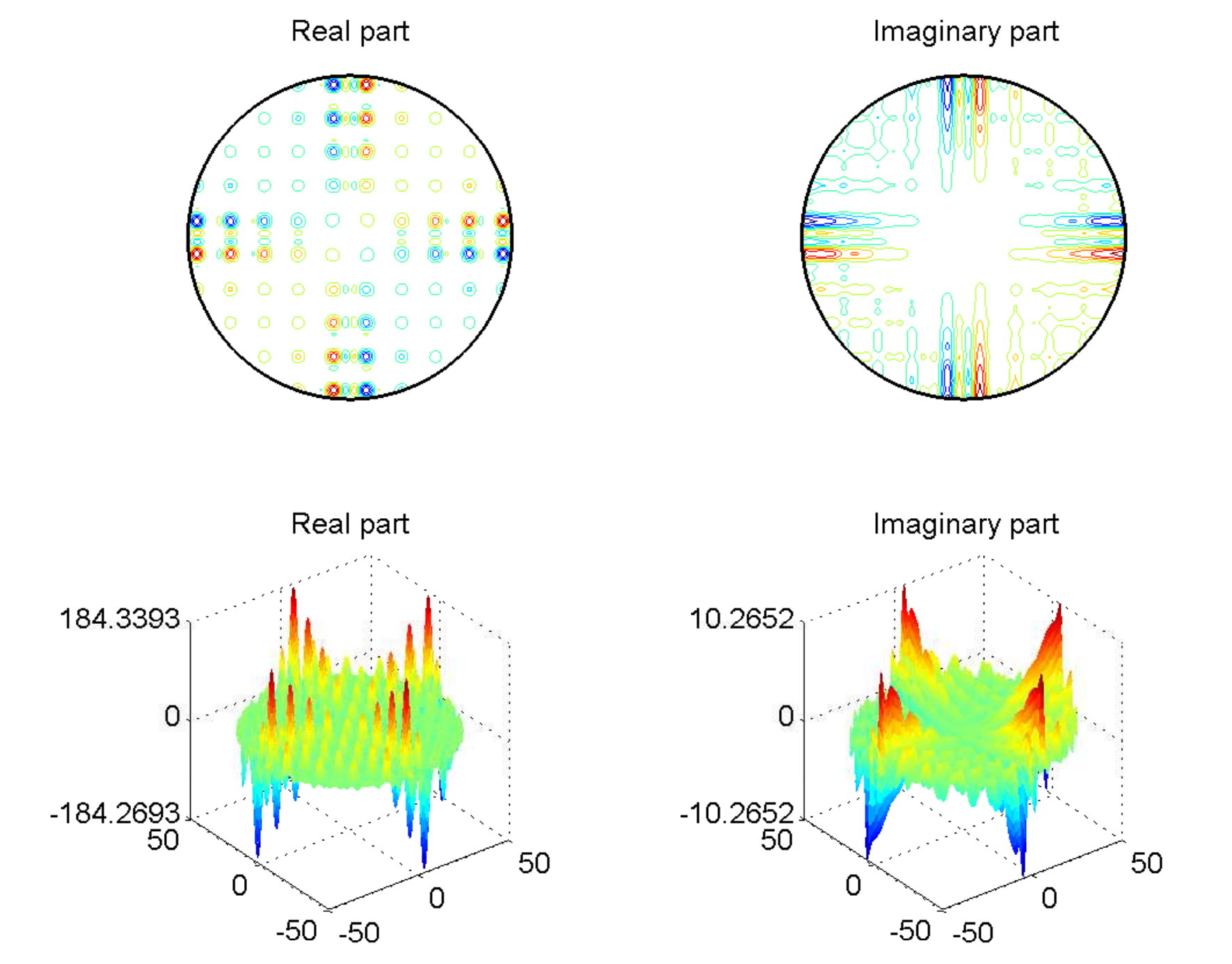}
\includegraphics[width=7cm]{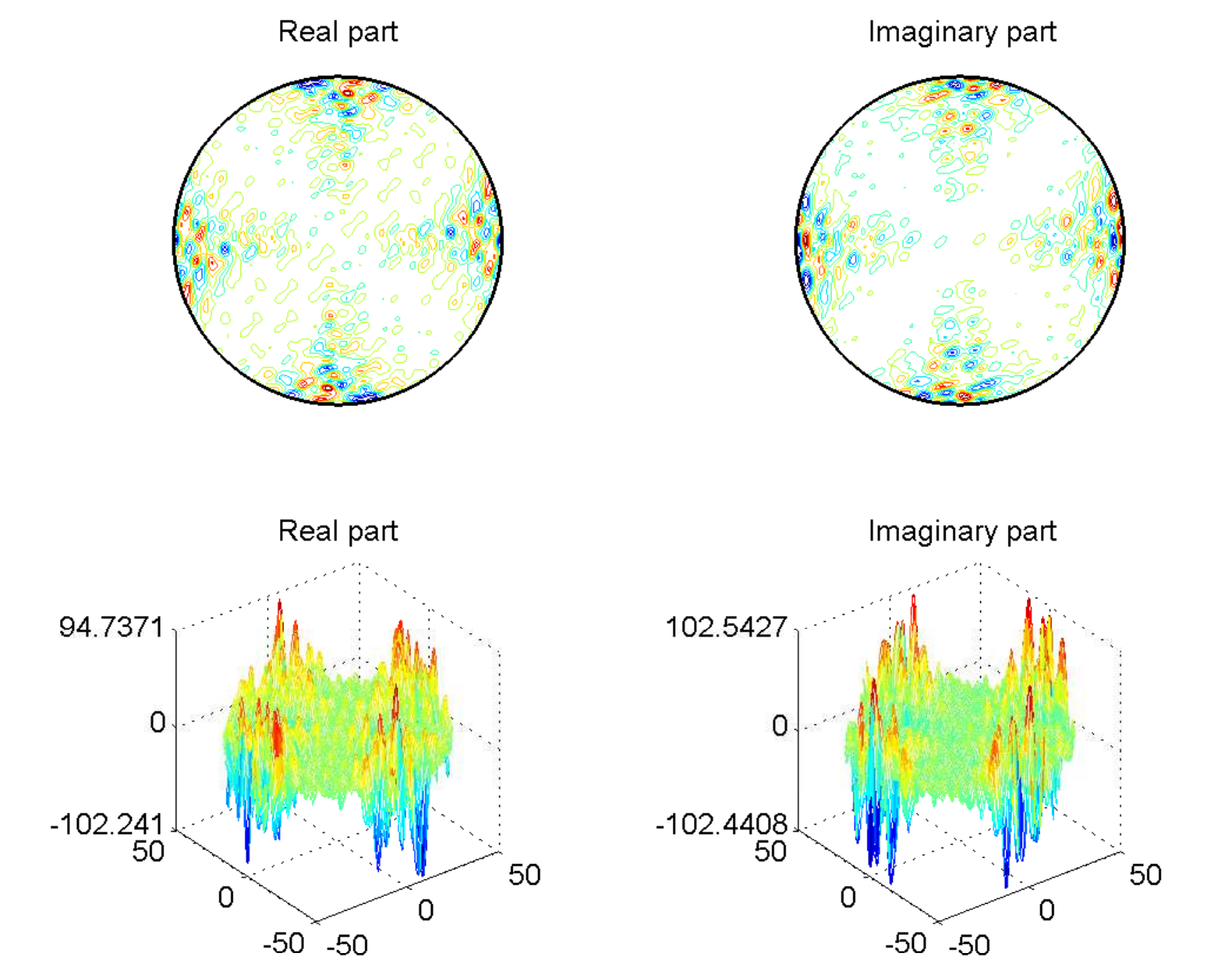}}
\put(-22,0){\footnotesize a) $\textbf{t}^{SC}_3$ for $\sigma_3$ ($\mbox{R} = 50,\, \mbox{m}_{\mbox{\tiny k}} = 8,\, \mbox{m}_{\mbox{\tiny z}} = 10$).}
\put(180,0){\footnotesize b) $\textbf{t}^{SC}_4$ for $\sigma_4$ ($\mbox{R} = 50,\, \mbox{m}_{\mbox{\tiny k}} = 7,\, \mbox{m}_{\mbox{\tiny z}} = 7$).}
\end{picture}
\caption{\label{fig:Nscat}a) Plot of real (left) and imaginary (right) parts of the scattering transform $\textbf{t}^{SC}_3$ for the conductivity example $\sigma_3$. b) Idem for  $\textbf{t}^{SC}_4$ corresponding to $\sigma_4$.}
\end{figure}

Finally, we evaluate the approximate reconstruction by solving the D-bar equation with the truncated scattering transform $\tau^{SC}_j$ as a coefficient. The reconstructions are computed over the points of the same kind of $z$-grid as above belonging to the unit disc with size parameter $\mbox{m}_{\mbox{\tiny z}}$. The code computes the solution to the D-bar equation on every point in the $z$-grid independently and uses a number of $k$-grids with size parameter $\mbox{m}_{\mbox{\tiny k}}$ and cut-off frequency $\mbox{R}$.

These reconstructions were evaluated with $\mbox{m}_{\mbox{\tiny z}} = 7$ and some $\mbox{R}$ values with $\mbox{R}\leq 50$. In addition, with respect to $\sigma_3$ we took $\mbox{m}_{\mbox{\tiny k}} = 9$ and for $\sigma_4$, $\mbox{m}_{\mbox{\tiny k}} = 7$.

\vskip 2mm

Notice that the case $\mbox{R} = 50$ was correctly computed for both conductivities $\sigma_3$, $\sigma_4$ using the shortcut method, but for $\sigma_4$ the highest $\mbox{R}$-value through the low-pass transport matrix method was $\mbox{R} = 20$.

\begin{figure}[!hbp]
\begin{picture}(250,300)
 \put(35,0){\includegraphics[width=5cm]{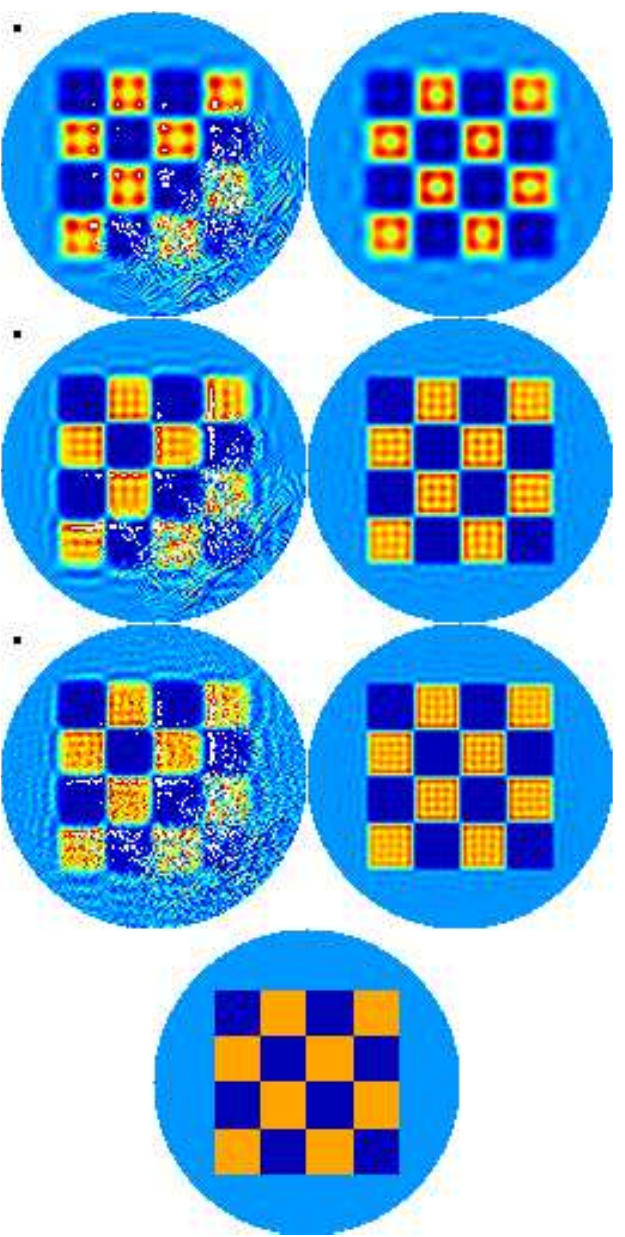}}

 \put(-3,242){\footnotesize $\text{R} = 20$}
 \put(-3,175){\footnotesize $\text{R} = 40$}
 \put(-3,102){\footnotesize $\text{R} = 50$}

 \put(50,285){\footnotesize $z_0\,$-style}
 \put(120,285){\footnotesize shortcut}

 \put(40,32){\small $\sigma_3$}
 \put(155,32){\footnotesize $z_0 \approx (-7/8,7/8)$}

 \put(180,265){\scriptsize sup$(z_0) = 8413.7$\%}
 \put(180,250){\scriptsize sqr$(z_0) = 141.8$\%}
 \put(180,235){\scriptsize sup(shct) $= 49.8$\%}
 \put(180,220){\scriptsize sqr(shct) $= 19.6$\%}

 \put(180,195){\scriptsize sup$(z_0) = 3966.6$\%}
 \put(180,180){\scriptsize sqr$(z_0) = 73.7$\%}
 \put(180,165){\scriptsize sup(shct) $= 49.4$\%}
 \put(180,150){\scriptsize sqr(shct) $= 14$\%}

 \put(180,125){\scriptsize sup$(z_0) = 6167.6$\%}
 \put(180,110){\scriptsize sqr$(z_0) = 100$\%}
 \put(180,95){\scriptsize sup(shct) $= 49.1$\%}
 \put(180,80){\scriptsize sqr(shct) $= 12.5$\%}
 \end{picture}
\caption{\label{fig:nice_PWC_sigma2}Comparison of reconstructions for $\sigma_3$ by the low-pass transport matrix method (left) and the shortcut method (right) using different cut-off frequencies $\mbox{R}$. The chosen point $z_0$ appears in black. The true conductivity is represented at the bottom row.}
\end{figure}

\begin{figure}[!hbp]
\begin{picture}(250,300)
 \put(20,-10){\includegraphics[width=6cm]{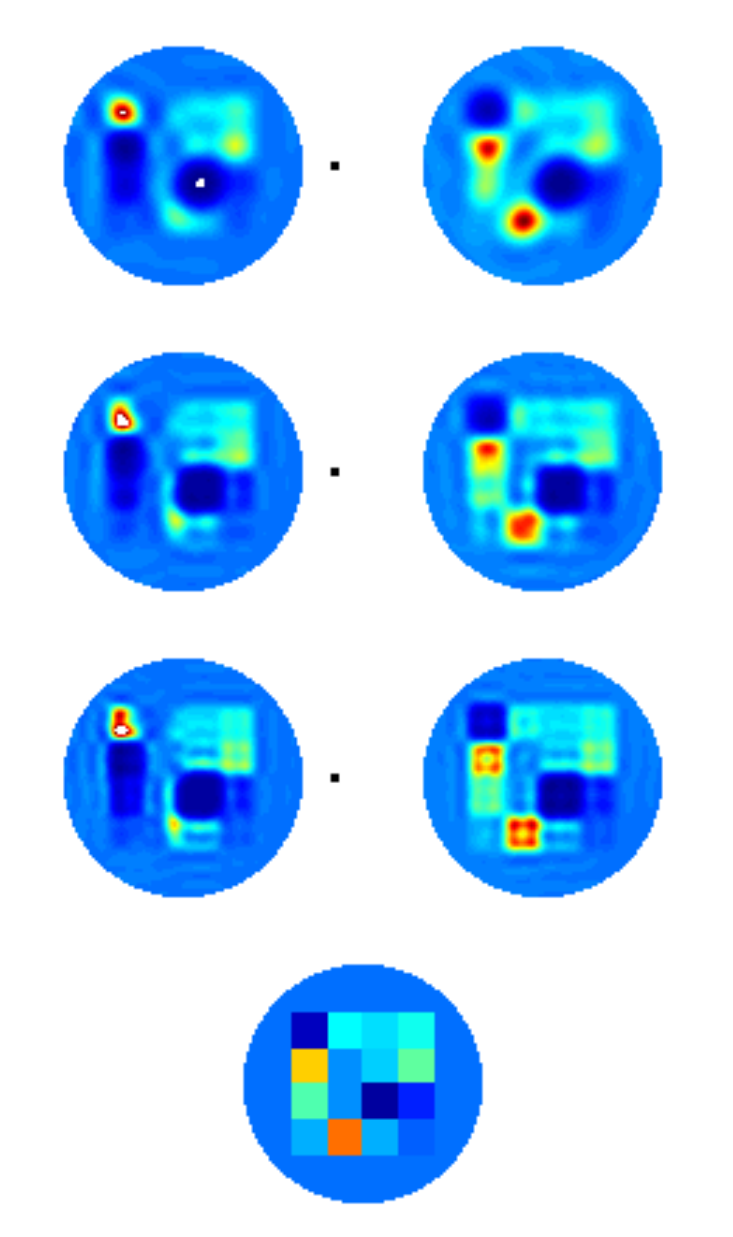}}

 \put(0,238){\footnotesize $\text{R} = 10$}
 \put(0,165){\footnotesize $\text{R} = 15$}
 \put(0,94){\footnotesize $\text{R} = 20$}

 \put(45,272){\footnotesize $z_0\,$-style}
 \put(130,272){\footnotesize shortcut}

 \put(50,22){\small $\sigma_4$}
 \put(147,22){\footnotesize $z_0 \approx (0,1.266)$}

 \put(180,254){\scriptsize sup$(z_0) = 119.1$\%}
 \put(180,243){\scriptsize sqr$(z_0) = 49.2$\%}
 \put(180,232){\scriptsize sup(shct) $= 65.9$\%}
 \put(180,221){\scriptsize sqr(shct) $= 19.6$\%}

 \put(180,183){\scriptsize sup$(z_0) = 144.6$\%}
 \put(180,173){\scriptsize sqr$(z_0) = 50.3$\%}
 \put(180,161){\scriptsize sup(shct) $= 67$\%}
 \put(180,150){\scriptsize sqr(shct) $= 17.5$\%}

 \put(180,112){\scriptsize sup$(z_0) = 154.9$\%}
 \put(180,101){\scriptsize sqr$(z_0) = 51.2$\%}
 \put(180,90){\scriptsize sup(shct) $= 69.8$\%}
 \put(180,79){\scriptsize sqr(shct) $= 15.8$\%}
 \end{picture}
\caption{\label{fig:nice_board_sigma2}Comparison of reconstructions for $\sigma_4$ by the low-pass transport matrix method (left) and the shortcut method (right) using different cut-off frequencies $\mbox{R}$. The chosen point $z_0$ appears in black. The true conductivity is represented at the bottom row.}
\end{figure}

\clearpage

\subsection{Numerical evidence of the transport matrix efficiency}

In addition, further numerical experiments were made aimed at
comparing the actual complex geometric optics solution to the
Beltrami equation
\[
\dbar_z f_{\mu} (z,k_0) = \mu(z)\, \overline{\partial_z f_{\mu}
(z,k_0) },\qquad\text{for }z\in\Omega,\qquad k_0 = 1,
\]
and the transported solution $\widetilde{f}_{\mu}(z,k_0)$ to the
unit disc from $f_{\mu} (z_0,k_0)$ and $f_{-\mu} (z_0,k_0)$ at
certain $z_0\in\C\setminus \overline{\Omega}$.

Figure \ref{fig:cmp_sigma4_20} shows some pictures for $f_{\mu}
(z,k_0)$ and $\widetilde{f}_{\mu} (z,k_0)$ corresponding to the
conductivity example $\sigma_4$. Here $|z|<1$, $k_0 \approx 1$ and
the pivot point outside the unit disc is $z_0\approx (0,1.266)$.
To compute $\widetilde{f}_{\mu}$ the transport matrix was
generated with a cut-off frequency $\mbox{R} = 20$ and we took the
size parameters as follows: $\mbox{m}_{\mbox{\tiny k}} = 7$,
$\mbox{m}_{\mbox{\tiny z}} = 7$, $\mbox{s}_{\mbox{\tiny z}} =
1.5$.

The relative errors with sup and $l^2$ norms
\[
\mbox{sup} := {\norm{f_{\mu} (\cdot,k_0)-\widetilde{f}_{\mu}
(\cdot,k_0)}_{l^{\infty}}\over \norm{f_{\mu}
(\cdot,k_0)}_{l^{\infty}}}\cdot 100\, \%, \,\,\, \mbox{sqr} :=
{\norm{f_{\mu} (\cdot,k_0) - \widetilde{f}_{\mu} (\cdot,k_0)
}_{l^{2}}\over \norm{f_{\mu} (\cdot,k_0)}_{l^{2}}}\cdot 100 \, \%
\]
are
\begin{equation}\label{form:errors}
\mbox{sup} = 20.23\%,\qquad \mbox{sqr}= 10.28\%,
\end{equation}
respectively.

\clearpage

\begin{figure}[!hbp]
\begin{picture}(320,270)
\put(-20,150){\includegraphics[width=6cm]{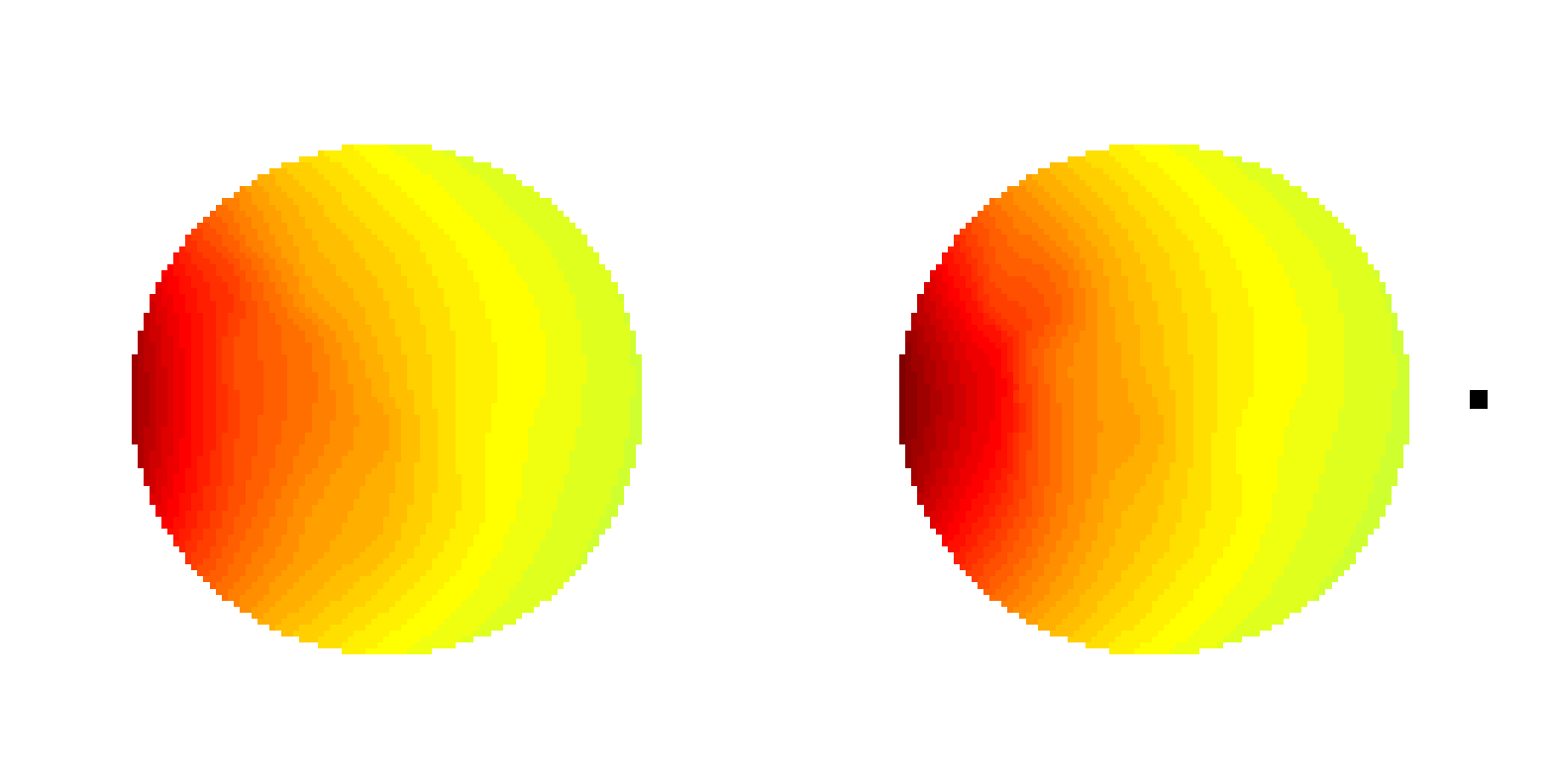}}
\put(170,150){\includegraphics[width=6cm]{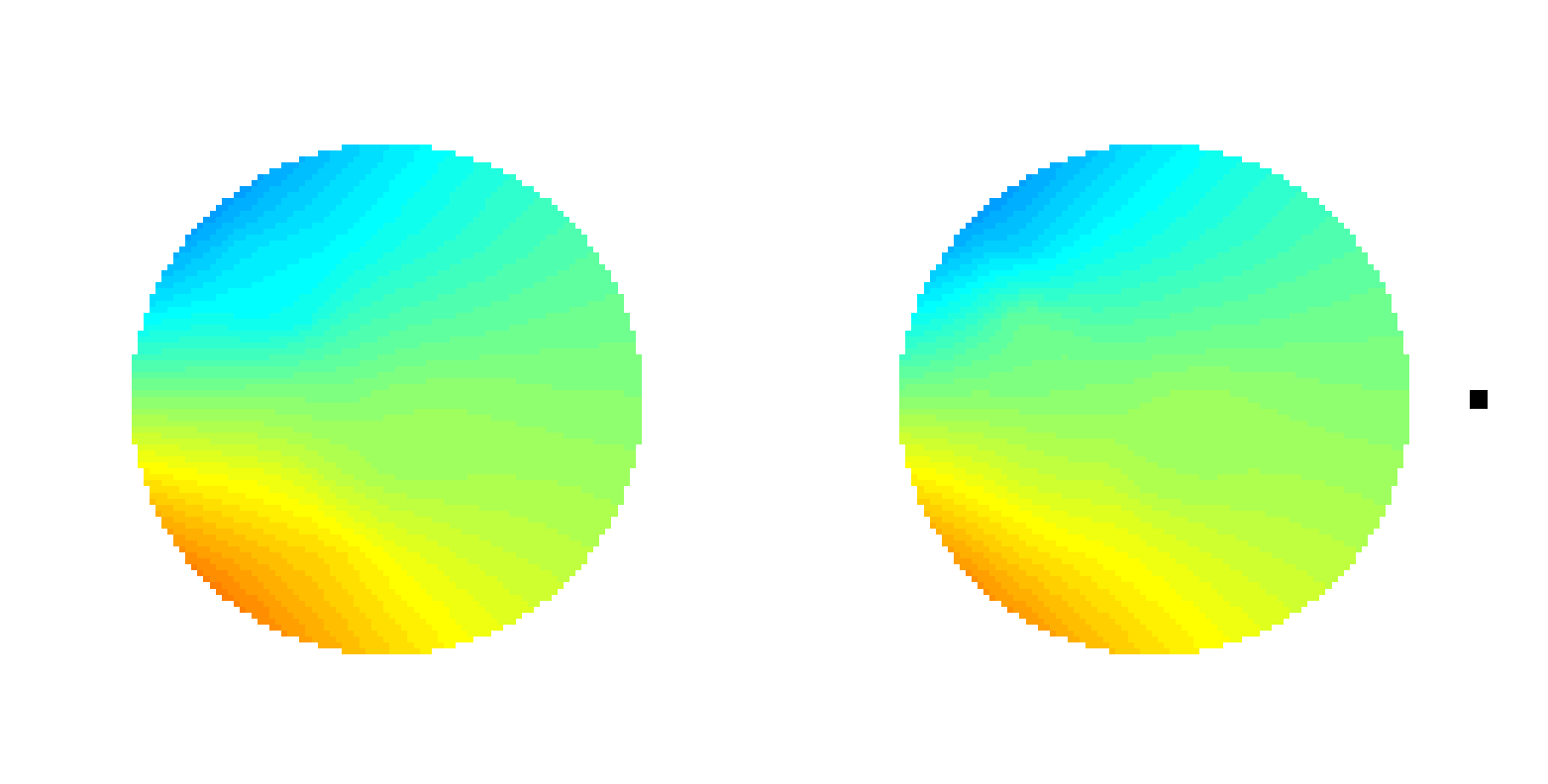}}
\put(75,2){\includegraphics[width=6cm]{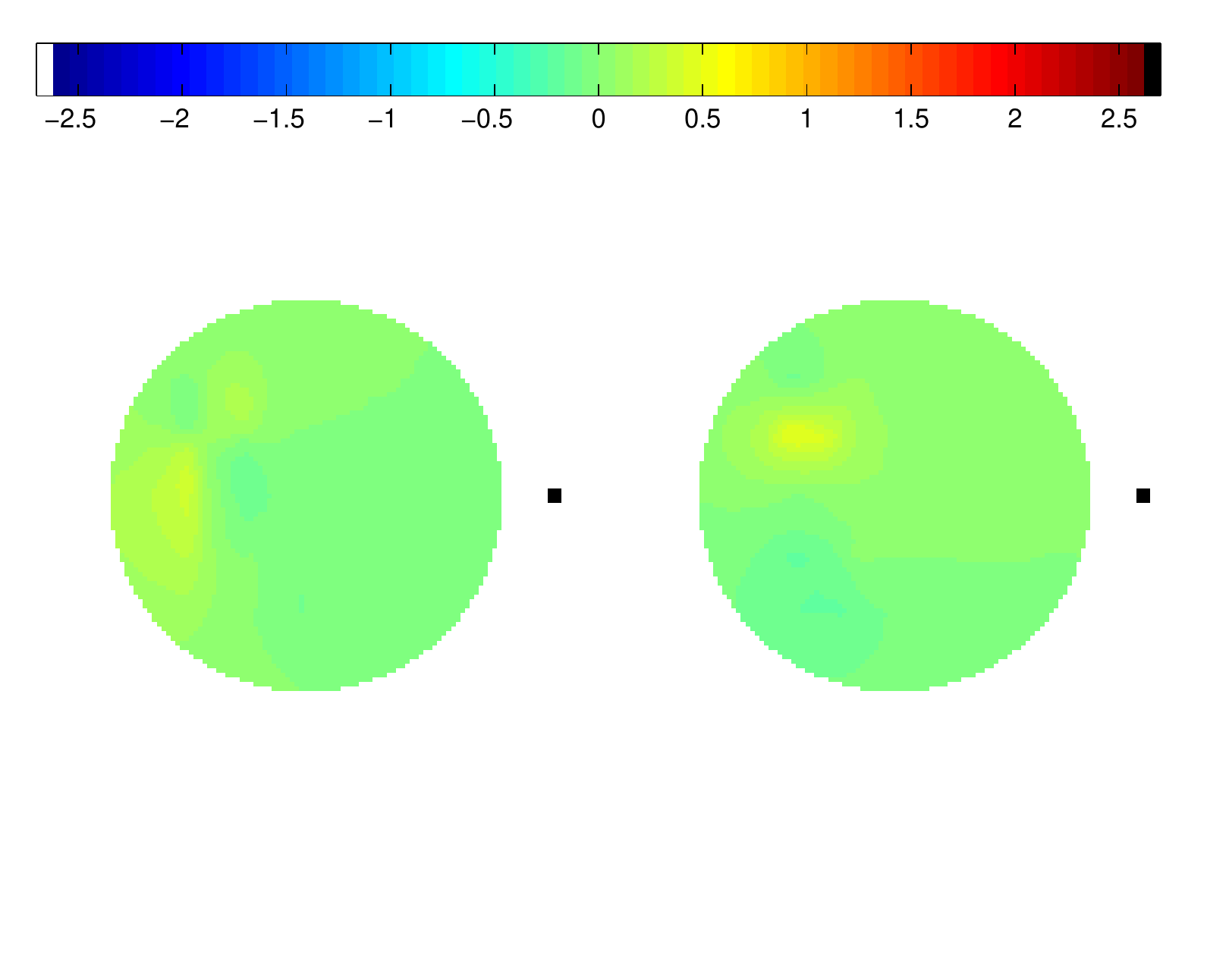}}

\put(-25,190){\footnotesize a)} \put(165,190){\footnotesize b)}
\put(70,65){\footnotesize c)}

\put(40,240){\normalsize Real part} \put(210,240){\normalsize
Imaginary part}

\end{picture}
\caption{\label{fig:cmp_sigma4_20}Comparison of $f_{\mu} (z,k_0)$
and $\widetilde{f}_{\mu} (z,k_0)$ for the conductivity $\sigma_4$
with $\mbox{R} = 20$. In the first row we show real and imaginary
parts of both functions. In a) the real part of the actual
solution $f_{\mu} (z,k_0)$ is depicted on the left and the real
part of the transported $\widetilde{f}_{\mu} (z,k_0)$ on the right
(where the point $z_0$ is represented in black). In b) the same
information is showed for the imaginary part of $f_{\mu} (z,k_0)$
and $\widetilde{f}_{\mu} (z,k_0)$. The last row c) shows the
difference $\widetilde{f}_{\mu} (z,k_0)-f_{\mu} (z,k_0)$. The real
part is exhibited on the left and the imaginary part on the right.
Again, the pivot point $z_0$ appears in black. All the pictures
were generated under the same colormap using Matlab.}
\end{figure}


\section{Conclusion}


Regarding the radially symmetric examples studied in Section
\ref{section:symm_examples}, the conclusion is clear: numerical
evidence suggests that discontinuous conductivities can be
reconstructed with the shortcut method more and more accurately
when $\mbox{R}$ tends to infinity.

The errors in the reconstructions are very similar to the Gibbs
phenomenon observed with truncated linear Fourier transforms.

The numerical study of the nonsymmetric examples comparing both reconstruction types leads us to two observations:

- For these discontinuous conductivity examples, the shortcut method generates considerably better reconstructions than the transport matrix method.

- The approximate solution computed by the transport matrix method is reliable on an area within the unit disc close enough to the selected $z_0$ point outside the unit disc.

Finally, since errors \eqref{form:errors} are small and plots for
both functions $f_{\mu} (z,k_0)$, $\widetilde{f}_{\mu} (z,k_0)$
are similar in view of Figure \ref{fig:cmp_sigma4_20}, we have
numerical evidence supporting the fact that the transport matrix
method ``transports well'' and the worse results by the transport
matrix method are explained by the final algebraic steps,
including numerical differentiation in \eqref{finalstep1}, just
after the computation of the solutions $u^{(R)}_1(z,k_0)$,
$u^{(R)}_2(z,k_0)$ in \eqref{transtrunc} through the transport
matrix itself.


\appendix

\section*{Appendix}

\noindent This Appendix is aimed at presenting two final
discussions as follows. On a hand, the noisy scattering transform
referred to in Section \ref{sec:shortcutm} is compared with the
free-noise scattering transform computed by solving the Beltrami
equation and formula \eqref{scat_AP} for the checkerboard-style
conductivity phantom $\sigma_3$ from Figure \ref{fig:PWC}. On the
other hand, a quantitative discussion on the precision required to
the DN map to obtain the accuracy of the scattering transform on
the disc of radius $50$ is introduced.

\vskip 5mm

\emph{Example of noisy scattering transforms}

For simplicity of notation, let us write $\tau_p$ for the
scattering data obtained on the disc centered at the origin of
radius $6$ by solving the boundary integral equation mentioned in
Section \ref{section:APmethod} from a simulated
Dirichlet-to-Neumann map, $\Lambda^p$, with $p\,\%$ noise.
Additionally, denote by $\tau$ the aforementioned free-noise
scattering transform on the same disc (denoted by $\tau^{SC}_3$ in
Section \ref{sec:comparison}).

The approximate Dirichlet-to-Neumann map $\Lambda^p$ is computed
adding $p\%$ Gaussian noise to the FEM boundary voltages (for
$\sigma_3$) as it is explained in Section 5.3 of
\cite{Hamilton2014}.

The $k$-grid consists of $2^{\mbox{\tiny m}_{\mbox{\tiny
k}}}\times 2^{\mbox{\tiny m}_{\mbox{\tiny k}}}$ equispaced points
in the square $[-6 , 6]^2$ with $\mbox{m}_{\mbox{k}} = 7$. The
scattering data are computed on the $12,851$ points of such grid
belonging to the disc of radius $6$. For the grid in the
$z$-variable used to compute $\tau$, $2^{\mbox{\tiny
m}_{\mbox{\tiny z}}}\times 2^{\mbox{\tiny m}_{\mbox{\tiny z}}}$
equidistributed points are taken in the square
$[-\mbox{s}_{\mbox{\tiny z}}, \mbox{s}_{\mbox{\tiny z}})^2$, with
step $h_z = \mbox{s}_{\mbox{\tiny z}} / 2^{\mbox{\tiny
m}_{\mbox{\tiny z}}-1}$ and $\mbox{s}_{\mbox{\tiny z}}=2.3$.

For different choices of $p$ the following absolute error is
computed for $r$ varying over a partition of the interval $(0,6]$:
$$
E_p(r):=\norm{\chi_{D(0,r)} (\tau-\tau_p)}_{L^{\infty}}\, ,
$$
where $\chi_{D(0,r)}$ denotes the characteristic function of the
disc centered at the origin and radius $r$.

In Figure \ref{fig:Ep(r)} the profiles of $E_p(r)$  for $p =
0.1\%$, $0.5\%$, $1\%$, $5\%$ are shown in black, blue, green and
red colour, respectively. The maximum value shown on the vertical
axis is $1$.

\begin{figure}[!htp]
\begin{picture}(300,210)
\put(30,-10){\includegraphics[keepaspectratio=true, height =
7cm]{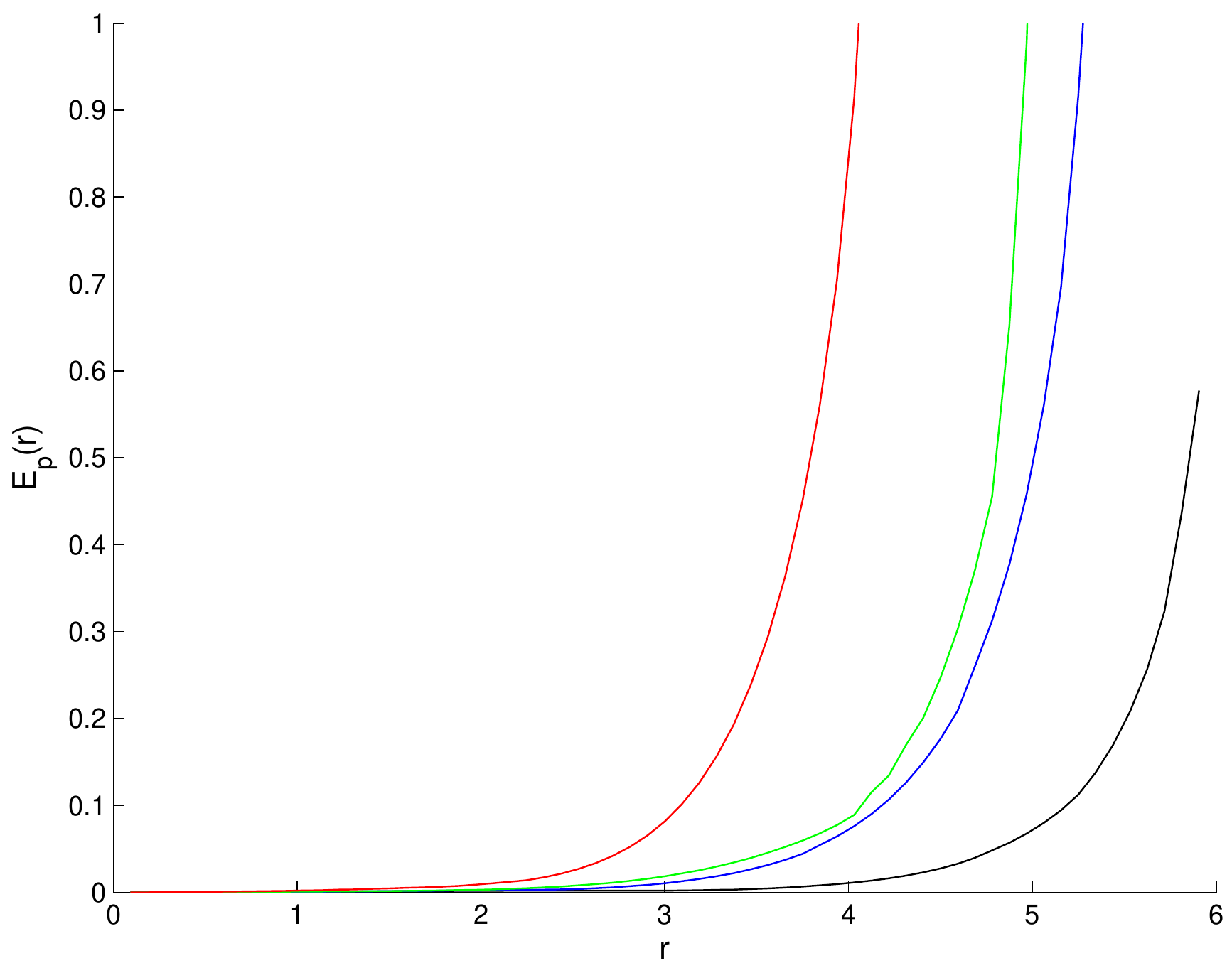}}
\end{picture}
\caption{\label{fig:Ep(r)}Profiles of the values of $E_p(r)$ in
$(0,1)$ with $r\in (0,6)$, for $p = 0.1\,\%$ (black), $p=0.5\,\%$
(blue), $p=1\,\%$ (green), $p=5\,\%$ (red).}
\end{figure}

\emph{Accuracy of the DN map to compute $\T(k)$ on $D(0,50)$}

How many correct significant digits would one need in the DN map
for computing the scattering transform  $\T(k)$ for $|k|<50$? It
is well-known \cite{Alessandrini1988} that there is a logarithmic
relation between measurement accuracy and details in conductivity.
In other words, EIT is an exponentially ill-posed inverse problem.
In terms of scattering transforms, a fixed noise level $p$\% in
the EIT measurement leads to, roughly speaking, the computation of
$\T(k)$ to be stable and accurate in a disc $|k|<R(p)$ and
unstable outside that disc. See \cite[Figure 2]{Knudsen2009}. The
exponential ill-posedness shows up as a logarithmic dependence of
$R$ on $p$, see \cite[Figure 3]{Knudsen2009}. Improving the
measurement accuracy so that the DN matrix entries have one more
significant digit increases the stability radius from $R$ to
$R+\Gamma$ with $\Gamma>0$ a fixed real number.

Let us make a crude quantitative estimation based on \cite[Figure
3]{Knudsen2009}. Two significant digits in the DN map corresponds
to $R=3$ and six significant digits to $R=7$. A quick computation
shows that achieving the stability radius $R=50$ would require 49
correct significant digits in the elements of the DN matrix (and,
actually, using a larger matrix with more oscillatory basis
functions involved as well).

\begin{acknowledgements} This work was supported by the Finnish Centre of Excellence in Inverse Problems Research $2012$-$2017$
(Academy of Finland CoE-project $250215$). In addition, KA was supported by Academy of Finland, projects $75166001$ and $1134757$,
and LP was supported by ERC-$2010$ Adv. Grant, $267700$ - InvProb. The third author was also partially supported by Academy of Finland
 (Decision number $141075$) and Ministerio de Ciencia y Tecnolog\'ia de Espa\~na (project MTM
$2011$-$02568$).
\end{acknowledgements}


\bibliographystyle{siam}
\bibliography{Inverse_problems_references}

\end{document}